\definecolor{darkred}{rgb}{0.6,0,0.1}
\definecolor{darkgreen}{rgb}{0,0.5,0}
\definecolor{darkblue}{rgb}{0,0,0.5}
\renewcommand{\cite}{\citet}
\definecolor{dgreen}{rgb}{0,0.5,0}
\definecolor{dblue}{rgb}{0,0,0.5}
\definecolor{dred}{rgb}{0.6,0.0,0.1}
\definecolor{dgold}{rgb}{0.5,0.3,0.0}
\definecolor{dvio}{rgb}{0.6,0.3,0.5}
\definecolor{gray}{rgb}{0.5,0.5,0.5}
\definecolor{dbraun}{rgb}{.5,0.2,0}
\newcommand{\dgrau}{\color{gray}}
\newcommand{\colre}{dred}
\newcommand{\colas}{dblue}
\newcommand{\colrem}{dgold}
\newcommand{\colil}{dgreen}
\newtheoremstyle{styre}
  {1.1\topsep}
  {\topsep}
  {\normalfont\itshape}
  {}
  {\color{\colre}}
  {.}
  {.5em}
  {\thmname{\textbf{#1}\xspace}{\xspace\dgrau\thmnumber{#2}}\thmnote{\xspace\textit{\small(#3)}}}
\newtheoremstyle{styas}
  {1.1\topsep}
  {\topsep}
  {\normalfont\itshape}
  {}
  {\color{\colas}}
  {.}
  {.5em}
  {}
\newtheoremstyle{styrem}
  {1.1\topsep}
  {\topsep}
  {\normalfont\itshape}
  {}
  {\color{\colrem}}
  {.}
  {.5em}
  {\thmname{\textbf{#1}\xspace}{\xspace\dgrau\thmnumber{#2}}\thmnote{\xspace\textit{\small(#3)}}}
\newtheoremstyle{styil}
  {1.1\topsep}
  {\topsep}
  {\normalfont\rmfamily}
  {}
  {\color{\colil}}
  {.}
  {.5em}
  {\thmname{\textbf{#1}\xspace}{\xspace\dgrau\thmnumber{#2}}\thmnote{\xspace\textit{\small(#3)}}}
\newtheoremstyle{stypro}%
	{0.5\topsep}
	{1.1\topsep}
	{\upshape}
	{}
	{}
	{.}
	{.5em}
	{\thmnote{\textit{#3}}}
\theoremstyle{styre}\newtheorem{pr}{Proposition}[section]
\newaliascnt{co}{pr}
\theoremstyle{styre}\newtheorem{co}[co]{Corollary}
\newaliascnt{thm}{pr}
\theoremstyle{styre}
\newaliascnt{lem}{pr}
\theoremstyle{styre}\newtheorem{lem}[lem]{Lemma}
\newaliascnt{rem}{pr}
\theoremstyle{styrem}
\newaliascnt{il}{pr}
\theoremstyle{styil}\newtheorem{il}[il]{Illustration}
\theoremstyle{styas}\newtheorem{ass}{Assumption}
\theoremstyle{styas}
\theoremstyle{stypro}
\crefname{pr}{\color{\colre}Proposition}{\color{\colre}Propositions}
\crefname{co}{\color{\colre}Corollary}{\color{\colre}Corollaries}
\crefname{thm}{\color{\colre}Theorem}{\color{\colre}Theorems}
\crefname{lem}{\color{\colre}Lemma}{\color{\colre}Lemmata}
\crefname{ass}{\color{\colas}Assumption}{\color{\colas}Assumptions}
\crefname{de}{\color{\colas}Definition}{\color{\colas}Definitions}
\crefname{rem}{\color{\colrem}Remark}{\color{\colrm}Remarks}
\crefname{il}{\color{\colil}Illustration}{\color{\colil}Illustrations}
\crefname{table}{Table}{Tables}
\crefname{section}{Section}{Sections}
\numberwithin{equation}{section} 
\newcommand{\mylabel}[2]{#2\def\@currentlabel{#2}\label{#1}}
\def\@fnsymbol#1{\ensuremath{\ifcase#1\or * \or ** \or 2 \or 3 \or  *\or  \star \or 4\or  , \or 
g\or h\or i\else\@ctrerr\fi}}%
\author{Jan Johannes$\;^{a}$ and Bianca Neubert$\;^{a*}$  \\ \small $\;^{a}$
Institut f\"{u}r Mathematik (IMa), Interdisciplinary Center for Scientific Computing (IWR),\\ \small Ruprecht-Karls-Universit\"{a}t Heidelberg, Germany; $\;^{*}$neubert@math.uni-heidelberg.de}
\date{} 
\title{\textbf{Goodness-of-fit testing from observations with multiplicative measurement error}} 
\begin{document} 
\renewcommand{\abstractname}{\vspace{-\baselineskip}}

\maketitle
 
\begin{abstract}
Given observations from a positive random variable contaminated by multiplicative measurement error, we consider a nonparametric goodness-of-fit testing task for its unknown density in a non-asymptotic framework. We propose a testing procedure based on estimating a quadratic functional of the Mellin transform of the unknown density and the null. We derive non-asymptotic testing radii and testing rates over Mellin–Sobolev spaces, which naturally characterize regularity and ill-posedness in this model. By employing a multiple testing procedure with Bonferroni correction, we obtain data-driven procedures and analyze their performance. Compared with the non-adaptive tests, their testing radii deteriorate by at most a logarithmic factor. We illustrate the testing procedures with a simulation study using various choices of densities.
\end{abstract} 
{\footnotesize
\begin{tabbing} 
\noindent \emph{Keywords:} \= Non-asymptotic radius of testing, Density function, Survival function, Mellin-transform,   \\ \> Bonferroni aggregation\\[.2ex] 
\noindent\emph{2010 Mathematics Subject Classifications:} Primary 62G10; secondary 62C20. 
\end{tabbing}}

\section{Introduction}
In this work, we consider nonparametric data-driven goodness-of-fit hypothesis testing for the density of a strictly positive random variable $X$ in a multiplicative measurement error model. More precisely, let $X$ and $U$ be strictly positive independent random variables admitting, respectively, an unknown density $\denX$ and a known density $\denU$ with respect to the Lebesgue measure $\pLm$ on the positive real line $\pRz$.  We have access to an independent and identically distributed (i.i.d.) sample of size $n\in\Nz$ from $Y = X U$  admitting a density $\denY$ given for $y\in\pRz$  by 
 \begin{align}\label{eq::multiplicative-convolution2}
 	\denY (y) = (\denX \mc \denU)(y):= \int_{\pRz}  \denX(x) \denU(y/x)x^{-1}d\pLm (x).
 \end{align}
For a prescribed density $\denNX$ we  propose a data-driven testing procedure based only on  observations  of $Y$ for the goodness-fit testing task
 \begin{align*}
    	H_0\colon \denX = \denNX  \quad\text{ against }\quad H_1 \colon \denX \not= \denNX,
 \end{align*}
which naturally leads to a statistical inverse problem called multiplicative deconvolution.

\cite{Vardi1989} and \cite{VardiZhang1992} introduce and thoroughly investigate multiplicative censoring, which corresponds to the specific multiplicative deconvolution model where the multiplicative error $U$ is uniformly distributed on $[0,1]$. As explained and motivated in \cite{VANES2000295}, multiplicative censoring represents a common challenge in survival analysis. 
There is a substantial body of work on estimation of the unknown density $\denX$. Under multiplicative censoring, for example,  \cite{Andersen2001} employ series expansion methods; \cite{brunel2016} propose a kernel estimator; \cite{Comte2016} develop a projection estimator based on the Laguerre basis; and \cite{Belomestny2016} study the case of a Beta-distributed error. All of these approaches fall within the broader framework of the multiplicative measurement error model.
Nonparametric density estimation in this setting has been investigated by \cite{Brenner2021}, who assume a known error density and apply spectral cut-off regularization, and by \cite{Siebel2023}, who address the case of an unknown error density. Further contributions include  \cite{Miguel2021}, who consider estimation of a linear functional of the unknown density $\denX$, and  \cite{Belomnestny2020}, who analyze pointwise density estimation. Pointwise estimation of the cumulative distribution function, as studied in \cite{belomestny2024}, also covers scenarios where absolute continuity does not hold.
The estimation of a quadratic functional of the unknown density has been studied in \cite{Comte2025}. The central tool for analysing multiplicative deconvolution is the multiplication theorem. For a density $\denY = \denX *\denU$ and the corresponding Mellin transformations $\Mcg{c}{\denY}, \Mcg{c}{\denX}, \Mcg{c}{\denU}$ (defined below) the theorem states that
$\Mcg{c}{\denY} =\Mcg{c}{\denX} \Mcg{c}{\denU}$. This property is exploited in \cite{Belomnestny2020}  to construct a kernel density estimator of $\denX$ while \cite{Miguel2021} and \cite{Comte2025} employ it to develop a plug-in estimator.

We now turn to the testing task.  Asymptotic rates of testing for nonparametric alternatives is introduced in the series of papers by \cite{MR1257983,MR1259685,MR1257978}. For a fixed sample size, a nonasymptotic radius of testing has been studied by \cite{zbMATH01870137}, \cite{Laurent_2012}, \cite{Collier_2017} and \cite{Marteau_2017} amongst others.  Its connection to the asymptotic approach is studied in \cite{MarteauSapatinas2015}. 
The link between quadratic functional estimation and testing is amongst others explored in  \cite{Collier_2017}, \cite{Kroll2019} and  \cite{SchluttenhoferJohannes2020b,SchluttenhoferJohannes2020}. Estimation of quadratic functionals of densities have extensively been treated in the literature by \cite{LaurentMassart2000}, \cite{Laurent2005} and \cite{BUTUCEA201131}, to name but a few. In particular we refer to  \cite{Butucea2007}, \cite{SchluttenhoferJohannes2020b} and \cite{Comte2025} for measurement models with additive real, circular and multiplicative errors, respectively. Goodness-of-fit testing for additive real measurement error models has been studied for example  by \cite{Butucea2007} while \cite{SchluttenhoferJohannes2020} consider circular errors. In this paper, we study the multiplicative case. We propose a test statistic by replacing the unknown Mellin transform with its empirical counterpart, borrowing ideas from \cite{Comte2025}.  More precisely, to distinguish between the null hypothesis and the alternative, we consider a weighted $\Lp[2]{}$-distance between the Mellin transforms of $\denX$ and $\denNX$   allowing to study goodness-of-fit hypotheses on the density $\denX$, its derivatives or the associated survival function. Considering this general framework, we derive upper bounds for the radius of testing.

The accuracy of the proposed test statistic relies crucially on an optimal choice of a tuning parameter which is unknown in practice.  
In the literature, data-driven testing strategies have been studied in both asymptotic and non-asymptotic frameworks (e.g., \cite{Spokoiny_1996},  \cite{Baraud_2003}, and \cite{Fromont_2006}). In particular, we refer to \cite{ButuceaMatiasPouet2009} and \cite{Butucea2007} for additive real measurement error models, and to \cite{SchluttenhoferJohannes2020} for the circular case. Our aim is to derive in a multiplicative measurement error model upper bounds for the radius of testing of a fully data-driven procedure using a Bonferroni approach. 

The paper is organized as follows.
We first review the Mellin transform, present, the testing task, and propose  the test statistic in \cref{sec::dn}. In \cref{sec::bounds-quantiles}, we derive bounds for the quantiles of the proposed test statistic. In \cref{sec::uperbound-radius-testing}, we establish upper bounds for the radius of testing. In \cref{sec::radius-max-test}, we propose a data-driven testing procedure based on the max-test and also provide upper bounds for its radius of testing. Finally, in \cref{sec::sim-testing}, we illustrate the theoretical results with a small simulation study.
\section{Methodology}\label{sec::dn}
Before stating the testing task and defining a test statistic, we introduce the Mellin transform and recall the multiplicative deconvolution problem.
\paragraph{Mellin transform}
Consider on the Borel-measurable space $(\pRz, \psB)$ the restriction $\pLm$ of the Lebesgue-measure $\Lm$ on $\pRz$. For any $c\in\Rz$ denote by  $\basMSy{c}\pLm$ the $\sigma$-finite measure with Lebesgue-density $\basMSy{c}$ given by $x\mapsto x^c$. Introduce for $s\in\Rz_{\geq1}$ the set $\Lp[s]{}(\basMSy{c}) := \Lp[s]{+}(\pRz, \sB^+, \basMSy{c}\pLm)$ of all complex-valued $\Lp[s]{+}(\basMSy{c}) $-integrable functions. Analogously, denote by $\Lp[2]{} := \Lp[2]{}(\Rz, \sB, \Lm)$ the set of all complex-valued square-Lebesgue-integrable functions. Then,  the Mellin transform is the operator ${\Mg{c}\in \mathbb{L}(\Lp[2]{+}(\basMSy{2c-1}), \Lp[2]{})}$   satisfying for $g\in \Lp[1]{+}(\basMSy{c-1})\cap \Lp[2]{+}(\basMSy{2c-1})$ and  $t\in\Rz$
\begin{align*}
	\Mcg{c}{g}(t) = \int_{\pRz} x^{c-1+  2\pi\iota t} g(x) \pLm(dx) = \int_{\pRz} x^{2 \pi \iota t} g(x) (\basMSy{c-1}\pLm)(x).
\end{align*} 
Moreover,  it fulfils  $\langle h_1, h_2 \rangle_{\Lp[2]{+}(\basMSy{2c-1})} = \langle \Mcg{c}{h_1}, \Mcg{c}{h_2} \rangle_{\Lp[2]{}}$ for all $h_1,h_2\in\Lp[2]{+}(\basMSy{2c-1})$, i.e. $\Mg{c}$ is unitary, and, in particular,  the \textit{Plancherel equality} holds, that is,
\begin{align}\label{eq::Plancherel}
	\Vert h_1 \Vert^2_{\Lp[2]{+}(\basMSy{2c-1})} = \Vert \Mcg{c}{h_1} \Vert_{\Lp[2]{}}^2.
\end{align}
The adjoint $\Mgi{c}\in \mathbb{L}(\Lp[2]{}, \Lp[2]{+}(\basMSy{2c-1}))$ of the Mellin transform satisfies for $G\in\Lp[1]{}\cap \Lp[2]{}$ and $x\in\pRz$
\begin{align*}
	\Mcgi{c}{G} (x) = \int_\Rz x^{-c- 2\pi \iota t} G(t) d\Lm(t).
\end{align*}

In analogy to the additive convolution theorem, see \cite{Meister2009}, there is a \textit{multiplicative convolution theorem}. It states that for  $h_1, h_2\in\Lp[1]{+}(\basMSy{c-1})$
\begin{align}
	\Mcg{c}{h_1 * h_2} = \Mcg{c}{h_1} \cdot \Mcg{c}{h_2}\label{eq::convolution-theorem}
\end{align}
where $*$ denotes the multiplicative convolution defined in \cref{eq::multiplicative-convolution2}. We refer to \cite{BrennerMiguelDiss}  for a detailed discussion of the Mellin transform.

\paragraph{Multiplicative deconvolution}
In the multiplicative measurement error model, we are interested in an unknown density $\denX\colon\pRz \rightarrow \Rz_{\geq 0}$ of a positive random variable $X$ given independent and identically distributed copies $(Y_j)_{j\in\nset{n}}$ of
\begin{align}\label{eq::multi-model}
	Y = XU
\end{align}
where $X$ and $U$ are independent of each other and $U$ has a known density $\denU\colon\pRz \rightarrow \Rz_{\geq 0}$. In this setting the density $\denY\colon\pRz \rightarrow \Rz_{\geq 0}$ of $Y$ is given by $\denY = \denX * \denU.$
Making use of the multiplicative convolution theorem \cref{eq::convolution-theorem} it holds that $\Mcg{c}{\denY} = \Mcg{c}{\denX} \cdot \Mcg{c}{\denU}$. 
\paragraph{Goodness-of-fit testing task}

Throughout this paper, we assume that the random variables admit densities with respect to the Lebesgue measure on $\pRz$ and that the corresponding Mellin transforms exist. These assumptions are summarized in the following set of functions:
\begin{align*}
	\regD := \{h\in\Lp[1]{+}(\basMSy{c-1})\cap \Lp[2]{+}(\basMSy{2c-1}) : h \text{ Lebesgue density on }\pRz \}.
\end{align*}
Here and subsequently, the  quantities of the following assumption are fixed and assumed to be known.

\begin{ass}\label{ass:well-definedness-testing}
	Consider the multiplicative measurement error model, an arbitrary measurable symmetric density function $\wFSy\colon\Rz \rightarrow \Rz_{\geq 0}$ and $c\in\Rz$. In addition, let $\denNX,\denU\in\regD$.
\end{ass}
Under \cref{ass:well-definedness-testing} we are interested in the goodness-of-fit testing task
\begin{align}\label{eq::testingtask-simple2}
	H_0\colon \denX = \denNX  \quad\text{ against }\quad H_1 \colon \denX \not= \denNX.
\end{align}
The performance of a test is measured by how well it is able to distinguish between the null hypothesis and elements that are in some sense separated from the null. For this, first define for an arbitrary measurable and symmetric density function $\text{u}\colon\Rz\rightarrow\Rz$ the class 
\begin{align*}
	\regFw{\text{u}} :=  \{ h\in\Lp[2]{+}(\basMSy{2c-1})\cap \Lp[1]{+}(\basMSy{c-1}) :  \Mcg{c}{h}\in\Lp[2]{}(\text{u}^2) \}.
\end{align*}
Now, we consider the following norm to measure the separation of densities. 
Under \cref{ass:well-definedness-testing}, we define for $h\in\regFw{\wF}$ analogously to \cite{Comte2025} the quadratic functional 
\begin{align}\label{eq::norm-def-2}
	\wqf(h) := \Vert \Mcg{c}{h} \Vert_{\Lp[2]{}(\iwF[]{2})}^2 = \int_{\Rz}  \vert \Mcg{c}{h} (t)\vert^2 \iwF[]{2} (t) d\Lm(t).
\end{align}

We define an \textit{energy set} for the separation radius $\rho\in\pRz$ as
\begin{align*}
	\regrho := \left\{h\in\regFw{\wF}: \wqf(h) \geq \rho^2 \right\}
\end{align*}
and write shortly
\begin{align*}
	\regD\cap (\denNX +  \regrho)  := \{h\in\regD:  h-\denNX \in \regrho\}.
\end{align*}
Consequently, under \cref{ass:well-definedness-testing} the testing task can be written as
\begin{align*}
	H_0\colon \denX = \denNX  \quad\text{ against }\quad H_1 \colon \denX \in\regD\cap (\denNX + \regrho).
\end{align*}

\begin{il}\label{il::example-parameters}
	Different choices of the function $\wF$ allows us to cover the following possible goodness-of-fit testing tasks.
	\begin{itemize}
		\item[(i)] If $\wFSy = 1$, we get using the Plancherel type identity \cref{eq::Plancherel} that
		\begin{align*}
			\wqf(\denX) = \Vert \Mcg{c}{\denX} \Vert_{\Lp[2]{}}^2 = \Vert \denX \Vert_{\Lp[2]{+}(\basMSy{2c-1})}^2 = \int_{\pRz} \vert f(x)\vert^2 x^{2c-1} dx
		\end{align*}
		and  $\wqf(f) = \Vert \denX \Vert_{\Lp[2]{+}}^2$ in the special case of $c=\frac12$. Consequently, we cover the testing task 
		\begin{align*}
			H_0\colon \denX = \denNX  \quad\text{ against }\quad H_1 \colon \denX \in\regD,  \Vert \denX - \denNX\Vert_{\Lp[2]{+}}^2 \geq \rho.
		\end{align*}
		\item[(ii)] If $\iwF{2}(t)= \frac{1}{(c-1)^2+4\pi^2 t^2}$ for all $t\in\Rz$, we consider the quadratic functional evaluated at the survival function  $S$ of $X$, i.e.
		\begin{align*}
			\wqf (f)= \Vert S \Vert_{\Lp[2]{+}(\basMSy{2c-1})} =  \int_{\pRz} \vert S(x)\vert^2 x^{2c-1} dx.
		\end{align*}
		and  $\wqf(f) = \Vert S \Vert_{\Lp[2]{+}}^2$ in the special case of $c=\frac12$. For more details see \cite{BrennerMiguel-Phandoidaen2023}.  Consequently, we cover the testing task 
		\begin{align*}
			H_0\colon S = S_o \quad\text{ against }\quad H_1 \colon \denX \in\regD,  \Vert S - S_o \Vert_{\Lp[2]{+}}^2 \geq \rho.
		\end{align*}
		\item[(iii)] Let $\denX\in\mathcal{C}^\infty_0(\pRz)$, i.e. smooth with compact support in $\pRz$ such that for any $\beta\in\Nz$ the derivatives $D^\beta[\denX]:= \frac{d^\beta}{dx^\beta}f$ exist. 
		If $\iwF{2}(t)= \prod_{j=1}^\beta ((c+\beta-j)^2+4\pi^2t^2)$ for  $\beta\in\Nz$ then we have that 
		\begin{align*}
			\wqf (f)= \Vert D^\beta[\denX] \Vert_{\Lp[2]{+}(\basMSy{2(c+\beta)-1})}^2 =  \int_{\pRz} \vert D^\beta[\denX](t)\vert^2 x^{2(c+\beta)-1} dx,
		\end{align*}
		see Proposition 2.3.11, \cite{BrennerMiguelDiss}, and  $\wqf (f) = \Vert D^\beta[\denX] \Vert_{\Lp[2]{+}}^2$ in the special case $c=\frac12 - \beta$.
		Consequently, we cover the testing task 
		\begin{align*}
			H_0\colon  D^\beta[\denX] = D^\beta[\denNX] \quad\text{ against }\quad H_1 \colon \denX \in\regD,  \Vert  D^\beta[\denX] - D^\beta[\denNX]\Vert_{\Lp[2]{+}}^2 \geq \rho.
		\end{align*}
		\end{itemize}
\end{il}

Further, we assume some kind of regularity for the densities under the alternative. Analogously to the quadratic functional estimation in \cite{Comte2025} consider the following assumption.
\begin{ass}\label{ass::classes-testing}
\begin{enumerate}
	\item[(i)] Let $\operatorname{s}\colon\Rz\rightarrow \Rz_{\geq 1}$ be a symmetric, non-decreasing regularity function such that $\operatorname{s}(t)\rightarrow \infty$ as $\vert t\vert \rightarrow \infty$. 
	\item[(ii)] Assume that the symmetric function $\wF /\operatorname{s}$ is  non-increasing such that $\wF(t)/\operatorname{s}(t) = o(1)$ as $\vert t\vert \rightarrow \infty$.
\end{enumerate}
\end{ass}
Then, for $R\in\pRz$ define
\begin{align}\label{eq::reg-class}
		\regC := \left\{h\in\regFw{\operatorname{s}}:  \left\Vert\operatorname{s} \Mcg{c}{h}\right\Vert_{\Lp[2]{}}^2 \leq R^2 \right\}
\end{align}
and write shortly
\begin{align*}
	\regD\cap (\denNX + \regC \cap \regrho)  := \{h\in\regD:  h-\denNX \in \regC \cap \regrho\}.
\end{align*}
Consequently, under \cref{ass:well-definedness-testing,ass::classes-testing} we analyse the testing task 
\begin{align}\label{eq::testingtask-in-f}
	H_0\colon \denX = \denNX  \quad\text{ against }\quad H_1 \colon \denX \in\regD\cap (\denNX + \regC \cap \regrho).
\end{align}
Roughly speaking, in minimax testing one searches for the smallest $\rho$ such that \cref{eq::testingtask-in-f} is still testable with small error probabilities. We measure the accuracy of a test $\Delta\colon\pRz^n\rightarrow \{0,1\}$ by its maximal risk defined as the sum of the type I error probability and the maximal type II error probability over the $\rho$-separated alternative
\begin{align*} 
	\mathcal{R} \left( \Delta \vert \regD, \regC , \rho^2 \right) &= \ipM[\denU,\denNX]{n}\left( \Delta= 1 \right) + \sup_{\denX \in\regD\cap (\denNX + \regC \cap \regrho)} \ipM[\denU,\denX]{n}\left( \Delta = 0\right).
\end{align*}
A widely used approach consists in deriving a test statistic $\hat{q}^2$ which estimates $\wqf(\denX-\denNX)$ and forming a test given by 
\begin{align*}
	\Delta:= \mathds{1}_{\{\hat{q}^2 \geq \tau\}}
\end{align*}
for some suitable value $\tau\in\pRz$. In this paper, the test statistic is inspired by the estimator proposed in \cite{Comte2025}.
Before proposing an estimator of $\wqf(\denX-\denNX)$, we introduce some notations and an additional assumption. 
For brevity, we write for $x\in\pRz$ and $t\in\Rz$
\begin{align*}
	\Xtilde{x}{t} := x^{c-1 + 2\pi i t} \quad \text{ and }\quad \cfun{x}{t}:= \Xtilde{x}{t} -  \Ex[\denY][\Xtilde{Y}{t}],
\end{align*}
where evidently $ \Ex[\denY][\Xtilde{Y}{t}] = \Mcg{c}{\denY}(t)$ and $\cfun{Y}{t}$ is centred.

\begin{ass}\label{ass::error-well-defined}
	 $\Mcg{c}{\denU}(t)\not=0$ for all $t\in\Rz$ and $\mathds{1}_{[-k,k]}/\Mcg{c}{\denU}\in\Lp[\infty]{+}(\wF)$ for all $k\in\pRz$. 
\end{ass}
We first see that under \cref{ass:well-definedness-testing,ass::error-well-defined}  it holds that for any $\denX\in\regD$
\begin{align}
	\wqfk{k}(\denX-\denNX) &= \Vert \indc{-k}{k} ({\Mcg{c}{\denX}} - \Mcg{c}{\denNX} )\Vert^2_{\Lp[2]{}(\iwF{2})} \nonumber\\ 
	&= \Vert \indc{-k}{k} {\Mcg{c}{\denX}}  \Vert^2_{\Lp[2]{}(\iwF{2})} - 2 \langle \indc{-k}{k} {\Mcg{c}{\denX}} , \indc{-k}{k}\Mcg{c}{\denNX} \rangle_{\Lp[2]{}(\iwF{2})}\nonumber\\
	&\qquad + \Vert \indc{-k}{k} {\Mcg{c}{\denNX}}  \Vert^2_{\Lp[2]{}(\iwF{2})}\nonumber
\end{align}
and with the convolution theorem (c.f. \cref{eq::convolution-theorem}) we get
\begin{align}
	\wqfk{k}(\denX-\denNX)& = \Vert \indc{-k}{k} {\Mcg{c}{\denY}}/ \Mcg{c}{\denU}  \Vert^2_{\Lp[2]{}(\iwF{2})}\nonumber\\
	&\qquad - 2 \langle \indc{-k}{k} {\Mcg{c}{\denY}}/ \Mcg{c}{\denU}  , \indc{-k}{k}\Mcg{c}{\denNY}/ \Mcg{c}{\denU}  \rangle_{\Lp[2]{}(\iwF{2})}\nonumber\\
	&\qquad + \Vert \indc{-k}{k} {\Mcg{c}{\denNY}}/ \Mcg{c}{\denU}   \Vert^2_{\Lp[2]{}(\iwF{2})}.\label{eq::quadraticfunctionalreg2}
\end{align}
Since $\denNX$ and $\denU$ are known, the third term of \cref{eq::quadraticfunctionalreg2} is known as well. For the second term, we introduce the following estimator:
\begin{align*}
	\widehat{S}_{k} &:= \langle \indc{-k}{k} \Xtilde{Y_j}{\cdot}/ \Mcg{c}{\denU}  , \indc{-k}{k}\Mcg{c}{\denNY}/ \Mcg{c}{\denU}  \rangle_{\Lp[2]{}(\iwF{2})}\\
	&=\frac{1}{n} \sum_{j \in\llbracket n \rrbracket}  \int_\Rz \indc{-k}{k}(t) \vert\Mcg{c}{\denU}(t)\vert^{-2} \Xtilde{Y_j}{t} \overline{\Mcg{c}{\denNY}(t)}  \iwF{2}(t) d\Lm(t).
\end{align*}
The first term of \cref{eq::quadraticfunctionalreg2} we estimate by
\begin{align}
	\widehat{T}_k :=   \frac{1}{n(n-1)}\sum_{\substack{j\not= l\\ j,l \in\llbracket n\rrbracket }}  \int_{-k}^k \frac{\Xtilde{Y_j}{t}\Xtilde{Y_l}{-t}}{\vert\Mcg{c}{\denU}(t)\vert^2} \iwF[]{2} (t)  d\Lm(t). \label{eq::quadraticestim-testing}
\end{align}
Due to the independence of $(Y_j)_{j\in\llbracket n\rrbracket }$ we have that
\begin{align*}
	\wqfk{k}(\denX) &:= \iEx[\denY]{n} [\widehat{T}_k] = \int_{-k}^k \frac{\Ex[\denY][\Xtilde{Y_1}{t}] \Ex[\denY][\Xtilde{Y_2}{-t}]}{\vert\Mcg{c}{\denU}(t)\vert^2} \iwF[]{2} (t)  d\Lm(t) \\
	&= \int_{-k}^k  \vert \Mcg{c}{\denX}(t)\vert^2  \iwF[]{2} (t)  d\Lm(t).\nonumber
\end{align*}
Consequently, combining all three terms, the test statistic defined by
\begin{align}\label{eq::estimatorq2kb}
	\eqfk{k}  := 	\widehat{T}_k  - 2	\widehat{S}_{k} + \wqfk{k}(\denNX).
\end{align}
gives an unbiased estimator of $\wqfk{k}(\denX-\denNX) $. Next, we derive bounds for the  quantiles of the proposed test statistic, which are subsequently used to establish upper bounds of the radius of testing.
\section{Bounds for the quantiles of the test statistic}\label{sec::bounds-quantiles}
In this section, we derive bounds for the quantiles of the test statistic $\eqfk{k}$ defined in \cref{eq::estimatorq2kb}.
To define the critical value $\tau_k(\alpha)$ for $\alpha\in (0,1)$ of $\eqfk{k}$, we first introduce some notation. 
We set $L_\alpha := 1- \log \alpha \geq 1$ and define for $k \in \pRz$
\begin{align}
	\delfour{k} & := \int_{-k}^k \frac{1}{\vert \Mcg{c}{\denU}(t) \vert^{4}} \iwF{4}(t) d\Lm(t), \label{eq::delfour-testing}\\
	\delinf{k} & := \Vert\mathds{1}_{[-k,k]} /  \Mcg{c}{\denU} \Vert_{\Lp[\infty]{}(\iwF{})}^4 .\label{eq::delinf-testing}
\end{align} 
Due to \cref{ass::error-well-defined} both terms are well-defined and finite. More precisely, due to  the assumption $\mathds{1}_{[-k,k]}/\Mcg{c}{\denU}\in\Lp[\infty]{}(\wF)$  we have that $\mBo{k}$ is finite. It immediately follows that for any $p\in\Nz$ it holds $\mathds{1}_{[-k,k]}/\Mcg{c}{\denU}\in\Lp[2p]{}(\iwF{2p})$ for all $k\in\pRz$ since
\begin{align*}
	\int_{-k}^k \frac{1}{\vert \Mcg{c}{\denU}(t) \vert^{2p}} \iwF{2p}(t) d\Lm(t) \leq \mBo{k}^{p/2} (2k).
\end{align*}
To show bounds for the quantiles of the test statistic  we need additional moment assumptions.
\begin{ass}\label{eq::error-moments} 
	Let $\denU\in\Lp[\infty]{+}( \basMSy{2c-1})\cap\Lp[1]{+}(\basMSy{4(c-1)})$ and $\denNX\in\Lp[1]{+}(\basMSy{4(c-1)})$.
\end{ass}
It follows immediately from \cref{eq::error-moments}  that $\denU\in\Lp[1]{+}(\basMSy{2(c-1)})$. For $x,y\in\Rz$ use the notation $x\vee y := \max\{x, y\}$ and $x \wedge y := \min\{x,y\}$. Then, we write 
\begin{align}\label{eq::constants2}
    \quad \cstC[\denU] := \Vert \denU\Vert_{\Lp[\infty]{+}( \basMSy{2c-1})} / \Vert \denU\Vert_{\Lp[1]{+}(\basMSy{2(c-1)})}\vee 1,
\end{align}
which is finite under \cref{ass::error-well-defined,eq::error-moments}. If $h\in\Lp[1]{+}(\basMSy{2(c-1)})$, then  due to \cref{lem::normineq} it holds under \cref{eq::error-moments} that $h\mc \denU\in\Lp[1]{+}(\basMSy{2(c-1)})$ and we denote 
 \begin{align*}
	\cstV[h|\denU] := \Vert h \mc \denU\Vert_{\Lp[1]{+}({\basMSy{2(c-1)}})} \vee 1.
 \end{align*}
Analogously, if in addition $h\in\Lp[1]{+}(\basMSy{4(c-1)})$, then $h\mc \denU\in\Lp[1]{+}(\basMSy{4(c-1)})$ due to \cref{lem::normineq}  and we write
 \begin{align}\label{eq::const-moments}
	\icstVz[h|\denU]{} := \Vert h \mc \denU\Vert_{\Lp[1]{+}({\basMSy{4(c-1)}})} \vee 1.
 \end{align}
 Note that under \cref{eq::error-moments}, the constants $\cstV[\denNX|\denU]$ and $\icstVz[\denNX|\denU]{}$ are finite. Finally, the critical value is given for $\alpha \in(0,1)$ as
\begin{align}\label{eq::quantiletau}
	\tau_k(\alpha) := 
	\left( 18 \cstC[\denU]  \icstVz[\denNX|\denU]{}  + 69493 \frac{\sqrt{2k}}{n} \frac{L_{\alpha/2}}{\alpha} \right)  L_{\alpha/2}^{1/2} \frac{\sqrt{\delfour{k}}}{n} + 52 \cstV[\denNX|\denU]   \cstC[\denU]  L_{\alpha/2} \frac{\sqrt{\delinf{k}}}{n}.
\end{align}
The key element to analyse the behaviour of the test statistic $\etqfk{k} $ is the following decomposition:  
\begin{align}\label{eq::decompositiontest}
	\etqfk{k}  = U_{k} + 2W_{k} + \wqfk{k}(\denX-\denNX) 
\end{align}
with the canonical U-statistic 
\begin{align}\label{eq::canonicalustat}
	U_{k} := \frac{1}{n(n-1)}\sum_{\substack{j_1\not= j_2\\ j_1,j_2 \in\llbracket n\rrbracket }} \int_{-k}^k \frac{ \cfun{Y_{j_1}}{t} \cfunn{Y_{j_2}}{t}}{\vert\Mcg{c}{\denU}(t)\vert^{2} } \iwF{2}(t) d\Lm(t)
\end{align}
and the centred linear statistic
\begin{align}\label{eq::centeredlinearterm}
	W_{k} := \frac{1}{n} \sum_{j \in\llbracket n \rrbracket}  \int_{-k}^k   \frac{\cfun{Y_j}{t} (\overline{\Mcg{c}{\denY}}(t) - \overline{\Mcg{c}{\denNY}}(t))}{\vert\Mcg{c}{\denU}(t)\vert^{2}}\iwF{2}(t) d\Lm(t).
\end{align}
For the two following propositions, we apply results for the U-statistic and linear statistic, which are more technical and can be found in \cref{app::u-stat}. We first give a bound under the null hypothesis.
\begin{pr}[Bound for the quantiles of $\etqfk{k}$ under the null hypothesis]\label{pr::boundsquantiles}
	Let \cref{ass:well-definedness-testing,ass::error-well-defined,eq::error-moments} be satisfied and let $\alpha \in (0,1)$, $n\geq 2$ and $k\in\Nz$. 	Consider the estimator $\etqfk{k}$ and the threshold $\tau_k(\alpha)$ defined in \cref{eq::estimatorq2kb} and \cref{eq::quantiletau}, respectively. Under the null hypothesis we have that 
	\begin{align*}
		\ipM[\denU, \denNX]{n}(\etqfk{k} \geq \tau_k(\alpha)) \leq \alpha.
	\end{align*}
\end{pr}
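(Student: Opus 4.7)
The plan is to reduce the problem under the null to a single concentration inequality for a degenerate $U$-statistic. Since $\denX=\denNX$ under $H_0$, we have $\denY=\denX\mc\denU=\denNX\mc\denU=\denNY$ and therefore $\Mcg{c}{\denY}=\Mcg{c}{\denNY}$. Inspecting \cref{eq::centeredlinearterm} and \cref{eq::decompositiontest} this forces the centred linear part $W_k$ to vanish and the separation term $\wqfk{k}(\denX-\denNX)$ to be zero, so that the decomposition collapses to $\etqfk{k}=U_k$ under $\ipM[\denU,\denNX]{n}$. It is therefore enough to show $\ipM[\denU,\denNX]{n}(U_k\geq \tau_k(\alpha))\leq \alpha$.

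Next, I would interpret $U_k$ as a canonical $U$-statistic of order two with symmetric kernel
\[
  H_k(y_1,y_2) := \int_{-k}^k \frac{\cfun{y_1}{t}\,\cfunn{y_2}{t}}{\vert\Mcg{c}{\denU}(t)\vert^{2}}\,\iwF{2}(t)\,d\Lm(t),
\]
which is completely degenerate because $\Ex[\denY][\cfun{Y}{t}]=0$ for every $t\in\Rz$. The natural tool is then an exponential concentration inequality of Houdr\'e--Reynaud/Gin\'e--Lata{\l}a--Zinn type for canonical $U$-statistics, which I assume to be furnished as a preparatory lemma in \cref{app::u-stat}. Such a bound dominates $\ipM[\denU,\denNX]{n}(U_k\geq x)$ by a four-regime tail involving constants $A$ (uniform kernel bound), $B$ (weak $\Lp[2]{}$-norm), $C$ (conditional variance) and $D$ (bilinear operator norm) of $H_k$.

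The main computational step, and the one I expect to be the chief obstacle, is to translate these four kernel parameters into the quantities $\delfour{k}$, $\delinf{k}$ and the moment constants introduced in \cref{eq::constants2}--\cref{eq::const-moments}. Cauchy--Schwarz in the integration variable $t$ together with the pointwise identity $\vert\Xtilde{y}{t}\vert=y^{c-1}$ and the moment hypothesis $\denU,\denNX\in\Lp[1]{+}(\basMSy{4(c-1)})$ of \cref{eq::error-moments} yield a bound of the form $A\lesssim \cstV[\denNX|\denU]\cstC[\denU]\sqrt{\delinf{k}}$. For the variance-type parameters $B$, $C$, $D$, I would expand the product $\cfun{Y_1}{t_1}\cfunn{Y_2}{t_2}$, apply the multiplicative convolution theorem \cref{eq::convolution-theorem} and the Plancherel identity \cref{eq::Plancherel} in order to express the resulting mixed expectations as $\Lp[2]{}$-integrals against $\Mcg{c}{\denNY}$, and absorb the fourth-moment contribution into the constant $\icstVz[\denNX|\denU]{}$; in each case the outcome is proportional to $\sqrt{\delfour{k}}$ times a product of $\cstC[\denU]$ and $\icstVz[\denNX|\denU]{}$.

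Finally, I would invert the four-regime tail at level $\alpha$: setting each regime equal to a prescribed fraction of $\alpha$ and unioning the contributions produces three additive terms scaling as $\sqrt{\delfour{k}}\,L_{\alpha/2}^{1/2}/n$, $\sqrt{\delinf{k}}\,L_{\alpha/2}/n$ and a lower-order residual of order $\sqrt{2k\,\delfour{k}}\,L_{\alpha/2}^{3/2}/(\alpha n^{2})$, the last of which is handled by a Markov step on a fourth-moment remainder of the $U$-statistic. Absorbing the universal numerical constants coming from the concentration inequality and the Markov step yields the explicit coefficients $18$, $52$ and $69493$ appearing in $\tau_k(\alpha)$, after which the bound $\ipM[\denU,\denNX]{n}(\etqfk{k}\geq \tau_k(\alpha))\leq \alpha$ follows. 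The bookkeeping of the moment factors that produce these precise numerical constants, rather than any single sharp analytic inequality, is what makes the argument delicate.
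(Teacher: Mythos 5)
Your reduction to the degenerate $U$-statistic $U_k$ under the null is exactly what the paper does, and you correctly identify that the tool is a Gin\'e--Lata\l{}a--Zinn--type concentration inequality furnished in \cref{app::u-stat} (the paper's \cref{pr::Ustatconcentration}). Where the argument breaks is in the treatment of the uniform bound $A$ for the kernel: you claim $A\lesssim \cstV[\denNX|\denU]\cstC[\denU]\sqrt{\delinf{k}}$ via Cauchy--Schwarz and the moment hypothesis, but the kernel $H_k$ is \emph{not} uniformly bounded. The factor $\cfun{y}{t}=y^{c-1+2\pi it}-\Ex[\denY][\Xtilde{Y}{t}]$ has modulus $\geq y^{c-1}-\Ex[\denY][Y^{c-1}]$, and $y\mapsto y^{c-1}$ is unbounded on $\pRz$ whenever $c\neq 1$, so no $\Lp[\infty]{}$-bound $A$ of the required form exists for $H_k$ itself; the fourth-moment condition of \cref{eq::error-moments} controls integrability, not suprema. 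The paper therefore truncates at a threshold $\delta$, writing $\Xtilde{y}{t}=\bcut{k}(y,t)+\ucut{k}(y,t)$ according to whether $y^{c-1}\leq\delta$, which induces a decomposition $U_k=U_k^b+U_k^u$; the concentration inequality is applied only to the bounded part $U_k^b$ (where $A=4\delta^2\Vert\mathds{1}_{[-k,k]}/\Mcg{c}{\denU}\Vert^2_{\Lp[2]{}(\iwF{2})}$, \emph{not} involving $\delinf{k}$), and the unbounded remainder $U_k^u$ is handled by a variance bound (\cref{lem::var-bound-testing}) plus Markov. The $1/\alpha$-dependence and the $\sqrt{2k}$ in $\tau_k(\alpha)$ then arise from optimizing $\delta$ to balance the Markov tail of $U_k^u$ against the $\delta^2$-term in the exponential tail of $U_k^b$ (with $p=2$ the paper sets $\delta^2=L_{\alpha/2}^{-1}\alpha^{-1}$).

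Your sketch does gesture at ``a Markov step on a fourth-moment remainder,'' so the idea is lurking, but it is not attached to a truncation of the kernel and the stated form of $A$ is wrong. Without the split $U_k=U_k^b+U_k^u$ the application of \cref{pr::Ustatconcentration} is not licensed, and that is precisely the content of \cref{lem::UstatconcentrationMellinCase,lem::quantil-bounded-ustat,lem::quantil-unbounded-ustat,lem::quantil-ustat}, to which the paper's proof of the proposition simply defers (with $p=2$, $\gamma=\alpha$, and the identification $\tau^{U_k}(\alpha)=\tau_k(\alpha)$). Filling in this truncation step would bring your proposal in line with the paper's argument.
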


\begin{proof}[Proof of \cref{pr::boundsquantiles}]
		If $\denX  = \denNX$ and, hence $\denY = \denNY$ and $\Mcg{c}{\denY} = \Mcg{c}{\denNY}$, the decomposition \cref{eq::decompositiontest} simplifies to $\etqfk{k} = U_{k}$. Due to \cref{eq::error-moments}, we have that $\denNX,\denU\in\Lp[1]{+}(\basMSy{4(c-1)})$ and the assumptions of \cref{lem::quantil-ustat} given in \cref{app::u-stat} are satisfied for $p=2$. Moreover, with $\gamma = \alpha$ and using the notations in \cref{eq::delfour-testing} and \cref{eq::delinf-testing}, we have that $\tau^{U_k}(\gamma) = \tau_k(\alpha)$. Thus, we immediately obtain the result from \cref{lem::quantil-ustat}. 
\end{proof}
Let us continue with the bound of the quantiles of the test statistic under the alternative.
\begin{pr}[Bound for the quantiles of $\etqfk{k}$ under the alternative]\label{pr::boundsquantiles-alternative}
	Let \cref{ass:well-definedness-testing,ass::error-well-defined,eq::error-moments} be satisfied and let $\beta \in (0,1)$, $n\geq 2$ and $k\in\Nz$. 	Consider the estimator $\etqfk{k}$ defined in \cref{eq::estimatorq2kb}. Under the alternative, if $\denX\in\regD\cap\Lp[1]{+}(\basMSy{4(c-1)})$ satisfies for an arbitrary critical value $\tau\in\pRz$ the separation condition 
	\begin{align}\label{eq::separationcond}
		\wqfk{k}(\denX-\denNX)  \geq 2\tau + 2\left(18\cstC[\denU] \icstVz[\denX\vert\denU]{}   
			  + 138986 \frac{L_{\beta/4}}{\beta} \frac{\sqrt{2k}}{n}\right) L_{\beta/4}^{1/2} \frac{\sqrt{\vBo{k}}}{n} + 112 \cstC[\denU] \cstV[\denX\vert\denU]\frac{L_{\beta/4}}{\beta} \frac{\sqrt{\mBo{k}}}{n},
	\end{align}	
	then, we have that
	\begin{align*}
		\ipM[\denU,\denX]{n}(\etqfk{k} < \tau) \leq \beta.
	\end{align*}
\end{pr}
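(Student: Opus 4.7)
The plan is to exploit the decomposition in \cref{eq::decompositiontest},
\[
\etqfk{k} = U_{k} + 2W_{k} + \wqfk{k}(\denX-\denNX),
\]
which writes the statistic as the sum of a mean-zero canonical U-statistic $U_{k}$, a mean-zero centred linear statistic $W_{k}$, and the deterministic signal $\wqfk{k}(\denX-\denNX)$. Under the separation assumption \cref{eq::separationcond}, the signal exceeds $2\tau$ plus two stochastic-error terms, so the event $\{\etqfk{k} < \tau\}$ forces $U_{k} + 2W_{k}$ to be appreciably negative. The proof therefore reduces to controlling the lower tails of $U_{k}$ and $W_{k}$ separately and combining them by a union bound.

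Let me write $A_k$ for the term in \cref{eq::separationcond} involving $\sqrt{\vBo{k}}$ and $B_k$ for the term involving $\sqrt{\mBo{k}}$, so that the separation hypothesis reads $\wqfk{k}(\denX-\denNX) \ge 2\tau + A_k + B_k$. Splitting the required negative deviation between the two stochastic components yields the inclusion
\[
\{\etqfk{k} < \tau\} \subseteq \{-U_{k} > A_k\} \cup \{-2W_{k} > \tau + B_k\}.
\]
For the U-statistic $U_{k}$ I would apply the deviation bound of the appendix lemma already used in the proof of \cref{pr::boundsquantiles}, now evaluated under $\ipM[\denU,\denX]{n}$ rather than $\ipM[\denU,\denNX]{n}$; this replaces $\icstVz[\denNX|\denU]{}$ by $\icstVz[\denX|\denU]{}$, and choosing the tail level as $\beta/4$ accounts for the factor $L_{\beta/4}$ appearing in the definition of $A_k$, delivering $\ipM[\denU,\denX]{n}(-U_{k} > A_k) \le \beta/4$.

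The main obstacle is the centred linear statistic $W_{k}$, whose variance is proportional to $\wqfk{k}(\denX-\denNX)$: the summand in \cref{eq::centeredlinearterm} carries the factor $\overline{\Mcg{c}{\denY}}-\overline{\Mcg{c}{\denNY}}=\overline{\Mcg{c}{\denU}}(\overline{\Mcg{c}{\denX}}-\overline{\Mcg{c}{\denNX}})$, and the Plancherel identity \cref{eq::Plancherel} together with \cref{ass::error-well-defined} bound $\mathrm{Var}(W_k)$ by a multiple of $\wqfk{k}(\denX-\denNX)\,\mBo{k}/n$. A Bernstein inequality for the i.i.d.\ sum $W_k$ therefore produces a one-sided deviation of order $\sqrt{\wqfk{k}(\denX-\denNX)\,\mBo{k}\,L_{\beta/4}/n} + \cstV[\denX|\denU]\sqrt{\mBo{k}}\,L_{\beta/4}/n$. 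To convert this variance-dependent deviation into the required additive threshold of magnitude at least $(\tau + B_k)/2$, I would use the elementary inequality $2\sqrt{ab} \le \eta\,a + \eta^{-1} b$ with $\eta$ chosen so that the resulting $\eta\,\wqfk{k}(\denX-\denNX)$ contribution is absorbed back into the signal via the separation hypothesis. This absorption step is precisely what forces the factor $2$ in front of $\tau$ and the numerical constant $112$ appearing in $B_k$; once it is carried out, a final union bound combines the U-statistic and linear-statistic tail estimates to give the desired probability at most $\beta$.
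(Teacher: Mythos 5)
Your decomposition into $U_k$, $W_k$, and the deterministic signal $\wqfk{k}(\denX-\denNX)$, followed by a union bound on two bad events, is exactly the strategy in the paper (which conditions on $\Omega_U^c$ rather than using an inclusion, but the logic is equivalent). The treatment of $U_k$ by reusing the appendix lemma \cref{lem::quantil-ustat} under $\ipM[\denU,\denX]{n}$, with $\icstVz[\denNX|\denU]{}$ replaced by $\icstVz[\denX|\denU]{}$, is also what the paper does, though they feed in $\gamma=\beta/2$ and the lemma's internal $\gamma/2$ splitting produces the $L_{\beta/4}$ factors, rather than you choosing $\beta/4$ at the outset.

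The genuine gap is in your proposed concentration for the linear statistic $W_k$. You invoke a Bernstein inequality, but Bernstein requires bounded (or sub-exponential) summands, and the summands in \cref{eq::centeredlinearterm} contain the factor $\Xtilde{Y_j}{t}=Y_j^{c-1+2\pi it}$, whose modulus $Y_j^{c-1}$ is unbounded whenever $c\neq 1$ (the same issue that forces the bounded/unbounded truncation for the U-statistic kernel). Without a truncation argument of the same sort used for $U_k$, the Bernstein bound you write down --- with a logarithmic $L_{\beta/4}$ tail --- is not justified. The paper instead applies Markov's inequality to the second moment of $W_k$ (\cref{lem::linear-stat-quantile}), which needs only a variance bound and yields a polynomial-in-$1/\beta$ tail; this is why the $\sqrt{\mBo{k}}$ term of the separation condition carries a factor $1/\beta$, subsequently absorbed into $L_{\beta/4}/\beta$. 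Your observation that $\mathrm{Var}(W_k)$ is controlled by a multiple of $\wqfk{k}(\denX-\denNX)\,\sqrt{\mBo{k}}/n$ and that an AM--GM step $2\sqrt{ab}\le\eta a+\eta^{-1}b$ absorbs the $\wqfk{k}(\denX-\denNX)$ factor back into the signal is exactly right --- that is what the quantity $\tau^{W_k}(\gamma)$ in \cref{lem::linear-stat-quantile} encodes with the $\tfrac14\wqfk{k}(\denX-\denNX)$ term --- but the probabilistic tool upstream should be Markov, not Bernstein.
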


\begin{proof}[Proof of \cref{pr::boundsquantiles-alternative}]
	Keeping the decomposition \cref{eq::decompositiontest} in mind, we control the deviations of the U-statistic $U_{k}$ and the linear statistic $W_{k}$ separately, applying \cref{lem::quantil-ustat} and \cref{lem::linear-stat-quantile}, respectively. The assumptions of \cref{lem::quantil-ustat} are satisfied with $p=2$. Therefore, making use of $\gamma =\beta/2$ and the notations of \cref{eq::delfour-testing} and \cref{eq::delinf-testing} we get for the quantile $\tau^{U_k}(\beta/2)$ that
	 \begin{align*}
		\tau^{U_k}(\beta/2) &= \left( 18 \icstVz[\denX|\denU]{} \cstC[\denU] + 138986 \frac{L_{\beta/4}}{\beta} \frac{\sqrt{2k}}{n} \right) L_{\beta/4}^{1/2}\frac{\sqrt{\vBo{k}}}{n}+ 52 \cstC[\denU] \cstV[\denX|\denU] L_{\beta/4} \frac{\sqrt{\mBo{k}}}{n}.
	\end{align*}
	The event 
	 $\Omega_U := \{U_{k}  \leq - \tau^{U_k}(\beta/2)\}$ satisfies
 	\begin{align}
 		\ipM[\denU,\denX]{n} (\Omega_U) &= \ipM[\denU,\denX]{n} (-U_{k}  \geq \tau^{U_k}(\beta/2))  \leq \frac{\beta}{2}\label{eq::prob-uk}
 	\end{align}
	due to \cref{lem::quantil-ustat} and the usual symmetry argument. The assumptions of \cref{lem::linear-stat-quantile} are satisfied and with $\gamma=\beta/2$ a quantile-bound of the linear statistic $W_k$ is given by
	\begin{align*}
		\tau^{W_k}(\beta/2) = 2 \frac{\sqrt{\delinf{k}}}{n}\frac{\cstC[\denU] \cstV[\denX|\denU]}{\beta}+ \frac{1}{4}q_k^2(\denX-\denNX).
	\end{align*}
	Define the event $\Omega_W := \{W_{k} <  -\tau^{W_k}(\beta/2) \}$. Due to \cref{lem::linear-stat-quantile} we have 
	\begin{align}
		\ipM[\denX,\denU]{n} (\Omega_W) =\ipM[\denX,\denU]{n} (W_{k} <  -\tau^{W_k}(\beta/2) ) \leq \frac{\beta}{2}.\label{eq::prob-wk}
	\end{align}
	Further, using $L_{\beta/4}, \frac{1}{\beta}\geq 1$ it holds that 
    \begin{align}
        &\tau^{U_k}(\beta/2)  +2\tau^{W_k}(\beta/2)\nonumber \\
		& = \left( 18 \icstVz[\denX|\denU]{} \cstC[\denU] + 138986 \frac{L_{\beta/4}}{\beta} \frac{\sqrt{2k}}{n} \right) L_{\beta/4}^{1/2}\frac{\sqrt{\vBo{k}}}{n}+ 52 \cstC[\denU] \cstV[\denX|\denU] L_{\beta/4} \frac{\sqrt{\mBo{k}}}{n} + 4 \frac{\sqrt{\delinf{k}}}{n}\frac{\cstC[\denU] \cstV[\denX|\denU]}{\beta}\nonumber\\
        & \quad  +\frac{1}{2}q_k^2(\denX-\denNX)\nonumber\\
		&\leq  \left( 18 \icstVz[\denX|\denU]{} \cstC[\denU] + 138986 \frac{L_{\beta/4}}{\beta} \frac{\sqrt{2k}}{n} \right) L_{\beta/4}^{1/2}\frac{\sqrt{\vBo{k}}}{n} + 56\cstC[\denU] \cstV[\denX|\denU] \frac{L_{\beta/4}}{\beta}\frac{\sqrt{\delinf{k}}}{n}+\frac{1}{2}q_k^2(\denX-\denNX).\label{eq::sum-taus}
    \end{align}
	Reformulating the separation condition given in \cref{eq::separationcond}, yields that
    \begin{align*}
       \wqfk{k}(\denX-\denNX)&\geq \tau + \left(18\cstC[\denU] \icstVz[\denX\vert\denU]{}   
			  + 138986 \frac{L_{\beta/4}}{\beta} \frac{\sqrt{2k}}{n}\right) L_{\beta/4}^{1/2} \frac{\sqrt{\vBo{k}}}{n} + 56 \cstC[\denU] \cstV[\denX\vert\denU]\frac{L_{\beta/4}}{\beta} \frac{\sqrt{\mBo{k}}}{n} \\
			  &\qquad + \frac{1}{2}  \wqfk{k}(\denX-\denNX).
    \end{align*}
	Plugging \cref{eq::sum-taus} into the last inequality, it follows that 
    \begin{align}
        \wqfk{k}(\denX-\denNX) \geq \tau + \tau^{U_k}(\beta/2)  +2\tau^{W_k}(\beta/2).\label{eq::ineq-taus}
    \end{align}
    Thus, combining \cref{eq::prob-uk,eq::prob-wk,eq::ineq-taus} the  decomposition given in \cref{eq::decompositiontest} implies
 	\begin{align*}
 	 &\ipM[\denX,\denU]{n} \left(\etqfk{k} < \tau \right)= \ipM[\denX,\denU]{n}\left( \left\{\etqfk{k} < \tau  \right\} \cap \Omega_U \right) + \ipM[\denX,\denU]{n} \left( \left\{\etqfk{k} < \tau  \right\} \cap \Omega_U^c \right)\\
 	&\leq\ipM[\denX,\denU]{n}(\Omega_U) +\ipM[\denX,\denU]{n}\left( 2W_{k} < \tau + \tau^{U_k}(\beta/2) - \etqfk{k}\right)\\
 	&\leq \frac{\beta}2 + \ipM[\denX,\denU]{n}(2W_{k} < -2\tau^{W_k}(\beta/2) )\\ 
 	&\leq \beta
	 \end{align*}
	which shows the claim and  completes the proof.
\end{proof}
\section{Upper bound for the radius of testing}\label{sec::uperbound-radius-testing}
For $k\in\Nz$ and $\alpha\in(0,1)$, we use the test statistic $\eqfk{k}$ defined in \cref{eq::estimatorq2kb} and the critical value $\tau_k(\alpha)$ given in \cref{eq::quantiletau} to define the test
\begin{align}\label{eq::test-stat2}
\Delta_{k,\alpha} := \mathds{1}_{\{\eqfk{k}  \geq \tau_k(\alpha) \}}.
\end{align}
From \cref{pr::boundsquantiles} follows immediately that the test $\Delta_{k,\alpha}$
is a level-$\alpha$-test for all $k\in\Nz$. To analyse its power over the alternative, we consider the regularity class $\regC$ defined in \cref{eq::reg-class} with regularity parameter $\operatorname{s}$ satisfying \cref{ass::classes-testing} and  $R\in\pRz$.
\cref{pr::boundsquantiles-alternative} allows to characterize elements in $\regC$ for which $\Delta_{k,\alpha} $ is powerful. Exploiting these results, we derive in this section an upper bound for the radius of testing of $\Delta_{k,\alpha} $ in terms of $\delfour{k}$ and $\delinf{k}$ defined in \cref{eq::delinf-testing,eq::delfour-testing}, respectively, and the regularity parameter $\operatorname{s}$. That is, define
\begin{align*}
	\rho^2_{k,\operatorname{s}} := \rho^2_{k,\operatorname{s}}(n) :=\frac{\iwF[]{2}(k)}{\operatorname{s}^2(k)}  \vee \frac{1}{n} \left(\sqrt{\delfour{k}} \vee \sqrt{\delinf{k}}\right).
\end{align*}
For the result on the upper bound of the radius of testing, we need that the additional moment assumption for the hypothesis (\cref{eq::error-moments}) also holds uniformly for the elements of the alternative. More precisely, we introduce for $\operatorname{v}\in\Rz$, $\text{v}{\geq 1}$ the set
\begin{align*}
	\regDv := \{ h\in\regD \cap \Lp[1]{+}(\basMSy{4(c-1)}):\icstVz[h|\denU]{} \leq \operatorname{v} \}.
\end{align*}
The constant $\icstVz[h|\denU]{}$ is defined in \cref{eq::const-moments} and finite for $h\in\Lp[1]{+}(\basMSy{4(c-1)})$. The following result gives a bound on the maximal risk of the proposed test.
\begin{pr}[Upper bound for the radius of testing of $\Delta_{k,\gamma/2}$]\label{pr::upperbound-radius-testing}
	Under \cref{ass::classes-testing,ass:well-definedness-testing,ass::error-well-defined,eq::error-moments}, for $\gamma\in(0,1)$ and $\operatorname{d}\in\pRz$ define 
	\begin{align}
		A_\gamma^2 := R^2 + 140 \frac{L_{\gamma/8}}{\gamma} \cstC[\denU]\icstVz[\denNX|\denU]{} + 260 \frac{L_{\gamma/8}}{\gamma} \cstC[\denU]\operatorname{v} + 833934 \frac{L^{3/2}_{\gamma/8}}{\gamma} \operatorname{d}\label{eq::A-def}
	\end{align}
	and
	\begin{align*}
		\eta_k^2 := \eta_k^2(n) := 1 \vee \frac{\sqrt{2k}}{\operatorname{d}n}.
	\end{align*}
	For all $A\geq A_\gamma$ and $n,k\in\Nz$ with $n\geq2$  we have
	\begin{align*}
		\mathcal{R} \left( \Delta_{k, \gamma/2} \vert 	\regDv, \regC ,A^2\rho_{k,\operatorname{s}}^2\cdot \eta_k^2 \right) \leq \gamma.
	\end{align*}
\end{pr}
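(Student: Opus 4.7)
The plan is to decompose the maximal risk into its Type~I and Type~II components and to bound each by $\gamma/2$ via the two preceding propositions, both applied at probability level $\gamma/2$. The Type~I bound is immediate: since $\Delta_{k,\gamma/2}$ rejects when $\eqfk{k} \geq \tau_k(\gamma/2)$, \cref{pr::boundsquantiles} with $\alpha = \gamma/2$ directly yields $\ipM[\denU,\denNX]{n}(\Delta_{k,\gamma/2}=1) \leq \gamma/2$.

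For the Type~II bound, I would fix an arbitrary $\denX\in\regDv$ with $\denX - \denNX \in \regC$ and $\wqf(\denX - \denNX) \geq A^2\rho_{k,\operatorname{s}}^2 \eta_k^2$, and apply \cref{pr::boundsquantiles-alternative} with $\beta = \gamma/2$ and $\tau = \tau_k(\gamma/2)$. The task then reduces to verifying the separation condition \cref{eq::separationcond}, i.e., producing a lower bound on the truncated quantity $\wqfk{k}(\denX-\denNX)$ rather than on $\wqf(\denX-\denNX)$. I would close the truncation gap by bounding the tail: since \cref{ass::classes-testing}(ii) makes $\iwF{2}/\operatorname{s}^2$ symmetric and non-increasing in $\vert t\vert$ and since $\denX-\denNX\in\regC$,
\[
\wqf(\denX-\denNX) - \wqfk{k}(\denX-\denNX) = \int_{\vert t\vert > k}\vert\Mcg{c}{\denX-\denNX}(t)\vert^2\operatorname{s}^2(t)\,\frac{\iwF{2}(t)}{\operatorname{s}^2(t)}\,d\Lm(t) \leq \frac{R^2\,\iwF{2}(k)}{\operatorname{s}^2(k)}.
\]
Combined with $\rho_{k,\operatorname{s}}^2 \geq \iwF{2}(k)/\operatorname{s}^2(k)$, $\eta_k^2 \geq 1$ and the hypothesis $\wqf(\denX-\denNX) \geq A^2\rho_{k,\operatorname{s}}^2\eta_k^2$, this yields $\wqfk{k}(\denX-\denNX) \geq (A^2 - R^2)\,\rho_{k,\operatorname{s}}^2\eta_k^2$.

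It remains to verify that $(A^2 - R^2)\rho_{k,\operatorname{s}}^2\eta_k^2$ dominates the right-hand side of \cref{eq::separationcond}. I would use the building-block inequalities $\rho_{k,\operatorname{s}}^2\eta_k^2 \geq n^{-1}\sqrt{\delfour{k}}$, $\rho_{k,\operatorname{s}}^2\eta_k^2 \geq n^{-1}\sqrt{\delinf{k}}$ and $\rho_{k,\operatorname{s}}^2\eta_k^2 \geq \sqrt{2k\,\delfour{k}}/(\operatorname{d}\,n^2)$ together with two systematic simplifications: (a) the bounds $L_{\gamma/4} \leq L_{\gamma/8}$ and $1/\gamma \geq 1$ allow uniform replacement of every log-factor appearing in the separation bound by $L_{\gamma/8}/\gamma$; (b) Cauchy--Schwarz applied to the probability density $\denX \mc \denU$ yields $\cstV[\denX|\denU] \leq \sqrt{\icstVz[\denX|\denU]{}} \leq \sqrt{\operatorname{v}} \leq \operatorname{v}$ and, analogously, $\cstV[\denNX|\denU] \leq \sqrt{\icstVz[\denNX|\denU]{}} \leq \icstVz[\denNX|\denU]{}$ via the $\vee 1$ normalisation. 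With these reductions, the $140\,\cstC[\denU]\icstVz[\denNX|\denU]{}$-block of $A_\gamma^2$ is split between the $\icstVz[\denNX|\denU]{}$- and $\cstV[\denNX|\denU]$-contributions of $2\tau_k(\gamma/2)$, the $260\,\cstC[\denU]\operatorname{v}$-block between the $\icstVz[\denX|\denU]{}$- and $\cstV[\denX|\denU]$-contributions of the alternative remainder, and the $833934\,\operatorname{d}$-block absorbs the $\sqrt{2k}/n$-heavy cross-term via the third building-block inequality.

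The main obstacle is precisely this constant-tracking step: the right-hand side of \cref{eq::separationcond} mixes three noise proxies ($\delfour{k}$, $\delinf{k}$, $\sqrt{2k}$), two moment constants evaluated at both $\denX$ and $\denNX$, and two distinct log-factors $L_{\gamma/4}^{1/2}$ and $L_{\gamma/8}^{1/2}$. One has to choose the internal splits of the blocks $140$, $260$, $833934$ carefully so that every cross-term is dominated simultaneously for all $\gamma\in(0,1)$; the numerical constants are calibrated just tightly enough to allow such a split (for example, splitting the $140$-block as $30+110$ covers the $\icstVz[\denNX|\denU]{}$- and $\cstV[\denNX|\denU]$-contributions since $L_{\gamma/8}/(\gamma L_{\gamma/4}^{1/2}) \geq 1$ and $L_{\gamma/8}/(\gamma L_{\gamma/4}) \geq 1$). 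Once this verification is in place, \cref{pr::boundsquantiles-alternative} gives $\ipM[\denU,\denX]{n}(\Delta_{k,\gamma/2}=0) \leq \gamma/2$ uniformly over the alternative, which combined with the Type~I bound completes the proof.
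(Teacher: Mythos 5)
Your proposal follows the paper's proof essentially line by line: the same Type~I/Type~II decomposition, the same invocation of \cref{pr::boundsquantiles} and \cref{pr::boundsquantiles-alternative} at level $\gamma/2$, the same bias bound $\wqf - \wqfk{k} \leq R^2\,\iwF{2}(k)/\operatorname{s}^2(k)$ obtained from $\denX-\denNX\in\regC$ and \cref{ass::classes-testing}(ii), the same three building-block inequalities $\rho_{k,\operatorname{s}}^2\eta_k^2 \geq n^{-1}\sqrt{\delfour{k}},\ n^{-1}\sqrt{\delinf{k}},\ \sqrt{2k\,\delfour{k}}/(\operatorname{d}n^2)$, and the same log-factor reductions $L_{\gamma/4}\leq L_{\gamma/8}$, $1/\gamma\geq 1$ and moment bounds $\cstV[\cdot|\denU]\leq\icstVz[\cdot|\denU]{}\leq\operatorname{v}$. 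One quibble in the "for example" remark: the split $30+110$ of the $140$-block does not go through — after multiplying $\tau_k(\gamma/2)$ by $2$ the $\icstVz[\denNX|\denU]{}$-contribution carries the constant $2\cdot 18 = 36 > 30$, and the chain $L_{\gamma/8}/(\gamma L_{\gamma/4}^{1/2})\geq 1$ only gives $30\geq 30$, not $30\geq 36$; a valid split is $36+104$ (tight on both sides), or one can proceed as the paper does and first bound $\tau_k(\gamma/2)\leq\bigl(70\,\cstC[\denU]\icstVz[\denNX|\denU]{}\tfrac{L_{\gamma/8}}{\gamma} + 138986\tfrac{\sqrt{2k}}{n}\tfrac{L_{\gamma/8}^{3/2}}{\gamma}\bigr)\bigl(\tfrac{\sqrt{\delfour{k}}}{n}\vee\tfrac{\sqrt{\delinf{k}}}{n}\bigr)$ by merging the two moment constants before multiplying by $2$. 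This is a minor arithmetic slip in an illustrative aside and does not affect the soundness of the argument.
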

\begin{proof}[Proof of \cref{pr::upperbound-radius-testing}] 
	The result follows from \cref{pr::boundsquantiles} and \cref{pr::boundsquantiles-alternative}  with $\alpha = \beta = \gamma/2$, $\tau = \tau_k(\alpha)$, and the definition of the maximal risk
	\begin{align*}
		\mathcal{R} \left( \Delta_{k, \gamma/2} \vert 	\regDv, \regC ,A^2\rho_{k,\operatorname{s}}^2\cdot \eta_k^2 \right)  &= \ipM[\denU,\denNX]{n}\left(  \Delta_{k, \gamma/2}= 1 \right) \\
		&\quad+ \sup_{\denX \in\regDv\cap (\denNX + \regC \cap \regF_{\geq A^2\rho_{k,\operatorname{s}}^2\cdot \eta_k^2})} \ipM[\denU,\denX]{n}\left(  \Delta_{k, \gamma/2} = 0\right)\\
		&\leq \frac{\gamma}{2} + \frac{\gamma}{2} = \gamma.
	\end{align*}
	Since \cref{ass:well-definedness-testing,ass::error-well-defined,eq::error-moments} are satisfied, we can apply \cref{pr::boundsquantiles}. To conclude the proof, we need  to check the assumptions of \cref{pr::boundsquantiles-alternative}, that is, to verify \cref{eq::separationcond} for elements in the alternative $\denX \in\regDv\cap (\denNX + \regC \cap \regF_{\geq A^2\rho_{k,\operatorname{s}}^2\cdot \eta_k^2})$.
	More precisely, under \cref{ass::classes-testing,ass:well-definedness-testing,ass::error-well-defined,eq::error-moments} it remains to show
	\begin{align}
		\wqfk{k}(\denX-\denNX)  &\geq 2\tau_k(\gamma/2) + 2\left(18\cstC[\denU] \icstVz[\denX\vert\denU]{}   
			  + 2\cdot 138986 \frac{L_{\gamma/8}}{\gamma} \frac{\sqrt{2k}}{n}\right) L_{\gamma/8}^{1/2} \frac{\sqrt{\vBo{k}}}{n} \nonumber\\
			  &\quad + 2\cdot 112 \cstC[\denU] \cstV[\denX\vert\denU]\frac{L_{\gamma/8}}{\gamma} \frac{\sqrt{\mBo{k}}}{n}.\label{eq::bound-on-A}
	\end{align}
	Indeed, with \cref{ass::classes-testing} and  $\denX-\denNX\in\regC $, we bound the bias under the alternative. More precisely, it holds that
	\begin{align}
		\wqf(\denX-\denNX) - \wqfk{k}(\denX-\denNX) &= \Vert \mathds{1}_{[-k,k]^c} (\Mcg{c}{\denX}- \Mcg{c}{\denNX}) \Vert_{\Lp[2]{}(\iwF[]{2})}^2\nonumber\\
		&= \Vert \mathds{1}_{[-k,k]^c}(\Mcg{c}{\denX}- \Mcg{c}{\denNX}) {\operatorname{s}} / {\operatorname{s}} \Vert_{\Lp[2]{}(\iwF[]{2})}^2 \nonumber\\
		&\leq \Vert \mathds{1}_{[-k,k]^c}/ {\operatorname{s}}\Vert_{\Lp[\infty]{}(\wF)}^2 \Vert\operatorname{s} (\Mcg{c}{\denX}- \Mcg{c}{\denNX}) \Vert_{\Lp[2]{}(\iwF[]{2})}^2\nonumber\\
		&\leq \frac{\iwF[]{2}(k)}{\operatorname{s}^2(k)}  R^2.\label{eq::bias-calc1}
	\end{align}
	Consequently, we have for $\denX -\denNX\in\regC \cap \regF_{\geq A^2\rho_{k,\operatorname{s}}^2\cdot \eta_k^2}$ that
	\begin{align}
				\wqfk{k}(\denX-\denNX) &\geq \wqf(\denX-\denNX) - \frac{\iwF[]{2}(k)}{\operatorname{s}^2(k)}  R^2\geq A^2\rho_{k,\operatorname{s}}^2\cdot  \eta_k^2- \frac{\iwF[]{2}(k)}{\operatorname{s}^2(k)} R^2.\label{eq::bias-calculation}
	\end{align}
	Let us upper bound the critical value $\tau_{k}(\gamma/2)$. Since $L_{\gamma} =  1- \log \gamma \geq 1$ for all $\gamma\in(0,1)$ and $L_{\gamma}$ is increasing in $\gamma$, we also have that $L_{\gamma/4},L_{\gamma/4}^{1/2}\leq L_{\gamma/8}$. Thus, it follows
	\begin{align*}
		 L_{\gamma/4} L^{1/2}_{\gamma/4}\leq {L_{\gamma/8}^{3/2}}.
	\end{align*}
	Additionally, because $\cstC[\denU]  \cstV[\denX|\denU] \leq \icstC[\denU]{2}  \icstVz[\denX|\denU]{}$
	we obtain for the critical value
	\begin{align}
		\tau_{k}(\gamma/2) &= \left( 18 \cstC[\denU]  \icstVz[\denNX|\denU]{}  + 138986 \frac{\sqrt{2k}}{n} \frac{L_{\gamma/4}}{\gamma} \right)  L_{\gamma/4}^{1/2} \frac{\sqrt{\delfour{k}}}{n} + 52 \cstV[\denNX|\denU]   \cstC[\denU]  L_{\gamma/4} \frac{\sqrt{\delinf{k}}}{n}\nonumber\\
		&\leq \left( 18  \frac{L_{\gamma/8}}{\gamma}\cstC[\denU]   \icstVz[\denNX|\denU]{}+ 138986\frac{\sqrt{2k}}{n} \frac{L_{\gamma/8}^{3/2}}{\gamma}\right)  \frac{\sqrt{\delfour{k}}}{n}+ 52 \cstV[\denNX|\denU]   \cstC[\denU]  \frac{L_{\gamma/8}}{\gamma} \frac{\sqrt{\delinf{k}}}{n}\nonumber\\
		&\leq \left(70  \icstVz[\denNX|\denU]{}   \cstC[\denU]  \frac{L_{\gamma/8}}{\gamma} + 138986\frac{\sqrt{2k}}{n} \frac{L_{\gamma/8}^{3/2}}{\gamma} \right) \left(\frac{\sqrt{\delfour{k}}}{n}\vee\frac{\sqrt{\delinf{k}}}{n} \right).\label{eq::bound-qt}
	\end{align}
	Using $A\geq A_\gamma$ for $A_\gamma$ defined in \cref{eq::A-def} and  $\operatorname{d} \eta_k^2\geq \frac{\sqrt{2k}}{n}$ , we get for  \cref{eq::bias-calculation} that 
	\begin{align*}
		&\wqfk{k}(\denX-\denNX) 
		\geq \left( R^2 + 140 \frac{L_{\gamma/8}}{\gamma} \cstC[\denU]\icstVz[\denNX|\denU]{} + 260 \frac{L_{\gamma/8}}{\gamma} \cstC[\denU]\operatorname{v} + 833934 \frac{L^{3/2}_{\gamma/8}}{\gamma}\eta_k^2 \operatorname{d}\right) \\
		&\quad\quad \quad \quad\quad \quad \qquad \cdot\left(\frac{\iwF[]{2}(k)}{\operatorname{s}^2(k)} \vee \frac{1}{n} \left(\sqrt{\delfour{k}} \vee \sqrt{\delinf{k}}\right)\right) -  \frac{\iwF[]{2}(k)}{\operatorname{s}^2(k)}  R^2\\
		 &\geq \left((2\cdot 70) \frac{L_{\gamma/8}}{\gamma} \cstC[\denU]\icstVz[\denNX|\denU]{}+ (2\cdot 18 + 2\cdot 112) \frac{L_{\gamma/8}}{\gamma} \cstC[\denU]\icstVz[\denX|\denU]{}+ (2+4) 138986\frac{L^{3/2}_{\gamma/8}}{\gamma} \frac{\sqrt{2k}}{n}\right)  \\
		&\quad\quad \quad \cdot \frac{1}{n} \left(\sqrt{\delfour{k}} \vee \sqrt{\delinf{k}}\right).\\
		 \intertext{Combining the calculation with the bound derived in \cref{eq::bound-qt} for the critical value $\tau_{k}(\gamma/2)$, we finally obtain}
		&\wqfk{k}(\denX-\denNX)  \geq 2\tau_k(\gamma/2) + 2\left(18\cstC[\denU] \icstVz[\denX\vert\denU]{} + 2\cdot 138986 \frac{L_{\gamma/8}}{\gamma} \frac{\sqrt{2k}}{n}\right) L_{\gamma/8}^{1/2} \frac{\sqrt{\vBo{k}}}{n} \nonumber\\
		&\quad\quad \quad + 2\cdot 112 \cstC[\denU] \cstV[\denX\vert\denU]\frac{L_{\gamma/8}}{\gamma} \frac{\sqrt{\mBo{k}}}{n},
	\end{align*}
	which shows the separation condition given in \cref{eq::bound-on-A} and completes the proof.
\end{proof}

Let us introduce a dimension parameter that realizes an optimal bias-variance trade-off and the corresponding radius:
\begin{align}\label{eq::parameter-opt}
	k^*_{\operatorname{s}} := k^*_{\operatorname{s}}(n) := \arg\min_{k\in\Nz} \rho^2_{k,\operatorname{s}}(n) \quad\text{ and }\quad \rho^2_{*,\operatorname{s}}:= \rho^2_{*,\operatorname{s}}(n) = \min_{k\in\Nz} \rho^2_{k,\operatorname{s}}(n).
\end{align}

The next result follows immediately from 
\cref{pr::upperbound-radius-testing} and, hence, we omit the proof.
\begin{co}\label{co::optimal-radius-testing}
	Under the assumptions of \cref{pr::upperbound-radius-testing} let $\gamma\in(0,1)$ and $A_\gamma$ as in \cref{eq::A-def}. For all $A\geq A_\gamma$ and $n\geq2$ we have
	\begin{align*}
		\mathcal{R} \left( \Delta_{k^*_{\operatorname{s}}, \gamma/2} \vert \regDv,\regC ,A^2\rho_{*,\operatorname{s}}^2\cdot \eta_{k^*_{\operatorname{s}}}^2 \right) \leq \gamma.
	\end{align*}
\end{co}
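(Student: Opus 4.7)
The plan is essentially a one-line substitution into \cref{pr::upperbound-radius-testing}, since the corollary is a direct specialization. I would proceed as follows.

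First, I would invoke \cref{pr::upperbound-radius-testing} verbatim with the particular choice $k = k^*_{\operatorname{s}}$, which is admissible because $k^*_{\operatorname{s}} \in \mathbb{N}$ by its definition in \cref{eq::parameter-opt} as the argmin over $\mathbb{N}$ (assuming the minimum is attained; if not, one works with a near-minimizer and takes a limiting argument, but the stated definition already implies attainment). All hypotheses of \cref{pr::upperbound-radius-testing} carry over unchanged, since they are formulated uniformly in $k$.

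Second, I would use the defining identity $\rho^2_{k^*_{\operatorname{s}},\operatorname{s}}(n) = \rho^2_{*,\operatorname{s}}(n)$ from \cref{eq::parameter-opt} to rewrite the separation radius in the conclusion of \cref{pr::upperbound-radius-testing}. Concretely, \cref{pr::upperbound-radius-testing} with $k = k^*_{\operatorname{s}}$ gives
\begin{align*}
\mathcal{R}\bigl(\Delta_{k^*_{\operatorname{s}},\gamma/2} \,\big|\, \regDv, \regC, A^2 \rho^2_{k^*_{\operatorname{s}},\operatorname{s}} \cdot \eta^2_{k^*_{\operatorname{s}}}\bigr) \leq \gamma,
\end{align*}
and substituting $\rho^2_{k^*_{\operatorname{s}},\operatorname{s}} = \rho^2_{*,\operatorname{s}}$ yields the claim. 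Note that $\eta^2_{k^*_{\operatorname{s}}}$ already appears in the stated conclusion and requires no further manipulation.

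There is essentially no obstacle here: the whole content of the corollary is the choice of the bandwidth $k^*_{\operatorname{s}}$ realizing the bias-variance trade-off inside the radius $\rho^2_{k,\operatorname{s}}(n) = \frac{\iwF[]{2}(k)}{\operatorname{s}^2(k)} \vee \frac{1}{n}\bigl(\sqrt{\delfour{k}} \vee \sqrt{\delinf{k}}\bigr)$. The only mild subtlety worth remarking on is the existence of the minimizer: since $\iwF[]{2}/\operatorname{s}^2$ tends to zero as $k \to \infty$ by \cref{ass::classes-testing}(ii), while the variance term $n^{-1}(\sqrt{\delfour{k}} \vee \sqrt{\delinf{k}})$ is non-decreasing in $k$ (both integrals and suprema are over expanding intervals), a minimizer over $\mathbb{N}$ exists for every fixed $n \geq 2$, so $k^*_{\operatorname{s}}$ is well defined and the argument above goes through without modification.
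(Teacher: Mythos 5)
Your proposal is correct and is exactly the one the paper has in mind: the authors explicitly omit the proof with the remark that the result ``follows immediately from \cref{pr::upperbound-radius-testing},'' and your specialization $k = k^*_{\operatorname{s}}$ together with the identity $\rho^2_{k^*_{\operatorname{s}},\operatorname{s}}(n) = \rho^2_{*,\operatorname{s}}(n)$ from \cref{eq::parameter-opt} is precisely that deduction. Your remark on the existence of the minimizer (bias term decaying, variance term non-decreasing) is a sensible sanity check but is not strictly required beyond what the definition in \cref{eq::parameter-opt} already asserts.
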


We note that \cref{co::optimal-radius-testing} derives $\rho_{*,\operatorname{s}}^2\cdot \eta_{k^*_{\operatorname{s}}}^2$ as an upper bound of the separation radius for any finite sample size $n\geq 2$. Typically, $\eta_{k^*_{\operatorname{s}}}^2$ is a deterioration which is asymptotically negligible. To be more precise, we study the asymptotic behaviour of this upper bound as the sample tends to infinity by determining the order of
	\begin{align*}
		\rho^2_{*,\operatorname{s}}(n)  = \min_{k\in\Nz} \left(\frac{\iwF[]{2}(k)}{\operatorname{s}^2(k)}\vee \frac{1}{n} \left(\sqrt{\delfour{k}} \vee \sqrt{\delinf{k}}\right)\right).
	\end{align*}
Following  \cite{Comte2025}, we consider different choices of regularity function $\operatorname{s}$, the density function $\iwF[]{2}$  and the order of the Mellin transform of the error density, which determine the order of $\delfour{k}$ and  $\delinf{k}$. 

For two real-valued functions $h_1,h_2\colon\Rz\rightarrow\Rz $ we write $h_1(t)\sim h_2(t)$ if there exist constants $C,\tilde{C}$ such that for all $t\in\Rz$ we have $h_1(t)\leq C h_2(t)$ and $h_2(t)\leq \tilde{C} h_1(t)$. We then call $h_1$ and $h_2$ of the same order.
The class $\regC$ defined in \cref{eq::reg-class} covers the usual assumptions on the regularity of the unknown density $\denX$, i.e., ordinary and super smooth densities. More precisely, in the \textit{ordinary smooth case} the regularity function $\operatorname{s}$ is of polynomial order, i.e., $\operatorname{s}(t) \sim  (1+t^2)^{s/2}$ for some $s\in\pRz$. In this case, $\regC$ corresponds to the Mellin-Sobolev space, see Definition 2.3.9 in \cite{BrennerMiguelDiss}. 
In the \textit{super smooth} case, $\operatorname{s}$ is assumed to be exponentially increasing, i.e., $\operatorname{s}(t) \sim \exp(\vert t\vert^s)$ for some $s\in\pRz$.

Recall that the quadratic functional $q^2(h)$ is finite for all elements $h$ in the nonparametric class of functions $\regC$ if $\wF / \operatorname{s}$  is bounded (\cref{ass::classes-testing} (ii)). Convergence of $\wF / \operatorname{s}$ against zero as $\vert t\vert \rightarrow \infty$ gives the rate for the bias term. 
Furthermore, we restrict ourselves to the case that $\iwF{2}(t) \sim (1+t^2)^{a}$ for $a\in\Rz$. This includes the examples given in \cref{il::example-parameters} with the following choices of $a$.
\begin{itemize}
		\item[(i)] In the case $a=0$ we have $\wF(t) = 1$ for all $t\in\Rz$. Recall that in this case $\wqf(f)$ equals a (weighted) $\Lp[2]{+}$-norm of the density $\denX$ itself.

		\item[(ii)] The case  $a=-1$ covers quadratic functional estimation of the survival function $S_f$ of $X$, more precisely, we have $\iwF{2}(t)= \frac{1}{(c-1)^2+4\pi t^2}$ for all $t\in\Rz$.

		\item[(iii)] The case $a={\beta\in\Nz}$ covers quadratic functionals of derivatives $D^\beta[\denX]$ of the density $\denX$. 

\end{itemize}
Note that, only in case of ordinary smoothness of the unknown density $\denX$, \cref{ass::classes-testing} (ii) imposes the additional condition $s>a$ on the parameters.

We consider in the following two cases for the behaviour of the Mellin transform of the error density $\denU$. We either assume for some decay parameter $\parM\in\pRz$  its \textit{(ordinary) smoothness}, i.e.,  $\vert \Mcg{c}{\denU}(t) \vert \sim (1+t^2)^{-\parM/2}$;
or its \textit{super smoothness}, i.e., $\vert \Mcg{c}{\denU}(t)  \vert \sim \exp( - \vert t\vert^{\sigma})$. Under these configurations standard calculations allow to derive the corresponding rates  given in \cref{tab:risks-general-testing}.

For the case that both densities are ordinary smooth (first line), we assume that $\sigma +a > -\frac14$. Let us remark that in the case of $\parM +a = -\frac14$ the term $\sqrt{\vBo{k}}\vee\sqrt{\mBo{k}} = \sqrt{\vBo{k}}$ is of order $\sqrt{\log k}$ and the corresponding rate of testing is of order $\sqrt{\log n}/{n}$. If the density function $\wF$ is even of smaller order, i.e., $\parM +a < -\frac14$, it holds that $\sqrt{\vBo{k}}\vee\sqrt{\mBo{k}}\sim 1$ and we achieve a parametric rate $n^{-1}$.
If the unknown density function is ordinary smooth and the error density is super smooth (second line in \cref{tab:risks-general-testing}), then, the optimal parameter $k^*_{\operatorname{s}}$ does not depend on the regularity parameter $s$. Hence, in this case the test $ \Delta_{k^*_{\operatorname{s}}}$ is automatically adaptive. 
	We shall emphasize that in all three examples the factor
	\begin{align*}
	\eta_{k^*_{\operatorname{s}}}^2 = 1\vee \frac{\sqrt{2k^*_{\operatorname{s}}}}{\operatorname{d}n}
	\end{align*}
	is of order one and, thus, the order of the radius of testing is indeed equal to the order of $\rho^2_{*,\operatorname{s}}(n) $.

\begin{table}[!ht]
	\centering
	\begin{tabular}{c c c c}
		\toprule
		$\operatorname{s}(t)$&$\vert \Mcg{c}{\denU}(t)  \vert $ & $k^*_{\operatorname{s}}$ & $\rho^2_{*,\operatorname{s}}(n) $ \\
		\midrule
		$(1+t^2)^{\frac{s}{2}}$ & $(1+t^2)^{-\frac{\parM}{2}}$ & $n^{\frac{2}{4s+4\parM +1}}$  & $n^{-\frac{4(s-a)}{4s+4\parM +1}}$\\
		$(1+t^2)^{\frac{s}{2}}$  & $\exp( - \vert t\vert^{\sigma})$&$(\log n)^{\frac{1}{\sigma}}$ & $(\log n)^{-\frac{2(s-a)}{\sigma}}$ \\
		$\exp(\vert t\vert^s)$& $(1+t^2)^{-\frac{\parM}{2}}$ &$(\log n)^{\frac{1}{s}}$  &$\frac{1}{n} (\log n)^{\frac{2(\parM+a)+1/2}{s}}$\\
		\bottomrule
	\end{tabular}
	\caption{Order of the radius of testing for $\iwF{2}(t) \sim (1+t^2)^{a}$.}
	\label{tab:risks-general-testing}
\end{table}

\section{Testing radius of a max-test}\label{sec::radius-max-test}
The performance of the test $ \Delta_{k, \gamma/2}$ in \cref{pr::upperbound-radius-testing} relies on the choice of the dimension parameter $k$ and its optimal selection in turn depends generally on the alternative class $\regC$, see \cref{tab:risks-general-testing}. A testing procedure is called adaptive, i.e., assumption-free, if it performs optimally for a wide range of alternatives. In this section, we therefore propose a data-driven testing procedure by aggregating the tests derived in the last section over various dimension parameters $k$ and discuss assumptions under which it performs (nearly) adaptive. In \cref{sec::data-driv-bon}, we first introduce the testing procedure and then derive in \cref{sec::max-rad} an upper bound for its radius of testing.

\subsection{\textit{Data-driven procedure via Bonferroni aggregation}}\label{sec::data-driv-bon}

We first recapitulate the main ideas of the Bonferroni aggregation. Let $\mathcal{K}\subseteq\Nz$ be a finite collection of dimension parameters. Recall that by \cref{pr::boundsquantiles} for $\alpha \in (0,1)$ each test in the following family 
\begin{align*}
	(\Delta_{k,\alpha/\vert \mathcal{K}\vert})_{k\in\mathcal{K}} := (\mathds{1}_{\{\eqfk{k}  \geq \tau_k(\alpha/\vert \mathcal{K}\vert) \}})_{k\in\mathcal{K}}
\end{align*}
has level $ \alpha/\vert \mathcal{K}\vert\in(0,1)$. We consider the max-test 
\begin{align}
	\Delta_{\mathcal{K},\alpha} := \mathds{1}_{\{\zeta_{\mathcal{K},\alpha} >0\}} \quad\text{ with }\quad \zeta_{\mathcal{K},\alpha} = \max_{k\in\mathcal{K}} \Delta_{k,\alpha/\vert \mathcal{K}\vert}\label{eq::max-test}
\end{align}
i.e. the test rejects the null hypothesis as soon as one of the tests in the collection does. Under the null hypothesis, we bound the type I error probability of the max-test by the sum of the error probabilities of the individual tests
\begin{align}\label{eq::type-one-error}
	\ipM[\denU,\denNX]{n}\left(  \Delta_{\mathcal{K},\alpha}  = 1\right) =\ipM[\denU,\denNX]{n}\left( \zeta_{\mathcal{K},\alpha} >0\right) \leq \sum_{k\in\mathcal{K}} \ipM[\denU,\denNX]{n} (\Delta_{k,\alpha/\vert \mathcal{K}\vert} = 1)\leq \sum_{k\in\mathcal{K}} \frac{\alpha}{\vert \mathcal{K}\vert} = \alpha.
\end{align}
Hence, $\Delta_{\mathcal{K},\alpha}$ is a level-$\alpha$-test. Under the alternative the type II error probability is bounded by the minimum of the error probabilities of the individual tests, i.e.
\begin{align}\label{eq::type-two-error}
	\ipM[\denU,\denX]{n}\left(  \Delta_{\mathcal{K},\alpha}  = 0\right) =\ipM[\denU,\denX]{n}\left( \zeta_{\mathcal{K},\alpha} \leq 0\right) \leq \min_{k\in\mathcal{K}}\ipM[\denU,\denX]{n} (\Delta_{k,\alpha/\vert \mathcal{K}\vert} = 0).
\end{align}
Therefore, $\Delta_{\mathcal{K},\alpha}$ has the maximal power achievable by a test in the collection. The bounds \cref{eq::type-one-error} and \cref{eq::type-two-error} have opposing effects on the choice of the collection $\mathcal{K}$. On the one hand, it should be as small as possible to keep the type I error probability small. On the other hand, it must be large enough to approximate an optimal dimension parameter for a wide range of alternative classes that we want to adapt to. In this work we consider a classical Bonferroni choice of an error level $\alpha/\vert \mathcal{K}\vert$. For other aggregation choices, e.g. Monte Carlo quantile and Monte Carlo threshold method, we refer to \cite{Baraud_2003} and \cite{Fromont_2006}. Although the Bonferroni choice is a more conservative method, its optimality is often shown, which is then also shared with the other methods.

Applying the Bonferroni choice to the level $\alpha/\vert \mathcal{K}\vert\in(0,1)$, the critical value in \cref{eq::quantiletau} writes
\begin{align*}
\tau_k(\alpha/\vert \mathcal{K}\vert) := 
	\left( 18 \cstC[\denU]  \icstVz[\denNX|\denU]{}  + 69493 \frac{\sqrt{2k}}{n} \frac{L_{\alpha/2\vert \mathcal{K}\vert}\vert \mathcal{K}\vert}{\alpha} \right)  L_{\alpha/2\vert \mathcal{K}\vert}^{1/2} \frac{\sqrt{\delfour{k}}}{n} + 52 \cstV[\denNX|\denU]   \cstC[\denU]  L_{\alpha/2\vert \mathcal{K}\vert} \frac{\sqrt{\delinf{k}}}{n}.
\end{align*}
We observe that the critical value and hence the separation condition \cref{eq::separationcond} in \cref{pr::boundsquantiles-alternative} increases at least by a factor $\vert \mathcal{K}\vert$ leading possibly to a deterioration of the radius of testing by this factor. This is known to be suboptimal. To find a sharper bound on the critical value, we impose in this section a stronger moment condition under the null hypothesis. That is, instead of \cref{eq::error-moments}, we assume in the results of this section the following.
\begin{ass}\label{ass::error-p-moments}
	For $p\geq 2$  let $\denU\in\Lp[\infty]{+}( \basMSy{2c-1})\cap\Lp[1]{+}(\basMSy{2p(c-1)})$ and $\denNX\in\Lp[1]{+}(\basMSy{2p(c-1)})$.
\end{ass}
We shall emphasize that the parameter $p$ given in \cref{ass::error-p-moments} is known under the null hypothesis since the error density is known. However, 
under \cref{ass::error-p-moments} , it follows that $\denNX\mc\denU\in\Lp[1]{+}(\basMSy{2p(c-1)})$ by \cref{lem::normineq} and we write shortly, 
\begin{align*}
\cstV[\denNX|\denU](p) := 	 \Vert\denNX\mc\denU\Vert_{\Lp[1]{+}(\basMSy{2p(c-1)})} \vee 1.
\end{align*}
Finally, for $\alpha\in (0,1)$ we introduce an updated critical value 
\begin{align}
	\tau_{k\vert\alpha} &:= 
	\left( 18 \cstC[\denU]  \cstV[\denNX|\denU](p)  + 69493 \frac{\sqrt{2k}\vert \mathcal{K}\vert^{1/(p-1)}}{n} \frac{L_{\alpha/2\vert\mathcal{K}\vert}^2}{\alpha} \right)  L_{\alpha/2\vert\mathcal{K}\vert}^{1/2} \frac{\sqrt{\delfour{k}}}{n}\nonumber\\
	&\qquad + 52 \cstV[\denNX|\denU]   \cstC[\denU]  L_{\alpha/2\vert\mathcal{K}\vert} \frac{\sqrt{\delinf{k}}}{n}.\label{eq::quantiletau-new}
\end{align}
Below  we generalize \cref{pr::boundsquantiles} to the updated critical value and, therewith, we show that each test in the following family 
\begin{align*}
	(\Delta_{k\vert \alpha})_{k\in\mathcal{K}} := (\mathds{1}_{\{\eqfk{k}  \geq \tau_{k\vert\alpha} \}})_{k\in\mathcal{K}}
\end{align*}
has level $ \alpha/\vert \mathcal{K}\vert\in(0,1)$. Afterwards, we consider the updated max-test 
\begin{align}
	\Delta_{\mathcal{K}\vert\alpha} := \mathds{1}_{\{\zeta_{\mathcal{K}\vert \alpha} >0\}} \quad\text{ with }\quad \zeta_{\mathcal{K}\vert \alpha} = \max_{k\in\mathcal{K}} \Delta_{k\vert \alpha}\label{eq::max-test-updated}
\end{align}
and derive an upper bound of its radius of testing.

\begin{pr}[Bound for the quantiles of $\etqfk{k}$ under the null hypothesis]\label{pr::boundsquantiles-updated}
	Let \cref{ass:well-definedness-testing,ass::error-well-defined,ass::error-p-moments} be satisfied and let $\alpha \in (0,1)$, $n\geq 2$, $k\in\Nz$ and $\mathcal{K}\subseteq\Nz$ be finite. 	Consider the estimator $\etqfk{k}$ and the threshold $\tau_{k\vert\alpha}$ defined in \cref{eq::estimatorq2kb} and \cref{eq::quantiletau-new}, respectively. Under the null hypothesis we have that 
	\begin{align*}
		\ipM[\denU, \denNX]{n}(\etqfk{k} \geq \tau_{k\vert\alpha}) \leq \frac{\alpha}{\vert\mathcal{K}\vert}.
	\end{align*}
\end{pr}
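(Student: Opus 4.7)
The plan is to mimic the proof of Proposition~\ref{pr::boundsquantiles} almost verbatim, the only change being that we invoke the U-statistic deviation lemma with the higher exponent $p$ made available by the strengthened moment condition, and at the Bonferroni-corrected error level $\alpha/\vert\mathcal{K}\vert$. First, under the null hypothesis we have $\denX = \denNX$, hence $\denY = \denNY$ and $\Mcg{c}{\denY} = \Mcg{c}{\denNY}$, so the centred linear statistic $W_{k}$ in \cref{eq::centeredlinearterm} vanishes and the bias $\wqfk{k}(\denX - \denNX)$ is zero. The decomposition \cref{eq::decompositiontest} therefore collapses to $\etqfk{k} = U_{k}$, reducing the task to a one-sided tail bound for the canonical U-statistic $U_k$.

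Next I would verify the integrability prerequisites of \cref{lem::quantil-ustat} at order $p$. Under \cref{ass::error-p-moments} both $\denU$ and $\denNX$ lie in $\Lp[1]{+}(\basMSy{2p(c-1)})$, so Lemma \ref{lem::normineq} yields $\denNX \mc \denU \in \Lp[1]{+}(\basMSy{2p(c-1)})$, which is exactly what is needed to control the $p$-th moment of the kernel of $U_k$; in particular the constant $\cstV[\denNX|\denU](p)$ is finite and $\cstC[\denU]$ remains finite from \cref{eq::constants2}. Applying \cref{lem::quantil-ustat} at exponent $p$ and with $\gamma = \alpha/\vert\mathcal{K}\vert$ then delivers a threshold $\tau^{U_k}(\alpha/\vert\mathcal{K}\vert)$ such that
\begin{align*}
\ipM[\denU,\denNX]{n}\bigl(U_k \geq \tau^{U_k}(\alpha/\vert\mathcal{K}\vert)\bigr) \leq \frac{\alpha}{\vert\mathcal{K}\vert}.
\end{align*}

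The final step is bookkeeping: substitute $\gamma = \alpha/\vert\mathcal{K}\vert$ into the generic quantile expression and identify the outcome with $\tau_{k\vert\alpha}$ from \cref{eq::quantiletau-new}. The sub-Gaussian-type contributions simply turn the factors $L_{\alpha/2}^{1/2}$ and $L_{\alpha/2}$ of \cref{eq::quantiletau} into $L_{\alpha/(2\vert\mathcal{K}\vert)}^{1/2}$ and $L_{\alpha/(2\vert\mathcal{K}\vert)}$. The critical polynomial term coming from the Bernstein/Rosenthal part of the U-statistic bound would, at level $\gamma$ and order $p$, scale like $\gamma^{-1/(p-1)}$; raising this to the appropriate power while absorbing excess logarithmic factors into $L_{\alpha/(2\vert\mathcal{K}\vert)}^2$ turns the naive Bonferroni blow-up $\vert\mathcal{K}\vert/\alpha$ into $\vert\mathcal{K}\vert^{1/(p-1)}L_{\alpha/(2\vert\mathcal{K}\vert)}^2/\alpha$, exactly as displayed in \cref{eq::quantiletau-new}. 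This matches $\tau_{k\vert\alpha}$ term by term and proves the claim.

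The main obstacle is purely technical: tracking the numerical constants in \cref{lem::quantil-ustat} when it is applied at general $p \geq 2$ rather than the $p=2$ case used in \cref{pr::boundsquantiles}, and checking that the $\vert\mathcal{K}\vert^{1/(p-1)}$ factor indeed emerges from Fuk--Nagaev/Rosenthal-type control of the $p$-th moment of the kernel of $U_k$. Once this correspondence is established, the rest of the argument is identical to the null-hypothesis part of \cref{pr::boundsquantiles}.
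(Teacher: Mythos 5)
Your proposal is correct and follows essentially the same route as the paper: under the null the decomposition collapses to $\etqfk{k}=U_k$, you invoke \cref{lem::quantil-ustat} at general $p$ with $\gamma=\alpha/\vert\mathcal{K}\vert$, and you compare $\tau^{U_k}(\alpha/\vert\mathcal{K}\vert)$ with $\tau_{k\vert\alpha}$. One small caveat on wording: the conclusion is not that $\tau^{U_k}(\alpha/\vert\mathcal{K}\vert)$ ``matches $\tau_{k\vert\alpha}$ term by term'' but that $\tau^{U_k}(\alpha/\vert\mathcal{K}\vert)\le\tau_{k\vert\alpha}$, using $(\alpha/\vert\mathcal{K}\vert)^{-1/(p-1)}=\alpha^{-1/(p-1)}\vert\mathcal{K}\vert^{1/(p-1)}\le\alpha^{-1}\vert\mathcal{K}\vert^{1/(p-1)}$ and $L_{\alpha/(2\vert\mathcal{K}\vert)}^{2-1/(p-1)}\le L_{\alpha/(2\vert\mathcal{K}\vert)}^{2}$ for $p\ge 2$; the displayed threshold $\tau_{k\vert\alpha}$ is deliberately a clean upper bound, so the conclusion follows from monotonicity of the tail probability, exactly as in the paper's proof.
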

\begin{proof}[Proof of \cref{pr::boundsquantiles-updated}]
	The proof follows along the same lines as the proof of \cref{pr::boundsquantiles}. If $\denX  = \denNX$ and, hence $\denY = \denNY$ and $\Mcg{c}{\denY} = \Mcg{c}{\denNY}$, the decomposition \cref{eq::decompositiontest} simplifies to $\etqfk{k} = U_{k}$. Due to \cref{ass::error-p-moments}, we have that $\denNX,\denU\in\Lp[1]{+}(\basMSy{2p(c-1)})$ and the assumptions of \cref{lem::quantil-ustat} are satisfied. We intend to apply \cref{lem::quantil-ustat} with $\gamma = \alpha/\vert\mathcal{K}\vert$.  Moreover, using the notations in \cref{eq::delfour-testing} and \cref{eq::delinf-testing} and the fact that for $p\geq 2$ it holds that $ \alpha^{-1/(p-1)} \leq \alpha^{-1}$ and $ L_{\alpha/2\vert\mathcal{K}\vert}^{2 - 1/(p-1)} \leq L_{\alpha/2\vert\mathcal{K}\vert}^2$, we have that 
	\begin{align*}
		\tau^{U_k}( \alpha/\vert\mathcal{K}\vert)  &= \left( 18 \cstV[\denNX|\denU](p) \cstC[\denU] + 69493   \frac{\sqrt{2k}\vert\mathcal{K}\vert^{1/(p-1)}}{n \alpha^{1/(p-1)}} L_{\alpha/2\vert\mathcal{K}\vert}^{2 - 1/(p-1)}  \right)\frac{\sqrt{\vBo{k}}}{n} L_{\alpha/2\vert\mathcal{K}\vert}^{1/2}\nonumber\\
		&\quad + 52 \cstC[\denU] \cstV[\denNX|\denU] \frac{\sqrt{\mBo{k}}}{n} L_{\alpha/2\vert\mathcal{K}\vert}\leq \tau_{k\vert\alpha}.
	\end{align*}
	 Thus, we obtain immediately from \cref{lem::quantil-ustat} that 
	 \begin{align*}
		\ipM[\denU, \denNX]{n}(\etqfk{k} \geq \tau_{k\vert\alpha}) \leq \ipM[\denU, \denNX]{n}(\etqfk{k} \geq 	\tau^{U_k}( \alpha/\vert\mathcal{K}\vert))\leq \frac{\alpha}{\vert\mathcal{K}\vert}.
	 \end{align*} 
	 This concludes the proof.
\end{proof}
In summary, due to \cref{pr::boundsquantiles-updated} and \cref{eq::type-one-error}, for any finite collection $\mathcal{K}\subset\Nz$ and any $\alpha\in(0,1)$ the max-test given in \cref{eq::max-test-updated} is of level $\alpha$.
In the next section, we derive an upper bound for its radius of testing.

\subsection{\textit{Testing radius of the max-test}}\label{sec::max-rad}

 Denote by $\mathcal{S}$ the set of regularity functions satisfying \cref{ass::classes-testing}. The set $\mathcal{S}$ characterizes the collection of alternatives $\{\regC: \operatorname{s}\in\mathcal{S}\}$ for which we analyse the power of the testing procedure simultaneously.
 The max-test only aggregates over a finite set $\mathcal{K}\subseteq\Nz$. We define the minimal achievable radius of testing for each $\operatorname{s}\in\mathcal{S}$ over the set $\mathcal{K}$  for each $x\in \pRz$ as
\begin{align*}
	\rho^2_{\mathcal{K}, \regs}(x) := \min_{k\in\mathcal{K}} \rho_{k,\regs}^2(x) \quad \text{with} \quad \rho_{k,\regs}^2 (x):= \frac{\iwF{2}(k)}{\operatorname{s}^2(k)} \vee \frac{1}{x} \left(\sqrt{\delfour{k}} \vee \sqrt{\delinf{k}}\right).
\end{align*}
Since $\rho^2_{*,\regs} =  \rho_{k^*_{\operatorname{s}},\regs}^2(n) $ is defined in \cref{eq::parameter-opt} as the minimum taken over $\Nz$ instead of $\mathcal{K}$, for each $n\in\Nz$ we always have $\rho^2_{\mathcal{K}, \regs}(n) \geq \rho^2_{\Nz, \regs}(n) = \rho_{*,\regs}^2$ and evidently $\rho^2_{\mathcal{K}, \regs}(n) = \rho^2_{\Nz, \regs}(n)$ whenever $k^*_{\operatorname{s}}\in\mathcal{K}$. Let us introduce the factor $\delta_\mathcal{K}:= (1 + \log \vert\mathcal{K}\vert)^{-1/2}\in(0,1)$. For each $\operatorname{s}\in\mathcal{S}$, $k\in\mathcal{K}$ and $n\in\Nz$ we set
\begin{align}
	\operatorname{r}^2_{k, \operatorname{s}} := \operatorname{r}^2_{k, \operatorname{s}} (n)&:= \frac{\iwF{2}(k)}{\operatorname{s}^2(k)} \vee \frac{1}{n} \left(\delta_\mathcal{K}^{-1}\sqrt{\delfour{k}} \vee \delta_\mathcal{K}^{-2}\sqrt{\delinf{k}}\right)\nonumber\\
	k_{\mathcal{K}, \operatorname{s}}:= k_{\mathcal{K}, \operatorname{s}} (n) &:= \arg\min_{k\in\mathcal{K}} \operatorname{r}^2_{k, \operatorname{s}} (n)\label{eq::choice-k}\\
	\operatorname{r}_{\mathcal{K}, \operatorname{s}}^2:= \operatorname{r}_{\mathcal{K}, \operatorname{s}}^2(n) &:=  \min_{k\in\mathcal{K}}\operatorname{r}^2_{k, \operatorname{s}} (n).\label{eq::max-radius}
\end{align}

Evidently, since $\delta_{\mathcal{K}}\in (0,1)$ we have
\begin{align*}
	\rho^2_{\mathcal{K}, \regs}(\delta_{\mathcal{K}}n) \leq \operatorname{r}_{\mathcal{K}, \operatorname{s}}^2(n)\leq \rho^2_{\mathcal{K}, \regs}(\delta_{\mathcal{K}}^2n).
\end{align*}

Provided that the collection $\mathcal{K}$ is chosen such that $\rho^2_{\mathcal{K}, \regs}(\delta_{\mathcal{K}}n) \asymp \rho^2_{\Nz, \regs}(\delta_{\mathcal{K}})$ and $\rho^2_{\mathcal{K}, \regs}(\delta_{\mathcal{K}}^2n)\asymp \rho^2_{\Nz, \regs}(\delta_{\mathcal{K}}n)$ in the best case the effective sample size is $\delta_\mathcal{K}n$ and in the worst case $\delta_\mathcal{K}^2n$. Consequently, $\delta_{\mathcal{K}}$ is also called adaptive factor.

In the next proposition we show for each $\operatorname{s}\in\mathcal{S}$,  by applying \cref{pr::boundsquantiles-updated,pr::boundsquantiles-alternative} with $\alpha = \beta = \gamma/2$, that both the type I and maximal type II error probability of the updated max-test $\Delta_{\mathcal{K}\vert\gamma/2}$ are bounded by $\gamma/2$ and thus their sum is bounded by $\gamma$. 

\begin{pr}[Uniform radius of testing over regularity class $\mathcal{S}$]\label{pr::uniform-testing-radius}
Let  \cref{ass:well-definedness-testing,ass::error-well-defined,ass::error-p-moments} be satisfied. For $\gamma \in (0,1)$ and $\operatorname{d}\in\Rz, \operatorname{d}\geq 1$ define
\begin{align*}
	A_\gamma^2 :=  R^2 + 140 \frac{L_{\gamma/8}}{\gamma} \cstC[\denU]\cstV[\denNX|\denU](p) + 260 \frac{L_{\gamma/8}}{\gamma} \cstC[\denU]\operatorname{v} + 833934 \frac{L^{5/2}_{\gamma/8}}{\gamma} \operatorname{d}.
\end{align*}
 For  $n\geq 2$, $\mathcal{K}\subseteq\Nz$ finite and $\operatorname{s}\in\mathcal{S}$ with $k_{\mathcal{K}, \operatorname{s}} \in\mathcal{K}$ as defined in \cref{eq::choice-k} set
\begin{align}\label{eq::as-prop-unif}
	\eta_{\mathcal{K},\regs}^2:= \eta_{\mathcal{K},\regs}^2(n) :=  1 \vee \frac{\sqrt{2k_{\mathcal{K}, \operatorname{s}} }\vert \mathcal{K}\vert^{1/(p-1)}}{ \operatorname{d}n \delta_{\mathcal{K}}^4 }.
\end{align}
Then, $\operatorname{r}_{\mathcal{K},\regs}^2$ as given in  \cref{eq::max-radius} satisfies for all $A\geq A_\gamma$ that
\begin{align*}
	\mathcal{R} \left( \Delta_{\mathcal{K}\vert\gamma/2}  \vert 	\regDv, \regC ,A^2\operatorname{r}_{\mathcal{K}, \operatorname{s}}^2\eta_{\mathcal{K},\regs}^2 \right) \leq \gamma.
\end{align*}
\end{pr}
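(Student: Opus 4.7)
The plan is to decompose the risk into type I and type II parts, handle each with a union bound, and on the type II side invoke \cref{pr::boundsquantiles-alternative} for the best member $k$ of the collection $\mathcal{K}$.

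For the type I error, combining the union bound argument that led to \cref{eq::type-one-error} with \cref{pr::boundsquantiles-updated} applied at level $\alpha=\gamma/2$ gives
\begin{align*}
\ipM[\denU,\denNX]{n}\bigl(\Delta_{\mathcal{K}\vert\gamma/2}=1\bigr)
\leq \sum_{k\in\mathcal{K}} \ipM[\denU,\denNX]{n}\bigl(\etqfk{k}\geq \tau_{k\vert\gamma/2}\bigr)
\leq \vert\mathcal{K}\vert\cdot\frac{\gamma/2}{\vert\mathcal{K}\vert}=\frac{\gamma}{2}.
\end{align*}
For the type II error, the analogue of \cref{eq::type-two-error} for the updated max-test yields, for every $\denX$ in the alternative,
\begin{align*}
\ipM[\denU,\denX]{n}\bigl(\Delta_{\mathcal{K}\vert\gamma/2}=0\bigr) \leq \min_{k\in\mathcal{K}}\ipM[\denU,\denX]{n}\bigl(\etqfk{k}<\tau_{k\vert\gamma/2}\bigr),
\end{align*}
so it suffices to bound the right-hand side by $\gamma/2$ for the single choice $k=k_{\mathcal{K},\operatorname{s}}\in\mathcal{K}$ from \cref{eq::choice-k}, by applying \cref{pr::boundsquantiles-alternative} with $\beta=\gamma/2$ and $\tau=\tau_{k\vert\gamma/2}$.

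The core task is then the verification of the separation condition \cref{eq::separationcond} at $k=k_{\mathcal{K},\operatorname{s}}$. This follows the same pattern as the proof of \cref{pr::upperbound-radius-testing}: using $\denX-\denNX\in\regC$ and the bias bound \cref{eq::bias-calc1} one obtains
\begin{align*}
\wqfk{k}(\denX-\denNX)\geq \wqf(\denX-\denNX)-\frac{\iwF[]{2}(k)}{\operatorname{s}^2(k)}R^2
\geq A^2\operatorname{r}_{\mathcal{K},\operatorname{s}}^2\eta_{\mathcal{K},\operatorname{s}}^2-\frac{\iwF[]{2}(k)}{\operatorname{s}^2(k)}R^2,
\end{align*}
whereas the Bonferroni-inflated critical value \cref{eq::quantiletau-new} and the extra noise terms appearing in \cref{eq::separationcond} are upper bounded by means of the elementary inequalities $L_{\gamma/(4\vert\mathcal{K}\vert)}^{1/2}\leq L_{\gamma/8}^{1/2}\delta_{\mathcal{K}}^{-1}$ and $L_{\gamma/(4\vert\mathcal{K}\vert)}\leq L_{\gamma/8}\delta_{\mathcal{K}}^{-2}$. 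These precisely reproduce the weighting $\delta_{\mathcal{K}}^{-1}\sqrt{\delfour{k}}\vee\delta_{\mathcal{K}}^{-2}\sqrt{\delinf{k}}$ that defines $\operatorname{r}^2_{k,\operatorname{s}}$, while the residual $\sqrt{2k}\vert\mathcal{K}\vert^{1/(p-1)}/n$ contribution coming from the U-statistic part of $\tau_{k\vert\gamma/2}$ is absorbed by $\operatorname{d}n\,\eta_{\mathcal{K},\operatorname{s}}^2$, by the very definition \cref{eq::as-prop-unif} of $\eta_{\mathcal{K},\operatorname{s}}^2$.

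The main obstacle is the careful bookkeeping of the different powers of $L_{\gamma/(4\vert\mathcal{K}\vert)}$ appearing in $\tau_{k\vert\gamma/2}$ (namely $L^{1/2}$, $L$ and $L^{5/2}$), and their translation into the matching adaptation factors $\delta_{\mathcal{K}}^{-1}$, $\delta_{\mathcal{K}}^{-2}$ and $\delta_{\mathcal{K}}^{-4}\vert\mathcal{K}\vert^{1/(p-1)}$ built into $\operatorname{r}_{\mathcal{K},\operatorname{s}}^2\,\eta_{\mathcal{K},\operatorname{s}}^2$. Once these order-of-magnitude inequalities are in place, the numerical constants $140$, $260$, $833934$ together with the promoted exponent $L_{\gamma/8}^{5/2}$ in $A_\gamma^2$ (instead of $L_{\gamma/8}^{3/2}$ as in \cref{pr::upperbound-radius-testing}) are chosen exactly so that the bias, the U-statistic variance, the linear-statistic variance and the Bonferroni overhead are absorbed simultaneously, yielding the desired separation and hence the claim.
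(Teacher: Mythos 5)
Your proposal is correct and follows essentially the same route as the paper's own proof: the same Type~I/Type~II split, the same application of \cref{pr::boundsquantiles-updated} and \cref{pr::boundsquantiles-alternative} at levels $\gamma/2$, the same reduction to verifying the separation condition \cref{eq::separationcond} at $k=k_{\mathcal{K},\operatorname{s}}$, and the same use of the bias bound together with $L_{\gamma/(4\vert\mathcal{K}\vert)}\leq L_{\gamma/4}\delta_{\mathcal{K}}^{-2}$ and the absorption of the $\sqrt{2k}\vert\mathcal{K}\vert^{1/(p-1)}/(n\delta_{\mathcal{K}}^4)$ term via $\operatorname{d}\,\eta_{\mathcal{K},\operatorname{s}}^2$. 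The only difference is that you leave the final numerical bookkeeping (matching $140$, $260$, $833934$ against $2\tau_{k^\circ\vert\gamma/2}$ plus the extra terms of \cref{eq::separationcond}, using $\operatorname{v}\geq\icstVz[\denX|\denU]{}$ and $\cstV[\denNX|\denU]\leq\cstV[\denNX|\denU](p)$) at the level of a sketch, whereas the paper carries it out explicitly; your outline is nonetheless an accurate description of exactly those steps.
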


\begin{proof}[Proof of \cref{pr::uniform-testing-radius}]
	Under the null hypothesis the claim follows for each $\operatorname{s}\in\mathcal{S}$ from \cref{eq::type-one-error} together with \cref{pr::boundsquantiles-updated} and $\alpha = \gamma /2$, that is,
	\begin{align*}
		\ipM[\denU,\denNX]{n}\left( \Delta_{k\vert\gamma/2}  = 1\right) = \ipM[\denU,\denNX]{n} \left( \eqfk{k}  \geq\tau_{k\vert\gamma/2}\right)\leq \frac{\gamma}{2\vert\mathcal{K}\vert}
	\end{align*}
	for all $k\in\mathcal{K}$, and $\sum_{k\in\mathcal{K}} \frac{\gamma}{2\vert\mathcal{K}\vert} = \gamma/2$. Thus, it holds
	\begin{align*}
		\ipM[\denU,\denNX]{n}\left( \Delta_{\mathcal{K}\vert\gamma/2}  = 1\right) \leq \frac{\gamma}{2}.
	\end{align*}
	If $\denX\in\regDv$ satisfies the separation condition  \cref{eq::separationcond}  for $\tau = \tau_{k^\circ\vert\gamma/2}$ for some $k^\circ\in\mathcal{K}$, then, the assumptions of \cref{pr::boundsquantiles-alternative} are fulfilled with $\beta = \gamma/2$ and it holds with \cref{eq::type-two-error} that  
	\begin{align*}
			\ipM[\denU,\denX]{n}\left( \Delta_{\mathcal{K}\vert\gamma/2}  = 0\right) \leq \min_{k\in\mathcal{K}} \ipM[\denU,\denX]{n}\left( \Delta_{k\vert\gamma/2}  = 0\right) \leq \ipM[\denU,\denX]{n}\left( \Delta_{k^\circ\vert\gamma/2}  = 0\right)\leq  \frac{\gamma}{2}.
	\end{align*}
	Consequently, the claim of the proposition follows, if for every element of the alternative, that is, $\denX\in\regDv$ with $\denX-\denNX\in\regC$ and $\wqf(\denX-\denNX)\geq A^2 \operatorname{r}_{\mathcal{K}, \operatorname{s}}^2\eta_{\mathcal{K},\regs}^2 $ for some $A^2\geq A_\gamma^2$, the the separation condition  \cref{eq::separationcond} is satisfied for $\tau = \tau_{k^\circ\vert\gamma/2}$ and $k^\circ := k_{\mathcal{K}, \operatorname{s}}  \in\mathcal{K}$ as defined in \cref{eq::choice-k}.
	Then, rewriting the separation condition \cref{eq::separationcond}, it remains to show that
	\begin{align}
			\wqfk{k^\circ }(\denX-\denNX)  &\geq 2\tau_{k^\circ\vert\gamma/2} +  2\left(18\cstC[\denU] \icstVz[\denX\vert\denU]{}   
			  + 2\cdot 138986 \frac{L_{\gamma/8}}{\gamma} \frac{\sqrt{2k^\circ}}{n}\right) L_{\gamma/8}^{1/2} \frac{\sqrt{\vBo{k^\circ}}}{n} \nonumber\\
			  &\quad+ 2\cdot 112 \cstC[\denU] \cstV[\denX\vert\denU]\frac{L_{\gamma/8}}{\gamma} \frac{\sqrt{\mBo{k^\circ}}}{n}.\label{eq::separationcond-this}
		\end{align}
		Since $\denX-\denNX\in\regC$, we have analogously to  \cref{eq::bias-calc1}
		\begin{align*}
			\wqf(\denX - \denNX)-\wqfk{k^\circ }(\denX-\denNX) \leq \frac{\iwF[]{2}(k^\circ )}{\operatorname{s}^2(k^\circ ) } R^2
		\end{align*}
		and, thus, exploiting $\wqf(\denX-\denNX)\geq A_\gamma^2 \operatorname{r}_{\mathcal{K}, \operatorname{s}}^2 \eta_{\mathcal{K},\regs}^2 $ we get
		\begin{align}
			\wqfk{k^\circ }(\denX-\denNX) &\geq \wqf(\denX - \denNX) -  \frac{\iwF[]{2}(k^\circ )}{\operatorname{s}^2(k^\circ )} R^2 \geq A_\gamma^2   \operatorname{r}_{\mathcal{K}, \operatorname{s}}^2\eta_{\mathcal{K},\regs}^2- \frac{\iwF[]{2}(k^\circ )}{\operatorname{s}^2(k^\circ ) } R^2.\label{eq::bias-bound-new}
		\end{align}
		Using $L_{\gamma/4}\geq 1$ it holds
		\begin{align*}
			L_{\gamma/(4\vert \mathcal{K}\vert)} &= 1 - \log(\gamma/(4\vert \mathcal{K}\vert)) = 1- \log (\gamma/4) + \log(\vert \mathcal{K}\vert)\\
			& \leq ( 1- \log (\gamma/4)) (1 + \log(\vert \mathcal{K}\vert)) = L_{\gamma/4} \delta_{\mathcal{K}}^{-2}.
		\end{align*}
		In addition, due to $p\geq 2$ it holds $2^{1/(p-1)}\leq 2$,
		\begin{align*}
			\frac{L_{\gamma/4}^{2} L^{1/2}_{\gamma/4}}{\gamma^{1/(p-1)}}\leq \frac{L_{\gamma/8}^{5/2}}{\gamma}.
		\end{align*}
		Also, we have $  \cstV[\denNX|\denU] \leq \cstV[\denNX|\denU] (p)$. Thus, we obtain for the critical value
		\begin{align}
			 \tau_{k^\circ\vert\gamma/2} &= 	\left( 18 \cstC[\denU]  \cstV[\denNX|\denU] (p) + 2\cdot 69493 \frac{\sqrt{2k^\circ}\vert \mathcal{K}\vert^{1/(p-1)}}{n} \frac{L_{\frac{\gamma}{4\vert\mathcal{K}\vert}}^2}{\gamma} \right)  L_{\frac{\gamma}{4\vert\mathcal{K}\vert}}^{1/2} \frac{\sqrt{\delfour{k^\circ}}}{n}\nonumber\\
			&\qquad + 52 \cstV[\denNX|\denU]   \cstC[\denU]  L_{\frac{\gamma}{4\vert\mathcal{K}\vert}} \frac{\sqrt{\delinf{k^\circ}}}{n}\nonumber\\
			&\leq \left( 18 \frac{L_{\gamma/8}}{\gamma} \cstC[\denU]  \cstV[\denNX|\denU] (p) + 138986 \frac{\sqrt{2k^\circ}\vert \mathcal{K}\vert^{1/(p-1)}}{n \delta_{\mathcal{K}}^4} \frac{L_{\gamma/8}^{5/2}}{\gamma} \right)   \frac{\sqrt{\delfour{k^\circ}}}{\delta_{\mathcal{K}}n}\nonumber\\
			&\qquad + 52 \cstV[\denNX|\denU]   \cstC[\denU]  \frac{L_{\gamma/8}}{\gamma}  \frac{\sqrt{\delinf{k^\circ}}}{n\delta_{\mathcal{K}}^2}\nonumber\\
			&\leq \left( 70\frac{L_{\gamma/8}}{\gamma}\cstC[\denU]  \cstV[\denNX|\denU] (p)  +  138986 \frac{\sqrt{2k^\circ}\vert \mathcal{K}\vert^{1/(p-1)}}{n \delta_{\mathcal{K}}^4} \frac{L_{\gamma/8}^{5/2}}{\gamma}  \right)  \frac{\delta_{\mathcal{K}}^{-1} \sqrt{\delfour{k^\circ}} \vee \delta_{\mathcal{K}}^{-2} \sqrt{\delinf{k^\circ}} }{n}.\label{eq::bound-tko}
		\end{align}
		Inserting in \cref{eq::bias-bound-new} the definition of $A_\gamma$ and using that $\operatorname{r}_{\mathcal{K}, \operatorname{s}} = r_{k^\circ, \operatorname{s}}$ and $ \eta_{\mathcal{K},\regs}^2\geq 1$ we get 
		\begin{align*}
			\wqfk{k^\circ }(\denX-\denNX) &\geq (  R^2 + 140 \frac{L_{\gamma/8}}{\gamma} \cstC[\denU]\cstV[\denNX|\denU](p) + 260 \frac{L_{\gamma/8}}{\gamma} \cstC[\denU]\operatorname{v} + 833934 \frac{L^{5/2}_{\gamma/8}}{\gamma} \operatorname{d}\eta_{\mathcal{K},\regs}^2)\\
			&\qquad \cdot \left( \frac{\iwF[]{2}(k^\circ )}{\operatorname{s}^2(k^\circ ) } \vee \frac{1}{n} \left(\delta_\mathcal{K}^{-1}\sqrt{\delfour{k^\circ}} \vee \delta_\mathcal{K}^{-2}\sqrt{\delinf{k^\circ}}\right) \right) -\frac{\iwF[]{2}(k^\circ )}{\operatorname{s}^2(k^\circ ) } R^2.
		\end{align*}
		Since $\operatorname{v}\geq  \icstVz[\denX|\denU]{}$ for $f\in\regDv$ and  $\frac{\sqrt{2k^\circ} \vert \mathcal{K}\vert^{1/(p-1)}}{\delta_{\mathcal{K}}^4 n}\leq \operatorname{d}  \eta_{\mathcal{K},\regs}^2$ it follows
		\begin{align*}
			\wqfk{k^\circ }(\denX-\denNX) &\geq  \left((2\cdot 70) \frac{L_{\gamma/8}}{\gamma} \cstC[\denU]\cstV[\denNX|\denU](p) + (2\cdot 18 + 2\cdot 112) \frac{L_{\gamma/8}}{\gamma} \icstVz[\denX|\denU]{}\cstC[\denU]\right.\\
			&\qquad \left. +(2+4) 138986\frac{L^{5/2}_{\gamma/8}}{\gamma} \frac{\sqrt{2k^\circ} \vert \mathcal{K}\vert^{1/(p-1)}}{\delta_{\mathcal{K}}^4 n}\right) 
		 	\cdot \frac{1}{n} \left(\delta_\mathcal{K}^{-1}\sqrt{\delfour{k^\circ}} \vee \delta_\mathcal{K}^{-2}\sqrt{\delinf{k^\circ}}\right).
		\end{align*}
		Finally, using  the lower bound of the quantile $ \tau_{k^\circ\vert\gamma/2}$ derived in \cref{eq::bound-tko}, $\frac{\vert\mathcal{K}\vert^{1/(p-1)}}{\delta_{\mathcal{K}}^4}\geq 1$ and $\delta_{\mathcal{K}}\in(0,1)$  we obtain \cref{eq::separationcond-this} and this completes the proof.
\end{proof}
Let us remark that \cref{pr::uniform-testing-radius} yields $\operatorname{r}_{\mathcal{K}, \operatorname{s}}^2\cdot \ \eta_{\mathcal{K},\regs}^2$ as an upper bound for the radius of testing for any finite sample size $n\geq 2$. Similarly, to \cref{co::optimal-radius-testing} the term $ \eta_{\mathcal{K},\regs}^2(n)$ is a deterioration which is typically asymptotically negligible as $n$ tends to infinity for each $\operatorname{s}\in\mathcal{S}$.
Before establishing in which cases this is fulfilled, we first discuss the rate $\operatorname{r}_{\mathcal{K}, \operatorname{s}}^2(n)$ and the impact of its deviations from $\rho^2_{*,\operatorname{s}}(n) $ as well as choices for the collection $\mathcal{K}$.

\paragraph{Naive choice of $\mathcal{K}$}
For $\operatorname{r}_{\mathcal{K}, \operatorname{s}}^2(n)$, we minimize over the collection $\mathcal{K}$ instead of $\Nz$. Ideally, $\mathcal{K}\subseteq\Nz$ is chosen such that its elements approximate the optimal parameter $k^*_{\operatorname{s}}$ given in \cref{eq::parameter-opt} sufficiently well for all $\operatorname{s}\in\mathcal{S}$. For the examples considered in \cref{sec::uperbound-radius-testing}, we have that $k^*_{\operatorname{s}}\leq n^2$ for $n\in\Nz$ large enough, refer to \cref{tab:risks-general-testing}. Hence, a naive choice is $\mathcal{K}_o := \nset{n^2}$ with $\vert \mathcal{K}_o \vert = n^2$, which yields a factor $\delta_{\mathcal{K}_o}^{-1}= (1 + \log \vert\mathcal{K}_o\vert)^{1/2}$ of order $(\log n)^{1/2}$. Denote here ${k}_o:= k_{\mathcal{K}_o, \operatorname{s}} (n)$ and
\begin{align*}
	V_{\mathcal{K}_o, \operatorname{s}}(n) := \frac{1}{n} \left(\delta_{\mathcal{K}_o}^{-1}\sqrt{\delfour{{k}_o}} \vee \delta_{\mathcal{K}_o}^{-2}\sqrt{\delinf{{k}_o}}\right).
\end{align*}
The resulting rates for the examples presented in \cref{sec::uperbound-radius-testing} are given in \cref{tab:risks-general-choice-K2}.  Further assume from now on that $\parM +a > -\frac14$ for the (o.s.-o.s.) case (first line).  Note that in the example of both densities to be ordinarily smooth (first line) and the case of super smoothness of $\operatorname{s}$ and ordinary smoothness of the error density (third line), the rate worsens by a logarithmic factor, while the case of an ordinary smooth regularity $\operatorname{s}$ and super smoothness of the error density (second line), the rate stays the same as in the non-adaptive case, see \cref{tab:risks-general-testing}. Consequently, we will not consider this case any further in this section. In the fourth column $\kappa\in\pRz$ is some constant depending on $a$ and $\sigma$. However, the exponential term dominates. 
	\begin{table}[!ht]
	\centering
		\begin{tabular}{@{}c c c c c c@{}}
			\toprule
			$\operatorname{s}(t)$&$\vert \Mcg{c}{\denU}(t)  \vert $& $\frac{\iwF{2}(k)}{\operatorname{s}^2(k)}$& $V_{\mathcal{K}_o, \operatorname{s}}(n) $ &$k_o = k_{\mathcal{K}_o , \operatorname{s}}(n)$ &	$\operatorname{r}_{\mathcal{K}_o, \operatorname{s}}^2(n)$\\
			\midrule
			$(1+t^2)^{\frac{s}{2}}$ & $(1+t^2)^{-\frac{\parM}{2}}$ & $k^{-2(s-a)}$ & $\frac{\log n}{n}{k}_o^{2(\parM+a) + \frac{1}{2}}$  &$\left(\frac{n}{\log n}\right)^{\frac{2}{4s+4\parM +1}}$& $\left(\frac{n}{\log n}\right)^{\frac{-4(s-a)}{4s+4\parM +1}}$\\
			$(1+t^2)^{\frac{s}{2}}$  & $\exp( - \vert t\vert^{\sigma})$ & $k^{-2(s-a)}$& $\frac{n}{\log n}{k}_o^{\kappa}\exp(2{k}_o^\sigma)$ &$(\log n)^{\frac{1}{\sigma}}$ & $(\log n)^{-\frac{2(s-a)}{\sigma}}$\\
			$\exp(\vert t\vert^s)$& $(1+t^2)^{-\frac{\parM}{2}}$ & $\exp(2k^s)$ &$\frac{n}{\log n}{k}_o^{2(\parM+a) + \frac{1}{2}}$ & 
			$(\log n)^{\frac{1}{s}}$  &$\frac{\log(n)}{n} (\log n)^{\frac{2(\parM+a)+1/2}{s}}$\\
			\bottomrule
		\end{tabular}
		\caption{Order of the radius of testing for  $\iwF{2}(t)\sim (1+t^2)^{a}$.}
		\label{tab:risks-general-choice-K2}
	\end{table}

\paragraph{Optimized choice of $\mathcal{K}$}
In most cases, a finer grid than  $\mathcal{K}=\nset{n^2}$ approximates the minimization over $\Nz$ well enough. 
More precisely, we consider a minimization over a geometric grid 
\begin{align*}
	\mathcal{K}_g := \{1\}\cup \{2^j, j\in\nset{\log(n^2)}\}. 
\end{align*}
Choosing $\mathcal{K}=\mathcal{K}_g$ results in an adaptive factor $\delta_{\mathcal{K}_g}^{-1}$ of oder $(\log \log n)^{1/2}$. Denote
\begin{align*}
	\operatorname{r}^2_{\mathcal{K}_g, 1} (n) := \arg\min_{k\in\mathcal{K}_g}  \left(\frac{\iwF{2}(k)}{\operatorname{s}^2(k)} \vee \frac{1}{n}\delta_{\mathcal{K}_g}^{-1}\sqrt{\delfour{k}}\right),\\
	\operatorname{r}^2_{\mathcal{K}_g, 2} (n) := \arg\min_{k\in\mathcal{K}_g}  \left(\frac{\iwF{2}(k)}{\operatorname{s}^2(k)} \vee \frac{1}{n}\delta_{\mathcal{K}_g}^{-2}\sqrt{\delinf{k}}\right).
\end{align*}
Then, it holds that $\operatorname{r}^2_{\mathcal{K}_g, \operatorname{s}} (n) = \operatorname{r}^2_{\mathcal{K}_g, 1} (n)\vee \operatorname{r}^2_{\mathcal{K}_g, 2} (n)$, by Lemma A.2.1 in \cite{SchluttenhoferDiss}, since $\frac{\iwF{2}(k)}{\operatorname{s}^2(k)}$ is by \cref{ass::classes-testing}  non-increasing and both $\frac{1}{n}\delta_{\mathcal{K}_g}^{-1}\sqrt{\delfour{k}}$ and $\frac{1}{n}\delta_{\mathcal{K}_g}^{-2}\sqrt{\delinf{k}}$ are non-decreasing in $k$.
\cref{tab:radius-Kg} gives for $\mathcal{K}_g$ the order of both $\operatorname{r}^2_{\mathcal{K}_g, 1}(n)$ and $\operatorname{r}^2_{\mathcal{K}_g, 2} (n) $, for the case that the Mellin transform of the error density is ordinarily smooth and the density $\denX$ ordinarily smooth (first line) or super smooth (second line).

\begin{table}[!ht]
		\centering
		\begin{tabular}{c c c c}
			\toprule
			$\operatorname{s}(t)$&$\vert \Mcg{c}{\denU}(t)  \vert $ &$\operatorname{r}^2_{\mathcal{K}_g, 1} (n)$ & $\operatorname{r}^2_{\mathcal{K}_g, 2} (n)$ \\
			\midrule
			$(1+t^2)^{\frac{s}{2}}$ & $(1+t^2)^{-\frac{\parM}{2}}$  &$\left(\frac{n}{(\log \log n)^{1/2}}\right)^{-\frac{4(s-a)}{4s+4\parM +1}}$  & $\left(\frac{n}{\log \log n}\right)^{-\frac{4(s-a)}{4s+4\parM }}$\\
			$\exp(\vert t\vert^s)$& $(1+t^2)^{-\frac{\parM}{2}}$  & $\frac{(\log \log n)^{1/2}}{n} (\log n)^{\frac{2(\parM+a)+1/2}{s}}$ & $\frac{\log\log n}{n} (\log n)^{\frac{2(\parM+a)}{s}}$\\
			\bottomrule
		\end{tabular}
		\caption{Order of the radius of testing $\iwF{2}(t)\sim (1+t^2)^{a}$.}
		\label{tab:radius-Kg}
\end{table}

Note, that $\operatorname{r}^2_{\mathcal{K}_g, 2} (n)$ is asymptotically negligible compared with $\operatorname{r}^2_{\mathcal{K}_g, 1} (n)$. Hence, the upper bound in \cref{pr::uniform-testing-radius} asymptotically reduces to $\operatorname{r}^2_{\mathcal{K}_g, 1} (n)$ for both the first and the third line.

If the unknown density is super smooth and the error density is ordinary smooth, the smaller geometric collection 
 \begin{align*}
	 \mathcal{K}_m = \{1\} \cup  \{2^j, j\in\nset{m^{-1}\log \log n}\},
 \end{align*}
 for $m>0$ is still sufficient. Then, the adaptive factor becomes of order 
$(\log \log \log n)^{1/2}$. The corresponding rates are given in \cref{tab:radius-Km}.  Again, the term $\operatorname{r}^2_{\mathcal{K}_m, 2} (n)$ is asymptotically negligible compared with $\operatorname{r}^2_{\mathcal{K}_m, 1} (n)$ and the upper bound in \cref{pr::uniform-testing-radius} asymptotically reduces to $\operatorname{r}^2_{\mathcal{K}_m, 1} (n)$. 

\begin{table}[!ht]
		\centering
		\begin{tabular}{c c c c }
			\toprule
			$\operatorname{s}(t)$&$\vert \Mcg{c}{\denU}(t)  \vert $ &$\operatorname{r}^2_{\mathcal{K}_m, 1} (n)$ & $\operatorname{r}^2_{\mathcal{K}_m, 2} (n)$ \\
			\midrule
			$\exp(\vert t\vert^s)$& $(1+t^2)^{-\frac{\parM}{2}}$ &$\frac{(\log \log \log n)^{1/2}}{n}(\log n)^{\frac{2(\parM+a)+1/2}{s}}$  &$\frac{\log \log \log n}{n}(\log n)^{\frac{2(\parM +a)}{s}}$\\
			\bottomrule
		\end{tabular}
		\caption{Order of the radius of testing $\iwF{2}(t)\sim (1+t^2)^{a}$.}
		\label{tab:radius-Km}
\end{table}

For more details on the derivation of the rates in \cref{tab:radius-Kg,tab:radius-Km}, we refer to the calculations of Illustration 4.3.6. in \cite{SchluttenhoferDiss}, where analogous rates are calculated for a circular convolution model.

\paragraph{Order of factor $\eta_{\mathcal{K}\vert n}^2$}
Let us discuss under which assumptions the factor $\eta_{\mathcal{K}\vert n}^2$ is of order one for the given examples such that $\operatorname{r}^2_{\mathcal{K}, \operatorname{s}} (n)$ describes indeed the order of the radius of testing. First note that in all discussed collections it holds $\vert\mathcal{K}\vert \leq n^2$ and thus $\delta_{\mathcal{K}}^{-4}\leq (\log n)^2$. Let us start with 
the case that the Mellin transform of the error density is ordinarily smooth and the density $\denX$ super smooth (second line of \cref{tab:risks-general-choice-K2}). In this case, it holds that
\begin{align*}
	 \frac{\sqrt{2k_{\mathcal{K}, \operatorname{s}} (n)}\vert \mathcal{K}\vert^{1/(p-1)}}{ \operatorname{d}n \delta_{\mathcal{K}}^4 } & \lesssim \frac{n^{\frac{1}{4s+4\parM +1}}n^{\frac{2}{(p-1)}}}{n (\log n)^{-2}}.
\end{align*}
This is of order less or equal to one whenever 
\begin{align*}
	\frac{2}{p-1} -1 + \frac{1}{4s+4\parM +1} <0.
\end{align*}
For example, if $\frac14 < \parM + s$ and $p\geq 5$, this is satisfied. Consequently, if the regularity parameters $\sigma$ and $s$ are sufficiently large and the moment condition \cref{ass::error-p-moments} is satisfied for $\denU$ and $\denNX$, then the parameter $\eta_{\mathcal{K}\vert n}^2$ is of order one.
For the other two examples, line one and three of \cref{tab:risks-general-choice-K2}, we have that $\sqrt{k_{\mathcal{K}, \operatorname{s}}(n)}$ is of logarithmic order in $n$, and, consequently, $p\geq 5$ is sufficient for $\eta_{\mathcal{K}\vert n}^2\sim 1$ to hold, as well. That is, the radius of testing is of order as given in \cref{tab:risks-general-choice-K2,tab:radius-Kg,tab:radius-Km}, depending on the choice of $\mathcal{K}$.

\section{Simulation study}\label{sec::sim-testing}
In this section, we illustrate the behaviour of the test $\Delta_{k,\alpha}$ presented in \cref{eq::test-stat2} and the data-driven max-test $\Delta_{\mathcal{K}\vert\alpha} $ defined in \cref{eq::max-test-updated}.

To do so we consider as null hypothesis the case of an super-smooth null hypothesis density and ordinarily smooth error density $\denU$. Specifically, we set the null hypothesis to be log-normally distributed with parameters $\mu  =0, \sigma^2=1$, i.e., for $x\in\pRz$
\begin{align*} 
    \denNX(x)&= \frac{1}{\sqrt{2\pi x^2}} \exp(-\log(x)^2/2)
\end{align*}
and $U$ being Pareto-distributed, i.e.
\begin{align*}
      \denU(x)&= \mathds{1}_{(1, \infty)}(x)x^{-2}.
\end{align*}
For the random variable $X$ we look both at the case that the null hypothesis is satisfied, i.e., $\denX = \denNX$, and, at the case where the density of $X$ belongs to the alternative. More precisely, we consider for $X$ the two cases
\begin{align*}
    \denX_1(x)&= \denNX(x),\\
    \denX_2(x)&=2x\mathds{1}_{(0,1)}(x).
\end{align*}
Setting $c=0.5$, the corresponding Mellin-transforms are given by
\begin{align*}
    \Mcg{\frac12}{\denX_1}(t)&= 2/(1.5 + 2\pi it),\\
    \Mcg{\frac12}{\denX_2}(t)&= \exp\left((-0.5+2\pi i t)^2 /2\right),\\
    \Mcg{\frac12}{\denU}(t)&=(1.5-2\pi it)^{-1}.
\end{align*}
We refer again to \cite{BrennerMiguelDiss} for more details on Mellin transforms. Let $\wF(t)=1$ for $t\in\Rz$. The separation of the density to the null hypothesis is obviously zero for $\denX_1$ , that is $\wqf(\denX_1-\denNX) = 0$. For the second density $\denX_2$ we have $\wqf(\denX_2-\denNX) = 0.5$.

Below, we show the results of our simulation study in Figure \ref{fig::testing-null} for the first example, $\denX = \denX_1$, and in Figure \ref{fig::testing-alternative} for second example, $\denX = \denX_2$. The boxplots of the values of the test statistic $\eqfk{k}$ are depicted over 50 iterations for $k\in\{0.5,0.6,\ldots,1.4\}$ and $n\in\{100,500\}$.
The examples were chosen such that the densities and their Mellin transforms have a simple representation. However, the bias is for $k \leq 1.4 $ already too small to see variation in the estimates. Minimizing over a subset of $\Nz$, we would always choose $k=1$ for the given sample sizes. Only immensely higher sample sizes would lead to a different output. Thus, we rescaled the set of dimension parameters on the interval $[0.5,1.4]$. 

In both Figures  \ref{fig::testing-null} and \ref{fig::testing-alternative}, the triangles indicate different estimates for $\alpha$-quantiles for $\alpha =0.1$. The dark blue triangle indicates the quantile $\tau_k(0.1)$ proposed in \cref{eq::test-stat2} and the light blue one the corresponding quantile $\tau_{k\vert 0.1}$ for the max-test defined in \cref{eq::quantiletau-new} for $\mathcal{K}= \{0.5,0.6,\ldots,1.4\}$. Note that since $L_{\gamma/(4\vert \mathcal{K}\vert)} \leq L_{\gamma/4} \delta_{\mathcal{K}}^{-2}$ (see proof of \cref{pr::uniform-testing-radius}), these theoretical quantiles differ by $\delta_{\mathcal{K}}^{-1} = (1 + \log(10))^{1/2}\approx 1.82$ up to constants. In dark brown, the empirical $0.1$-quantiles under the null hypothesis are indicated. 
The light brown triangles give the empirical quantiles multiplied by the factor $\delta_{\mathcal{K}}^{-1}$ to give an empirical quantile for the max-test. The numbers in brackets in the legend give the rejection rate of the corresponding max-tests for each example.

As already mentioned,  $n= \{100,500\}$ are relatively small sample sizes and the bias is relatively small compared to the variance. Consequently, we decide to use a value equal to $0.6$ for the constants of both  $\tau_k(0.1)$ and $\tau_{k\vert 0.1}$. The effect of this can be seen in the boundaries of the interval considered for parameter $k$: Under the null hypothesis in Figure \ref{fig::testing-null} we note that the test does not adhere to the significance level of $0.1$ at the boundaries. Choosing the theoretical constants instead, the test would always reject for these examples. 

However, close to $k=1$ the theoretical quantiles appear to match the empirical quantiles. Looking at the alternative in Figure \ref{fig::testing-alternative}, the separation appears to be large enough such that the test has high power already for a sample size of $n=500$.

  \begin{figure}[!ht]
    \centering
    \begin{subfigure}[b]{1\textwidth}
        \centering
        \includegraphics[width=\textwidth]{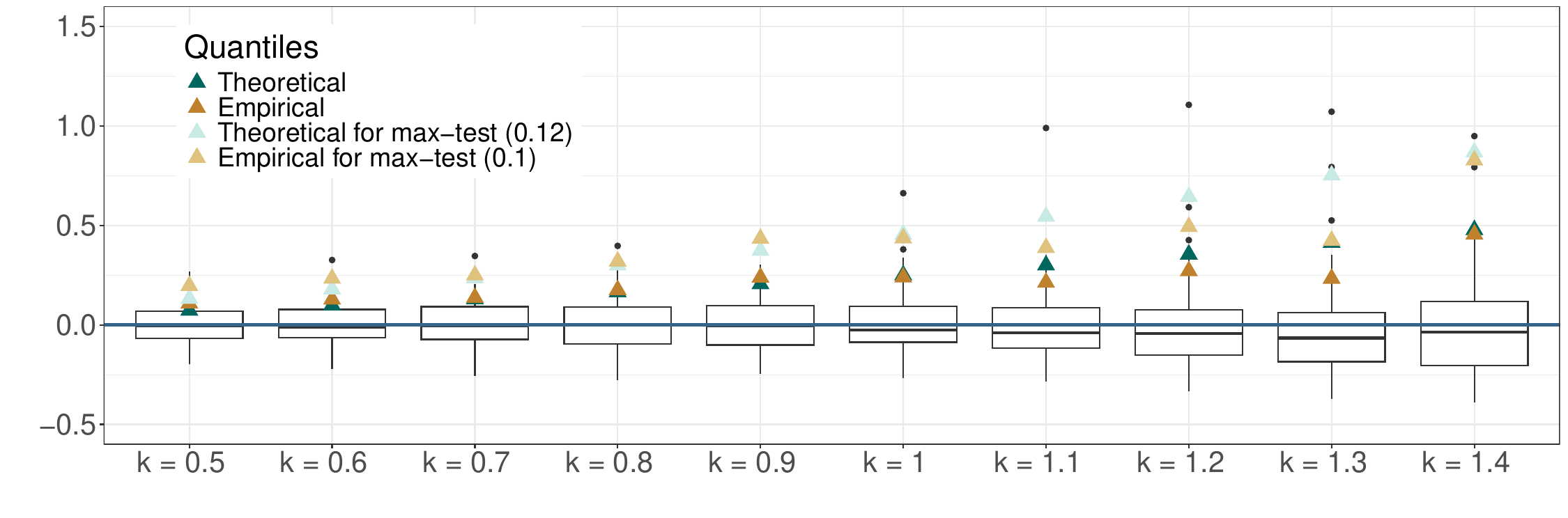}
        \caption{Example 1, $n=100$}
        \label{fig:sub1-test}
    \end{subfigure}
    \begin{subfigure}[b]{1\textwidth}
        \centering
        \includegraphics[width=\textwidth]{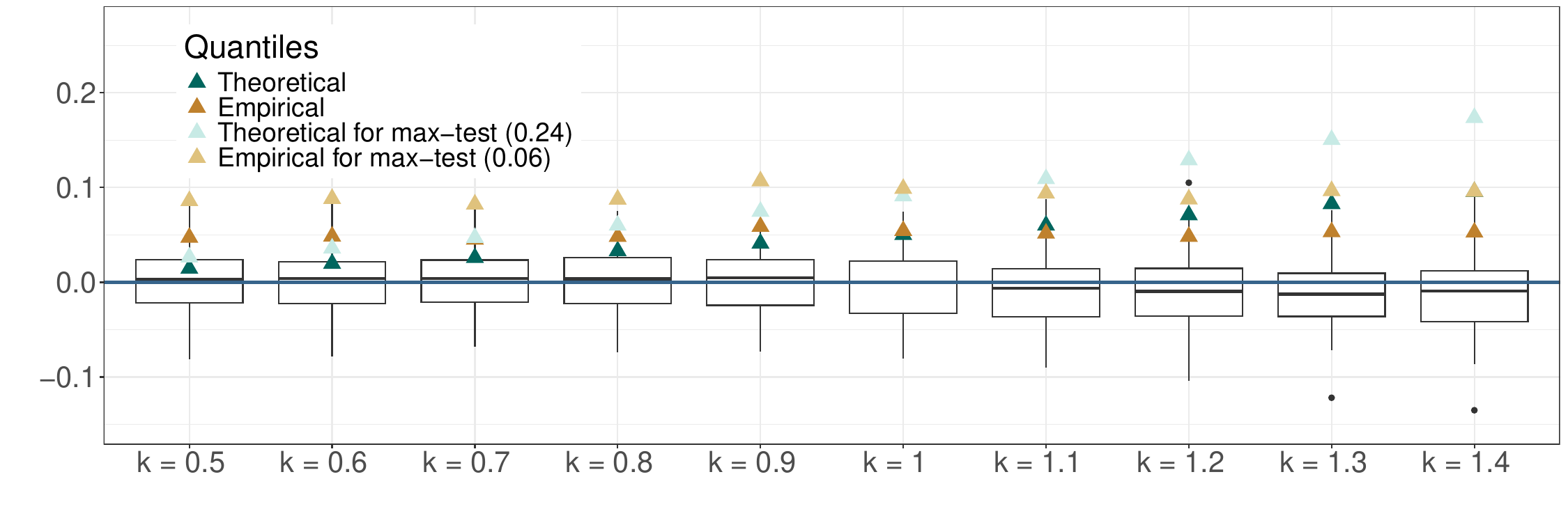}
        \caption{Example 1, $n=500$}
        \label{fig:sub2-test}
    \end{subfigure}\\
    \caption{The boxplots represent the values of $\eqfk{k}$ for $\denX = \denX_1$ over 50 iterations. The horizontal lines indicate $\wqf = 0$. The triangles indicate different estimates for $0.1$-quantiles. The numbers given in the legend indicate the rejection rate of the corresponding test.}
    \label{fig::testing-null}
\end{figure}

\begin{figure}[!ht]
   
    \centering
    \begin{subfigure}[b]{1\textwidth}
        \centering
        \includegraphics[width=\textwidth]{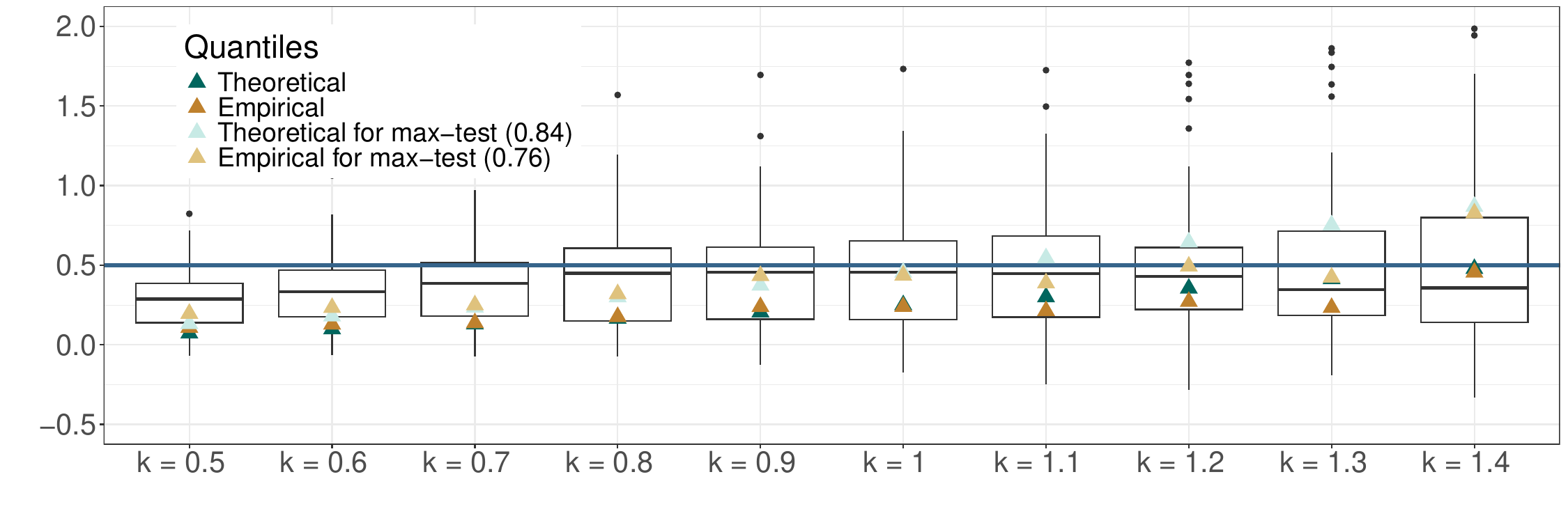}
        \caption{Example 2, $n=100$}
        \label{fig:sub1-test2}
    \end{subfigure}
    \begin{subfigure}[b]{1\textwidth}
        \centering
        \includegraphics[width=\textwidth]{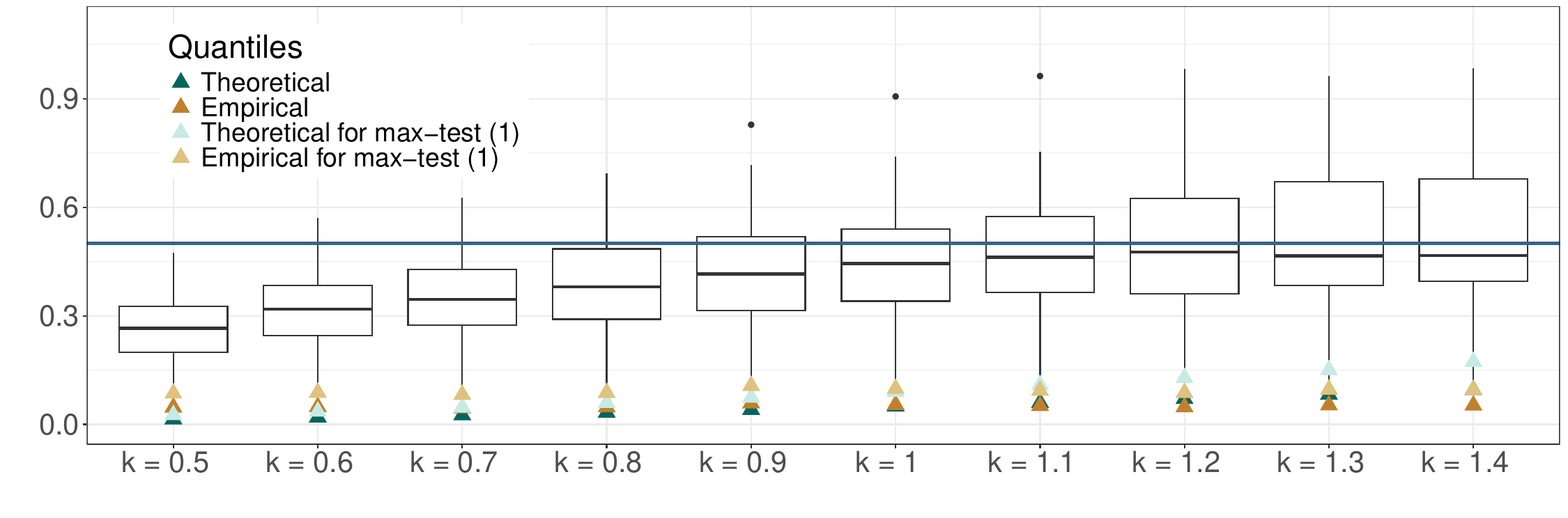}
        \caption{Example 2, $n=500$}
        \label{fig:sub2-test2}
    \end{subfigure}\\
    \caption{The boxplots represent the values of $\eqfk{k}$ for $\denX = \denX_2$ over 50 iterations. The horizontal lines indicate $\wqf = 0.5$. The triangles indicate different estimates for $0.1$-quantiles. The numbers given in the legend indicate the rejection rate of the corresponding test.}
    \label{fig::testing-alternative}
\end{figure}

\pagebreak

\paragraph{Acknowlegdegments} This work is funded by Deutsche Forschungsgemeinschaft (DFG, German Research Foundation) under Germany’s Excellence Strategy EXC-2181/1-39090098 (the Heidelberg STRUCTURES Cluster of Excellence).

\paragraph{Disclosure statement} The authors report there are no competing interests to declare.

\bibliography{AITFMC_bib} 

\appendix
\setcounter{subsection}{0}
\section*{Appendix}
\numberwithin{equation}{subsection}  
\renewcommand{\thesubsection}{\Alph{subsection}}
\renewcommand{\theco}{\Alph{subsection}.\arabic{co}}
\numberwithin{co}{subsection}
\renewcommand{\thelem}{\Alph{subsection}.\arabic{lem}}
\numberwithin{lem}{subsection}
\renewcommand{\therem}{\Alph{subsection}.\arabic{rem}}
\numberwithin{rem}{subsection}
\renewcommand{\thepr}{\Alph{subsection}.\arabic{pr}}
\numberwithin{pr}{subsection}
\subsection{Auxiliary results}\label{a::auxiliary}

In this section, we collect results from the literature used in the proofs of this work.
We start by reviewing some properties of U-statistics. These results hold for random variables on any measurable space, in particular for  $(\pRz, \psB)$.

For $(X_j)_{j\in\llbracket n \rrbracket}$, $n\in\Nz$ independent and identically distributed random variables in  $(\pRz, \psB)$ and symmetric kernel $h\colon \pRz \times \pRz \rightarrow \Rz$, a  U-statistic is a sum of the form
\begin{align*}
	U_n = \frac1{n(n-1)}\sum_{\substack{j_1\not= j_2\\ j_1,j_2 \in\llbracket n\rrbracket }}  h(X_{j_1}, X_{j_2}).
\end{align*}
The U-statistic is called canonical if for all $i,j\in\llbracket n\rrbracket$ and all $x,y\in \pRz$
\begin{align*}
	\E\left[h(X_i, y)\right] = 	\E\left[h(x, X_j)\right] = 0. 
\end{align*}

The next assertion, a concentration inequality for canonical U-statistics, provides our key argument in order to control the deviation of the test statistics.  It is a reformulation of Theorem 3.4.8 in \cite{Gine2016}. 
We use the notation $\Vert \cdot \Vert_{\Lp[\infty]{+,2}}$ to indicate the essential supremum for functions on $\pRz \times \pRz$. 

\begin{pr}\label{pr::Ustatconcentration}
	Let $U_n$ be a canonical U-statistic for $(X_j)_{j\in\llbracket n \rrbracket}$, $n\geq 2$ i.i.d. $\pRz$-valued random variables and kernel $h\colon \pRz\times \pRz \rightarrow \Rz$ bounded and symmetric. Let
	\begin{align*}
		&A\geq \Vert h\Vert_{\Lp[\infty]{+,2}}\\
		&B^2\geq \Vert \E[h^2(X_1, \cdot)] \Vert_{\Lp[\infty]{+}}\\
		&C^2 \geq\E[h^2(X_1,X_2)]\\
		&D \geq  \sup \{\E[h(X_1,X_2)\xi(X_1)\zeta(X_2)]: \E[\xi^2(X_1)]\leq 1, \E[\zeta^2(X_2)]\leq 1\}.
	\end{align*}
	Then, for all $x\geq 0$
	\begin{align*}
		\ipM[]{}\left(U_n \geq 8 \frac Cn x^{1/2} + 13 \frac Dn x + 261\frac{B}{n^{3/2}}x^{\frac32} + 343 \frac{A}{n^2} x^2\right) \leq \exp(1- x).
	\end{align*}
\end{pr}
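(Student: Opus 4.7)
The statement is explicitly presented as a reformulation of Theorem~3.4.8 in \cite{Gine2016}, so the plan is not to reprove the inequality from scratch but to deduce it as a direct corollary. The cited theorem is the classical Houdré--Reynaud-Bouret/Giné--Latala--Zinn moment/deviation inequality for degenerate (canonical) U-statistics of order two: for a canonical kernel $h$ with finite sup-norm, the tail of $U_n$ is controlled by four terms, each corresponding to one of the four natural ``norms'' $A,B,C,D$ of the kernel measuring, respectively, its essential supremum, its conditional $L^2$-norm, its $L^2$-norm and its operator norm viewed as a contraction on $L^2(\ipM_{X_1})\times L^2(\ipM_{X_2})$.

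The first step is to align notation. I would rewrite the quantities $A,B,C,D$ in the statement in the form in which they appear in \cite{Gine2016}, and verify that the four quantities indeed upper bound the corresponding ones in the reference: $A$ is the sup-norm bound, $C^2$ is the second moment, $B^2$ is the essential supremum of $x\mapsto\Ex[h^2(X_1,x)]$ (with symmetry of $h$ removing the distinction between the two coordinates), and $D$ is the supremum over unit balls in the two $L^2$-spaces that coincides, by Cauchy--Schwarz and the spectral representation of the associated Hilbert--Schmidt operator, with the operator norm of the integral operator with kernel $h$. Symmetry and canonicity of $h$, together with the i.i.d.\ assumption, guarantee that Theorem~3.4.8 applies without further structural hypotheses.

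The second step is to insert the four quantities into the tail bound of the cited theorem and collect the constants. The reference delivers an estimate of the shape
\begin{align*}
\ipM\!\left(U_n\geq \kappa_1\tfrac{C}{n}x^{1/2}+\kappa_2\tfrac{D}{n}x+\kappa_3\tfrac{B}{n^{3/2}}x^{3/2}+\kappa_4\tfrac{A}{n^2}x^2\right)\leq K\exp(-x)
\end{align*}
for explicit numerical constants $\kappa_1,\ldots,\kappa_4,K$. Tracking these constants through the reformulation gives the claimed values $8,13,261,343$; the prefactor $\exp(1-x)$ in place of $\exp(-x)$ absorbs any leftover multiplicative constant $K$, since $K\leq e$ for the version used in \cite{Gine2016}, which justifies the $+1$ in the exponent.

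The main obstacle is purely bookkeeping: extracting the four explicit numerical constants in the exact form stated here requires carefully comparing the normalisation conventions in \cite{Gine2016} (where the U-statistic is sometimes written with $\binom{n}{2}$ rather than $n(n-1)$, and the constants are sometimes stated for $|U_n|$ rather than $U_n$) with those used in the present paper. Once this matching is done, the inequality follows immediately; there is no additional probabilistic content to prove.
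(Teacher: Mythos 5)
The paper provides no proof of its own for this proposition; it is stated and used as a direct reformulation of Theorem~3.4.8 in \cite{Gine2016} (the Houdr\'e--Reynaud-Bouret / Gin\'e--Latala--Zinn concentration inequality for canonical second-order U-statistics), with the citation standing in for a proof. Your proposal takes essentially the same route — defer to that reference, identify $A,B,C,D$ with the four kernel norms appearing there, and track the numerical constants and the $\exp(1-x)$ prefactor through the change of normalisation — so it matches the paper's approach in both substance and level of completeness.
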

We bound the variance of an U-statistic by the following adaption of Theorem 3, Chapter 1,  in \cite{Lee1990}.

\begin{lem}\label{lem::var-bound-testing}
	Let $U_n$ be a U-statistic for $(X_j)_{j\in\llbracket n \rrbracket}$, $n\geq 2$ i.i.d. $\pRz$-valued random variables. Then
	\begin{align*}
		\operatorname{Var}(U_n) \leq \frac{2}{n(n-1)}\E^2[\vert h(X_1,X_2)\vert^2].
	\end{align*}
\end{lem}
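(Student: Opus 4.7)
My plan is to derive the bound by a direct second-moment computation, following the classical variance expansion for U-statistics. First I would write out the square and use linearity to obtain
\begin{align*}
    \operatorname{Var}(U_n) = \frac{1}{n^2(n-1)^2}\sum_{\substack{j_1\neq j_2\\ k_1\neq k_2}} \operatorname{Cov}\bigl(h(X_{j_1},X_{j_2}),\,h(X_{k_1},X_{k_2})\bigr),
\end{align*}
and then partition the index-quadruples by the overlap size $|\{j_1,j_2\}\cap\{k_1,k_2\}|\in\{0,1,2\}$.

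The terms with zero overlap contribute nothing: the two kernel evaluations are functions of disjoint independent samples, so their covariance vanishes. The terms with full overlap number $2n(n-1)$ after accounting for the two orderings $(k_1,k_2)\in\{(j_1,j_2),(j_2,j_1)\}$ of the shared pair and the symmetry of $h$, and each contributes $\operatorname{Var}(h(X_1,X_2))\leq \E[|h(X_1,X_2)|^2]$. Summing just these two contributions already produces exactly the announced bound $\tfrac{2}{n(n-1)}\E[|h(X_1,X_2)|^2]$.

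The main obstacle will be the overlap-$1$ terms: there are $4n(n-1)(n-2)$ of them, each equal to the covariance $\zeta_1 := \operatorname{Cov}(h(X_1,X_2),h(X_1,X_3))$. For the stated bound to hold with the displayed constant, one genuinely needs $\zeta_1=0$, i.e.\ the U-statistic to be \emph{canonical} in the sense $\E[h(X_1,X_2)\mid X_1]=0$ almost surely. In the applications in the paper $U_n=U_k$ is canonical by construction, being built from the centred factors $\cfun{Y_j}{t}=\Xtilde{Y_j}{t}-\E[\Xtilde{Y}{t}]$ (see \cref{eq::canonicalustat}), so the overlap-$1$ contribution vanishes and the inequality becomes an equality. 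I would therefore state the lemma implicitly in the canonical regime, after which the three-case expansion above delivers the result in a few lines; if one wanted a fully general bound, the overlap-$1$ block could be absorbed via Cauchy--Schwarz ($\zeta_1\le \E[|h|^2]$), but only at the cost of an extra $O(1/n)$ term in the constant.
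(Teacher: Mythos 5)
Your direct variance expansion is correct and is the standard elementary route; the paper itself gives no proof, citing Lee (1990) instead, so there is nothing to compare against at the level of technique. More importantly, your closing observation about the overlap-$1$ block is exactly right and is worth spelling out: as literally stated, the lemma is \emph{false} for non-canonical kernels and $n\geq 3$. For example, with $h(x,y)=x+y$ and $X_i$ i.i.d.\ centred with unit variance one has $U_n=\tfrac{2}{n}\sum_j X_j$, so $\operatorname{Var}(U_n)=4/n$, which exceeds $\tfrac{2}{n(n-1)}\E[|h(X_1,X_2)|^2]=4/(n(n-1))$ whenever $n\geq3$. The canonical (degeneracy) assumption $\E[h(X_1,\cdot)]=0$ is therefore not optional but essential, and the paper silently relies on it: the only place the lemma is invoked is with the kernel $h_k^u$, which is centred in each argument by construction, and there the bound is in fact an equality $\operatorname{Var}(U_n)=\tfrac{2}{n(n-1)}\E[|h(X_1,X_2)|^2]$ (cf.\ the equality sign in the display following \cref{lem::var-bound-testing} inside the proof of \cref{lem::quantil-unbounded-ustat}). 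So your reconstruction proves the statement the paper actually needs, and flags a hypothesis the paper should have stated explicitly.
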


Now, we turn to  norm inequalities for multiplicative convolution. The next result can be found in \cite{Comte2025}.
\begin{lem}\label{lem::normineq}
		\begin{itemize}
			\item[(i)] Let $h_1, h_2\in \Lp[1]{+}(\basMSy{2c-2})$ for $c\in\Rz$. Then $h_1 * h_2 \in \Lp[1]{+}(\basMSy{2c-2})$ with $\Vert h_1 * h_2 \Vert_{{\Lp[1]{+}(\basMSy{2c-2})}} \leq \Vert h_1 \Vert_{{\Lp[1]{+}(\basMSy{2c-2})}} \Vert h_2 \Vert_{{\Lp[1]{+}(\basMSy{2c-2})}} $. If $h_1$ and $h_2$ are probability densities, equality holds.
			\item[(ii)] Let $h_1, h_2\in \Lp[1]{+}(\basMSy{c-1})$ and $h_2\in \Lp[2]{+}(\basMSy{2c-1})$ for  $c\in\Rz$. Then $h_1 * h_2 \in  \Lp[2]{+}(\basMSy{2c-1})$ with $\Vert h_1 * h_2 \Vert_{{\Lp[2]{+}(\basMSy{2c-1})}} \leq \Vert h_1 \Vert_{{\Lp[1]{+}(\basMSy{c-1})}} \Vert h_2 \Vert_{{\Lp[2]{+}(\basMSy{2c-1})}} $.
			\item[(iii)] Let $h_1, h_2\in \Lp[\infty]{+}(\basMSy{2c-1})$ and $h_2 \in \Lp[1]{+}(\basMSy{2c-2})$ for  $c\in\Rz$. Then $h_1 * h_2 \in \Lp[\infty]{+}(\basMSy{2c-1})$ with $\Vert h_1 * h_2 \Vert_{{\Lp[\infty]{+}(\basMSy{2c-1})}} \leq \Vert h_1 \Vert_{{\Lp[\infty]{+}(\basMSy{2c-1})}} \Vert h_2 \Vert_{{\Lp[1]{+}(\basMSy{2c-2})}} $.
			\item[(iv)] If $f\in\Lp[1]{+}({\basMSy{2c-2}})$ and $\varphi\in\Lp[\infty]{+}( \basMSy{2c-1})\cap \Lp[1]{+}({\basMSy{2c-2}})$ are probability densities and $ g := f* \varphi$, then we have
	\begin{align}\label{eq::mm3}
		\Vert g \Vert_{\Lp[\infty]{+}( \basMSy{2c-1})} \leq \Vert  f \Vert_{\Lp[1]{+}({\basMSy{2c-2}})} \Vert \varphi \Vert_{\Lp[\infty]{+}( \basMSy{2c-1})}  =  \frac{\Vert \varphi \Vert_{\Lp[\infty]{+}( \basMSy{2c-1})}}{\Vert  \varphi \Vert_{\Lp[1]{+}({\basMSy{2c-2}})}} \Vert  g \Vert_{\Lp[1]{+}({\basMSy{2c-2}})}.
	\end{align}
		\end{itemize}
\end{lem}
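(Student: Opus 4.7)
The lemma collects four weighted Young-type inequalities for the multiplicative convolution $(h_1 * h_2)(y)=\int_{\pRz} h_1(x) h_2(y/x) x^{-1} d\pLm(x)$. The cleanest uniform approach is to reduce to classical additive Young's inequality via the logarithmic substitution $s = \log x$: the map $M_c h(s) := e^{cs} h(e^s)$ is an isometry from $\Lp[p]{+}(\basMSy{pc-1})$ onto the ordinary $L^p(\Rz)$ and intertwines multiplicative convolution on $\pRz$ with the usual additive convolution on $\Rz$. Given this, (i)--(iii) are instances of the standard $L^p$--$L^q$ Young's inequality. I would, however, present the proofs directly, because the bookkeeping of the weight exponents is more transparent that way.

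For (i), Fubini--Tonelli and the inner substitution $u = y/x$ (with $dy = x\,du$ and $y^{2c-2} = x^{2c-2} u^{2c-2}$) collapses the iterated integral to $\int |h_1(x)| x^{2c-2}\,dx \cdot \int |h_2(u)| u^{2c-2}\,du$; the factor $x^{-1}$ from the definition of $*$ and the Jacobian $x$ exactly cancel. Equality for probability densities is immediate, either by dropping the absolute values (the integrands are already non-negative) or by the probabilistic identity $\E[(XU)^{2c-2}] = \E[X^{2c-2}]\E[U^{2c-2}]$ from independence.

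For (ii), apply Minkowski's integral inequality to push the $\Lp[2]{+}(\basMSy{2c-1})$-norm inside the convolution,
\begin{align*}
\|h_1 * h_2\|_{\Lp[2]{+}(\basMSy{2c-1})} \leq \int_{\pRz} |h_1(x)| \, \|h_2(\cdot/x)\|_{\Lp[2]{+}(\basMSy{2c-1})} \, x^{-1} dx,
\end{align*}
and evaluate the inner norm via the same substitution $u = y/x$, which gives $\|h_2(\cdot/x)\|_{\Lp[2]{+}(\basMSy{2c-1})} = x^c \|h_2\|_{\Lp[2]{+}(\basMSy{2c-1})}$; the remaining outer integral equals $\|h_1\|_{\Lp[1]{+}(\basMSy{c-1})}$. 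For (iii), once the convention $\|h\|_{\Lp[\infty]{+}(\basMSy{2c-1})} = \operatorname{ess\,sup}_{x>0} |h(x)| \, x^{2c-1}$ is adopted (this is what places $\Lp[\infty]{+}(\basMSy{2c-1})$ consistently at the endpoint of the Mellin scale), the pointwise bound $|h_1(x)| \leq \|h_1\|_{\Lp[\infty]{+}(\basMSy{2c-1})} x^{-(2c-1)}$ inserted into $|(h_1 * h_2)(y)| \leq \int |h_1(x)||h_2(y/x)| x^{-1}dx$ and multiplied through by $y^{2c-1}$ makes all powers of $y$ cancel after the substitution $u = y/x$, leaving exactly $\|h_2\|_{\Lp[1]{+}(\basMSy{2c-2})}$.

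Part (iv) follows by combining (iii) with the equality case of (i). Since $f$ and $\varphi$ are probability densities, so is $g = f * \varphi$, and (i) gives $\|g\|_{\Lp[1]{+}(\basMSy{2c-2})} = \|f\|_{\Lp[1]{+}(\basMSy{2c-2})} \, \|\varphi\|_{\Lp[1]{+}(\basMSy{2c-2})}$; solving for $\|f\|_{\Lp[1]{+}(\basMSy{2c-2})}$ and inserting this into the right-hand side of (iii) produces the claimed identity. The only genuinely delicate point throughout is isolating the correct weighted-$L^\infty$ convention so that (iii) balances with the stated weight on $h_2$; once that is fixed, the remaining work is routine change of variables plus Fubini.
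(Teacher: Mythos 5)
The paper does not prove this lemma itself; it simply states that the result ``can be found in \cite{Comte2025}'' and uses it without further argument. So there is no in-paper proof to compare against. Your argument is nonetheless correct, and both of the routes you sketch are sound.

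Your direct computations check out. In (i), Tonelli plus $u=y/x$ separates the double integral, with the $x^{-1}$ from the definition of $\mc$ cancelling against the Jacobian $x$, and the equality claim for densities is just the multiplicativity of moments under independence, $\Ex[][(XU)^{2c-2}]=\Ex[][X^{2c-2}]\,\Ex[][U^{2c-2}]$. In (ii), Minkowski's integral inequality combined with the scaling identity $\Vert h_2(\cdot/x)\Vert_{\Lp[2]{+}(\basMSy{2c-1})}=x^{c}\Vert h_2\Vert_{\Lp[2]{+}(\basMSy{2c-1})}$ gives exactly the stated bound. In (iii) you correctly isolate the one genuinely non-obvious point, namely that the paper's $\Lp[\infty]{+}(\basMSy{2c-1})$-norm must be read as the \emph{weighted} essential supremum $\operatorname{ess\,sup}_{x>0}\vert h(x)\vert\,x^{2c-1}$ rather than the ordinary $\Lp[\infty]{}$-norm with respect to the measure $\basMSy{2c-1}\pLm$ (which would be weight-independent); this is the convention needed to make (iii) and (iv) balance, and it is also the one the paper implicitly uses elsewhere (e.g. $\Vert\mathds{1}_{[-k,k]}/\Mcg{c}{\denU}\Vert_{\Lp[\infty]{}(\wF)}$ and \cref{lem::Mellinprops}(ii)). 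With that convention, inserting the pointwise bound $\vert h_1(y/u)\vert (y/u)^{2c-1}\leq\Vert h_1\Vert_{\Lp[\infty]{+}(\basMSy{2c-1})}$ into the convolution integral after the substitution $u=y/x$ gives (iii). Part (iv) then follows by chaining (iii) with the equality case of (i), exactly as you describe.

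Your opening remark about the logarithmic substitution is a nice unifying perspective: the map $h\mapsto e^{c's}h(e^s)$ is an isometry from $\Lp[p]{+}(\basMSy{pc'-1})$ onto $\Lp[p]{}(\Rz)$ and intertwines $\mc$ with additive convolution, and one can check that (i), (ii), (iii) become the classical Young inequalities $L^1\star L^1\subset L^1$, $L^1\star L^2\subset L^2$, $L^\infty\star L^1\subset L^\infty$ under the choices $c'=2c-1$, $c'=c$, $c'=2c-1$, respectively (the last again under the weighted-$L^\infty$ reading). This buys a single black-box argument at the cost of juggling the exponent $c'$; your direct computations are more explicit about where the weights go, which is indeed the more transparent presentation here.
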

	Next, we recall properties of the Mellin transform, which we use in the sequel without further reference. See \cite{Comte2025} for the proof of the following result.
	\begin{lem}\label{lem::Mellinprops}
	\begin{itemize}
	\item[(i)] It holds for any $h\in\Lp[1]{+}(\basMSy{c-1})\cap \Lp[2]{+}(\basMSy{2c-1})$ that
	\begin{align*}
		\overline{\Mcg{c}{h}}(t)	= \Mcg{c}{h}(-t) , \quad \forall t\in\Rz
	\end{align*}
	and, analogously, for $H\in\Lp[1]{}\cap\Lp[2]{}$ it holds that 
	\begin{align*}
		\overline{\Mcgi{c}{H}}(x) = \Mcgi{c}{H(-\cdot)}(x), \quad \forall x\in\pRz.
	\end{align*}
	\item[(ii)] Let $g\in\Lp[\infty]{+}(\basMSy{2c-1})$, then it holds for $\mathds{1}_{[-k,k]}h\in \Lp[2]{} $ that 
	\begin{align*}
		\Ex[g] \left[ \left\vert\int_{-k}^k Y^{c-1+ 2\pi it} h(t)  dt \right\vert^2 \right] \leq  \Vert g \Vert_{\Lp[\infty]{+}( \basMSy{2c-1})} \Vert  \mathds{1}_{[-k,k]} h \Vert_{\Lp[2]{}}^2
	\end{align*}
	\item[(iii)] For $g\in\Lp[\infty]{+}(\basMSy{2c-1})$ and $h\in\Lp[2]{+}(g)$ we have that
	\begin{align*}
		\int_{\Rz} \left\vert \Ex[g] \left[ Y^{c-1+2\pi it} h(Y) \right]\right\vert^2 dt \leq \Vert g \Vert_{\Lp[\infty]{+}( \basMSy{2c-1})}  \Ex[g] [h^2(Y)].
	\end{align*}
	\end{itemize}
\end{lem}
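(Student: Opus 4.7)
The plan is to prove each of the three claims by direct computation, exploiting the explicit integral representations of $\Mg{c}$ and $\Mgi{c}$ together with Plancherel. For part (i), I would take the complex conjugate inside the integral defining $\Mcg{c}{h}(t)$: since any $h$ arising as a Lebesgue density on $\pRz$ is real-valued, only the kernel $x^{c-1+2\pi i t}$ is affected by conjugation, whose conjugate is $x^{c-1-2\pi i t}$. Recognising the resulting integral as $\Mcg{c}{h}(-t)$ gives the first identity, and the same conjugation argument applied to the defining integral of $\Mcgi{c}{H}(x)$ yields the analogous statement $\overline{\Mcgi{c}{H}}(x)=\Mcgi{c}{H(-\cdot)}(x)$.

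For part (ii), after pulling out the factor $y^{c-1}$ from $y^{c-1+2\pi i t}$, the inner integral $\int_{-k}^{k} e^{2\pi i t \log y} h(t)\, dt$ is the Fourier transform of $\mathds{1}_{[-k,k]} h$ evaluated at $\log y$. Substituting $y=e^u$ with $dy = e^u\,du$ converts the expectation into
\begin{align*}
\Ex[g]\Bigl[\bigl|\int_{-k}^{k} Y^{c-1+2\pi i t} h(t)\, dt\bigr|^2\Bigr]
= \int_{\Rz} \Bigl|\int_{-k}^{k} e^{2\pi i t u} h(t)\, dt\Bigr|^2 g(e^u)\, e^{u(2c-1)}\, du.
\end{align*}
Bounding the weight $g(e^u)\, e^{u(2c-1)} = g(y)\, y^{2c-1}$ by its essential supremum $\Vert g\Vert_{\Lp[\infty]{+}(\basMSy{2c-1})}$ and invoking the usual Fourier Plancherel identity leaves the factor $\Vert\mathds{1}_{[-k,k]} h\Vert_{\Lp[2]{}}^2$, as claimed.

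For part (iii), I would recognise $\Ex[g][Y^{c-1+2\pi i t} h(Y)]$ as the Mellin transform $\Mcg{c}{hg}(t)$ of the product $hg$ and apply the Mellin Plancherel equality \eqref{eq::Plancherel}:
\begin{align*}
\int_{\Rz} \bigl|\Mcg{c}{hg}(t)\bigr|^2\, dt = \int_{\pRz} h^2(y)\, g(y) \cdot \bigl[g(y)\, y^{2c-1}\bigr]\, d\pLm(y).
\end{align*}
Pulling out the bracketed factor as $\Vert g\Vert_{\Lp[\infty]{+}(\basMSy{2c-1})}$ leaves precisely $\Ex[g][h^2(Y)]$. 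The main technical point, shared by (ii) and (iii), is to consistently identify the weighted $\Lp[\infty]{+}(\basMSy{2c-1})$-norm as $\operatorname{ess\,sup}_{y>0} |g(y)|\, y^{2c-1}$ and to verify the $\Lp[1]{}\cap\Lp[2]{}$-integrability of $\mathds{1}_{[-k,k]} h$ and $h g$ needed for Plancherel; both are immediate from the stated hypotheses ($\mathds{1}_{[-k,k]} h\in\Lp[2]{}$ and $h\in\Lp[2]{+}(g)$ together with $g$ bounded in the weighted sense).
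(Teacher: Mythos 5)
Your three computations are sound, and since the paper itself gives no proof (it simply cites Comte 2025), there is nothing in the paper to compare against; your direct route via the integral representations, the substitution $y=e^{u}$, and Fourier/Mellin Plancherel is exactly the natural argument.

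One small caveat where you are a little quick: in (iii) you assert that the $\Lp[1]{}$-integrability of $hg$ is ``immediate from the stated hypotheses.'' It is not. The hypotheses are $g\in\Lp[\infty]{+}(\basMSy{2c-1})$ (i.e.\ $\operatorname{ess\,sup}_{y>0}g(y)y^{2c-1}<\infty$) and $h\in\Lp[2]{+}(g)$. Cauchy--Schwarz gives $\int_{\pRz}|h|g\,y^{c-1}\,d\pLm\le\bigl(\int|h|^{2}g\,d\pLm\bigr)^{1/2}\bigl(\int g\,y^{2c-2}\,d\pLm\bigr)^{1/2}$, but the second factor requires additionally $g\in\Lp[1]{+}(\basMSy{2(c-1)})$, which is not implied by $g\in\Lp[\infty]{+}(\basMSy{2c-1})$ alone (the bound $g\,y^{2c-2}\le \Vert g\Vert_{\Lp[\infty]{+}(\basMSy{2c-1})}y^{-1}$ is not integrable). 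In the paper's applications this extra moment holds because $g=\denY$ and \cref{eq::error-moments} is in force, so the gap is harmless there, but it should either be stated as an assumption or the identification $\Ex[g][Y^{c-1+2\pi\iota t}h(Y)]=\Mcg{c}{hg}(t)$ should be interpreted via the $\Lp[2]{}$-extension of $\Mg{c}$, with only $hg\in\Lp[2]{+}(\basMSy{2c-1})$ (which you do correctly verify) used for Plancherel. Similarly, in (i) you implicitly use that $h$ is real-valued; the displayed identity fails for genuinely complex $h$, so it is worth flagging that the lemma, as used throughout the paper, is applied only to real-valued (density-type) functions. Neither issue alters the substance of your argument.
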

\subsection{Proofs for \cref{sec::bounds-quantiles}}\label{app::u-stat}

The key element to analyse the behaviour of the test statistic $\etqfk{k} $ defined in \cref{eq::estimatorq2kb} is the  decomposition \cref{eq::decompositiontest}. It holds
\begin{align*}
	\etqfk{k}  = U_{k} + 2W_{k} + \wqfk{k}(\denX-\denNX) 
\end{align*}
with the canonical U-statistic 
\begin{align}\label{eq:canonicalustat-app}
	U_{k} :=  \frac{1}{n(n-1)}\sum_{\substack{j_1\not= j_2\\ j_1,j_2 \in\llbracket n\rrbracket }} h_k(Y_{j_1}, Y_{j_2})
\end{align}
where for $x,y\in\pRz$
\begin{align}\label{eq::hk-ustat}
	h_k(x,y) :=\int_\Rz \indc{-k}{k} \vert\Mcg{c}{\denU}\vert^{-2}  \cfun{x}{\cdot} \cfunn{y}{\cdot} \iwF{2} d\Lm
\end{align}
and the centred linear statistic
\begin{align}\label{eq::wk-app}
	W_{k} := \frac{1}{n} \sum_{j \in\llbracket n \rrbracket}  \int_{-k}^k   \frac{\cfun{Y_j}{t} (\overline{\Mcg{c}{\denY}}(t) - \overline{\Mcg{c}{\denNY}}(t))}{\vert\Mcg{c}{\denU}(t)\vert^{2}}\iwF{2}(t) d\Lm(t).
\end{align}
Due to this decomposition, in \cref{sec:u-stat}, we first collect auxiliary results for the U-statistic $U_{k}$. In \cref{sec::lin-stat-results}, we continue with results on the linear statistic $W_k$.

\subsubsection{\textit{U-statistic results}}\label{sec:u-stat}
In order to obtain bounds of the quantiles of $U_k$ given in \cref{eq:canonicalustat-app}, we apply the concentration inequality for canonical U-statistics given in \cref{pr::Ustatconcentration}.
Note that for $x\in\pRz$  the term $\vert x^{c-1+2\pi i t}\vert = x^{c-1}$ and, hence, the kernel $h_k$ given in \cref{eq::hk-ustat} are in general not bounded. Therefore, we decompose $h_k$ in a bounded and a remaining unbounded part. More precisely, given $\delta\in\pRz$ specified later, we denote for $y\in\pRz$ and $t\in\Rz$
\begin{align*}
	\bcut{k}(y,t):= \mathds{1}_{[0,\delta]}(y^{c-1}) y^{c-1+2\pi it} \ \text{ and } \ \ucut{k}(y,t ):= \mathds{1}_{(\delta,\infty)}(y^{c-1}) y^{c-1+2\pi it}.
\end{align*}
Define the bounded part of kernel $h_k$ as
\begin{align}\label{eq::bounded-h-kernel-testing}
	h^b_k(x, y) := \int_{-k}^{k} \frac{(\bcut{k}(y,t)-\Ex[\denY][\bcut{k}(Y_1,t)])(\bcut{k}(x,-t)-\Ex[\denY][\bcut{k}(Y_1,-t)])}{\vert \Mcg{c}{\denU}(t)\vert^2} \iwF{2}(t) d\Lm(t).
\end{align}
	Then, $h^b_k$ is indeed bounded as can be seen in the proof of \cref{lem::UstatconcentrationMellinCase}. Analogously, define
	\begin{align}
		h^u_k(x, y) &:= \int_{-k}^{k} 		\frac{(\ucut{k}(y,t)-\Ex[\denY][\ucut{k}(Y_1,t)])(\ucut{k}(x,-t)-\Ex[\denY][\ucut{k}(Y_1,-t)])}{\vert \Mcg{c}{\denU}(t)\vert^2} \iwF{2}(t) d\Lm(t)\nonumber\\
	 	& \quad + \int_{-k}^{k} 	\frac{(\ucut{k}(y,t)-\Ex[\denY][\ucut{k}(Y_1,t)])(\bcut{k}(x,-t)-\Ex[\denY][\bcut{k}(Y_1,-t)])}{\vert \Mcg{c}{\denU}(t)\vert^2} \iwF{2}(t) d\Lm(t) \nonumber \\
	 	& \quad + \int_{-k}^{k} 				\frac{(\bcut{k}(y,t)-\Ex[\denY][\bcut{k}(Y_1,t)])(\ucut{k}(x,-t)-\Ex[\denY][\ucut{k}(Y_1,-t)])}{\vert \Mcg{c}{\denU}(t)\vert^2} \iwF{2}(t) d\Lm(t). \label{eq::mixed-h-kernel}
	\end{align}
 	The kernels $h^b_k$ and $h^{u}_k$ are also symmetric and real-valued and we have $h_k= h^b_k + h^u_k$. Denote by $U^b_k$ and $U^{u}_k$ the corresponding canonical U-statistics, respectively, where $U_k = U^b_k + U^{u}_k$ evidently. 
	We first derive a quantile-bound for the bounded part $U^b_k$ in \cref{lem::quantil-bounded-ustat} using \cref{pr::Ustatconcentration}  and, subsequently, a quantile-bound for the unbounded part $U^{u}_k$ in \cref{lem::quantil-unbounded-ustat}. Both results are combined to derive a bound for the quantile of $U_k$ in \cref{lem::quantil-ustat}.
	We make use of the quantile-bounds of $U_k$ both under the null and the alternative. However, only under the null we impose eventually a stronger moment condition, that is,  $\denNX\in\Lp[1]{+}({\basMSy{2p(c-1)}})$ for some $p\geq 2$, while under the alternative $\denX\in\Lp[1]{+}({\basMSy{4(c-1)}})$ is sufficient.
	Therefore, we derive the following quantile-bounds of $U_k$ assuming $g:=\denU\mc\denX$ with $\denU,\denX\in\Lp[1]{+}({\basMSy{2p(c-1)}})$ for some $p\geq2$.
	We introduce  the following notation
	\begin{align*}
		\cstV[\denX|\denU](p) = (\Vert \denX \mc \denU\Vert_{\Lp[1]{+}({\basMSy{2p(c-1)}})} \vee 1).
	\end{align*}
	Note that $ \cstV[\denX|\denU] = \cstV[\denX|\denU](1)$.
	For the bounded U-statistic, the following lemma gives the constants for applying \cref{pr::Ustatconcentration}. It was shown in \cite{Comte2025} and we state the proof here for the sake of completeness.

	 \begin{lem}[Constants for the bounded U-statistic]\label{lem::UstatconcentrationMellinCase}
	Let the assumptions of \cref{lem::quantil-ustat} be satisfied. The kernel $h^b_k$ introduced in \cref{eq::bounded-h-kernel-testing} with $\delta\in\pRz$ is real-valued, bounded, symmetric and satisfies for all $y\in\pRz$ that $\Ex[\denY][h_k^b(Y, y)]= 0$.  Moreover,
	\begin{align*}
		A &:= 4 \delta^2 \Vert \mathds{1}_{[-k,k]} / \Mcg{c}{\denU} \Vert_{\Lp[2]{}(\iwF{2})}^2,\\
		B^2 &:= 4\cstC[\denU] \cstV[\denX|\denU]\delta^2 \Vert \mathds{1}_{[-k,k]} / \Mcg{c}{\denU} \Vert_{\Lp[4]{}(\iwF{4})}^4,\\
		C^2 &:=\cstC[\denU] \icstV[\denX|\denU]{2}\Vert \mathds{1}_{[-k,k]} / \Mcg{c}{\denU} \Vert_{\Lp[4]{}(\iwF{4})}^4, \\
		D &:= 4\cstC[\denU] \cstV[\denX|\denU] \Vert \mathds{1}_{[-k,k]} / \Mcg{c}{\denU} \Vert_{\Lp[\infty]{}(\wf)}^2
	\end{align*}
	satisfy the conditions of the U-statistic concentration inequality given in \cref{pr::Ustatconcentration}. 
\end{lem}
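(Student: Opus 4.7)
The plan is to verify, one after another, the four constants required by \cref{pr::Ustatconcentration} for the bounded kernel $h_k^b$, after first establishing its structural properties: real-valuedness follows from the identity $\overline{\bcut{y,t}}=\bcut{y,-t}$, which makes the integrand in \cref{eq::bounded-h-kernel-testing} take complex-conjugate values under $t\mapsto -t$ so that the integral over the symmetric interval $[-k,k]$ is real; symmetry $h_k^b(x,y)=h_k^b(y,x)$ is immediate after substituting $t\mapsto -t$ and swapping the two factors; and the centring $\E_{\denY}[h_k^b(Y,y)]=0$ reduces by Fubini to the trivial identity $\E_{\denY}[\bcut{Y,t}-\E_{\denY}[\bcut{Y_1,t}]]=0$. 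Writing $\tilde b_k(y,t):=\bcut{y,t}-\E_{\denY}[\bcut{Y_1,t}]$ for brevity, the only two tools needed in what follows are (a) the pointwise bound $|\tilde b_k(y,t)|\leq 2\delta$, inherited from the truncation $|\bcut{y,t}|\leq\delta$, and (b) the Mellin-integrability estimates \cref{lem::Mellinprops}(ii)--(iii) combined with the norm inequality \cref{eq::mm3} of \cref{lem::normineq}(iv), which in particular yields $\Vert\denY\Vert_{\Lp[\infty]{+}(\basMSy{2c-1})}\leq\cstC[\denU]\cstV[\denX|\denU]$.

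For the sup bound $A$ I would apply $|\tilde b_k(y,t)|\leq 2\delta$ in both factors of \cref{eq::bounded-h-kernel-testing}, yielding at once $|h_k^b(x,y)|\leq 4\delta^2\Vert\mathds{1}_{[-k,k]}/\Mcg{c}{\denU}\Vert^2_{\Lp[2]{}(\iwF{2})}$. For $B^2$ I would fix $y$, rewrite $h_k^b(Y_1,y)=\int_{-k}^k F_y(t)\tilde b_k(Y_1,-t)d\Lm(t)$ with $F_y(t):=\tilde b_k(y,t)\iwF{2}(t)/|\Mcg{c}{\denU}(t)|^2$, use the pointwise bound in $F_y$, and split $\tilde b_k(Y_1,-t)=\bcut{Y_1,-t}-\mu_{-t}$; then \cref{lem::Mellinprops}(ii) together with \cref{eq::mm3} bounds $\E_{Y_1}[h_k^b(Y_1,y)^2]$ by $\cstC[\denU]\cstV[\denX|\denU]\Vert F_y\Vert^2_{\Lp[2]{}}\leq 4\cstC[\denU]\cstV[\denX|\denU]\delta^2\Vert\mathds{1}_{[-k,k]}/\Mcg{c}{\denU}\Vert^4_{\Lp[4]{}(\iwF{4})}$, and taking the sup over $y$ produces the stated $B^2$. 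For $C^2=\E[h_k^b(Y_1,Y_2)^2]$ I would iterate, this time replacing one copy of $|\tilde b_k(Y_2,t)|\leq 2\delta$ by the $L^2$-moment $\E[|\bcut{Y_2,t}|^2]\leq\Vert\denY\Vert_{\Lp[1]{+}(\basMSy{2(c-1)})}$; a second application of \cref{lem::Mellinprops}(ii) in the $Y_2$-variable and absorption of the remaining moment into $\Vert\denY\Vert_{\Lp[1]{+}(\basMSy{4(c-1)})}=\icstV[\denX|\denU]{2}$ via \cref{lem::normineq}(i) then yield the stated $C^2$. Finally, for $D$, the approach is to write
\[
\E[h_k^b(Y_1,Y_2)\xi(Y_1)\zeta(Y_2)]=\int_{-k}^k\frac{\E[\tilde b_k(Y_1,-t)\xi(Y_1)]\,\E[\tilde b_k(Y_2,t)\zeta(Y_2)]}{|\Mcg{c}{\denU}(t)|^2}\iwF{2}(t)d\Lm(t),
\]
extract the factor $\Vert\mathds{1}_{[-k,k]}/\Mcg{c}{\denU}\Vert^2_{\Lp[\infty]{}(\iwF{})}$ uniformly in $t$, and apply Cauchy--Schwarz in $t$; each $\Lp[2]{}(d\Lm)$-norm of $t\mapsto\E[\tilde b_k(Y,\pm t)\chi(Y)]$ is then controlled by \cref{lem::Mellinprops}(iii) after the rewriting $\E[\tilde b_k(Y,t)\chi(Y)]=\E[\bcut{Y,t}(\chi(Y)-\E[\chi(Y)])]$ and the constraint $\E[\chi(Y)^2]\leq 1$, which produces the factor $\cstC[\denU]\cstV[\denX|\denU]$.

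The main obstacle is to route each estimate through the sharper norm $\Vert\mathds{1}_{[-k,k]}/\Mcg{c}{\denU}\Vert^4_{\Lp[4]{}(\iwF{4})}$ appearing in $B^2$ and $C^2$ rather than the weaker $\Lp[2]{}(\iwF{2})$-square norm that a one-shot Cauchy--Schwarz would produce, while picking up the moment $\cstV[\denX|\denU]$ in $B^2$ and $D$ but the higher $\icstV[\denX|\denU]{2}$ in $C^2$, where an unavoidable second application of \cref{lem::Mellinprops}(ii) forces the stronger moment condition. Keeping these two bookkeeping lines straight through the iterated applications of \cref{lem::Mellinprops}(ii)--(iii) is what makes the proof technically delicate rather than routine.
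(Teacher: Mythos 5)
Your verification of $A$, $B$, and $D$ is sound. For $D$ you take a genuinely different and slicker route than the paper: the paper splits $\bcut{k}(Y,t)-\Ex[\denY][\bcut{k}(Y_1,t)]$ into its two pieces, bounds each separately via \cref{lem::Mellinprops}(iii), and picks up the factor $4$ from $\vert a-b\vert^2\leq 2(\vert a\vert^2+\vert b\vert^2)$, whereas you move the centring onto the test function $\chi$ and apply Cauchy--Schwarz in $t$, producing the sharper constant $\cstC[\denU]\cstV[\denX|\denU]\Vert\mathds{1}_{[-k,k]}/\Mcg{c}{\denU}\Vert^2_{\Lp[\infty]{}(\wf)}$; since $D$ only needs to dominate the supremum, this is perfectly adequate.

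The $C$ step, however, contains a genuine misconception. After the single application of \cref{lem::Mellinprops}(ii) you already made for $B$, which gives
\begin{align*}
\Ex[\denY][\vert h^b_k(y,Y_1)\vert^2]\leq\Vert\denY\Vert_{\Lp[\infty]{+}(\basMSy{2c-1})}\int_{-k}^k\vert\bcut{k}(y,t)-\Ex[\denY][\bcut{k}(Y_1,t)]\vert^2\,\frac{\iwF{4}(t)}{\vert\Mcg{c}{\denU}(t)\vert^{4}}\,d\Lm(t),
\end{align*}
all that remains is to take the expectation over $y=Y_2$ via Fubini and insert the elementary second-moment bound $\Ex[\denY][\vert\bcut{k}(Y_2,t)-\Ex[\denY][\bcut{k}(Y_1,t)]\vert^2]\leq\Ex[\denY][Y_2^{2(c-1)}]\leq\cstV[\denX|\denU]$, together with $\Vert\denY\Vert_{\Lp[\infty]{+}(\basMSy{2c-1})}\leq\cstC[\denU]\cstV[\denX|\denU]$. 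This yields exactly $C^2=\cstC[\denU]\cstV[\denX|\denU]^2\Vert\mathds{1}_{[-k,k]}/\Mcg{c}{\denU}\Vert^4_{\Lp[4]{}(\iwF{4})}$. There is no ``second application of \cref{lem::Mellinprops}(ii)'': once the $B$-type estimate is in hand, there is no remaining square-of-an-integral structure in $Y_2$ to which it could be applied. Moreover, the constant $\icstV[\denX|\denU]{2}$ in the statement is $\cstV[\denX|\denU]^2=(\Vert\denY\Vert_{\Lp[1]{+}(\basMSy{2(c-1)})}\vee 1)^2$, not the fourth-moment quantity $\Vert\denY\Vert_{\Lp[1]{+}(\basMSy{4(c-1)})}\vee 1$, which the paper denotes $\icstVz[\denX|\denU]{}$. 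As written, your route misidentifies the constant, invokes \cref{lem::normineq}(i) where it is not needed, and would deliver a bound involving the larger fourth-moment quantity rather than $\cstV[\denX|\denU]^2$, which does not establish the stated $C^2$.
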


\begin{proof}[Proof of \cref{lem::UstatconcentrationMellinCase}]
	We compute the quantities $A$, $B$, $C$ and $D$ verifying the four inequalities of \cref{pr::Ustatconcentration}. Consider $A$. Since $\vert \bcut{k}(y,t) - \Ex[\denY][\bcut{k}(Y_1, t)]\vert \leq 2\delta$ for all $t\in\Rz$ and $y\in\pRz$ we have
	\begin{align*}
		\sup_{x,y\in\pRz} \vert h^b_k(x,y) \vert \leq 4 \delta^2 \int_{-k}^k \frac{1}{\vert\Mcg{c}{\denU}(t)\vert^2} \iwF{2}(t) d\Lm(t)  = A.
	\end{align*}
	Consider $B$. Using that the U-statistic is canonical and \cref{lem::Mellinprops} (iii) we get for $y\in\pRz$ 
	\begin{align}
		&\Ex[\denY] [\vert h^b_k(y, Y_1)\vert^2] \leq \Ex[\denY] \left[\left\vert\int_{-k}^k  	\bcut{k}(Y_1,-t)\frac{(\bcut{k}(y,t)-\Ex[\denY][\bcut{k}(Y_1,t)])}{\vert \Mcg{c}{\denU}(t)\vert^2} \iwF{2}(t) d\Lm(t) \right\vert^2 \right]\nonumber\\
		&=  \Ex[\denY] \left[\mathds{1}_{[0, \delta]}(Y_1^{c-1}) \left\vert\int_{-k}^k Y_1^{c-1-2\pi it} \frac{(\bcut{k}(y,t)-\Ex[\denY][\bcut{k}(Y_1,t)])}{\vert \Mcg{c}{\denU}(t)\vert^2} \iwF{2}(t) d\Lm(t) \right\vert^2 \right] \nonumber\\
		&\leq \Ex[\denY] \left[ \left\vert\int_{-k}^k Y_1^{c-1-2\pi it} \frac{(\bcut{k}(y,t)-\Ex[\denY][\bcut{k}(Y_1,t)])}{\vert \Mcg{c}{\denU}(t)\vert^2} \iwF{2}(t) d\Lm(t) ö\right\vert^2 \right]\nonumber \\
		&\leq \Vert \denY  \Vert_{\Lp[\infty]{+}(\basMSy{2c-1})}  \int_{-k}^k \frac{\vert\bcut{k}(y,t)-\Ex[\denY][\bcut{k}(Y_1,t)]\vert^2}{\vert \Mcg{c}{\denU}(t)\vert^4} \iwF{4}(t) d\Lm(t).  \label{eq::step-B-constant}
	\end{align}
	Consequently, $\vert \bcut{k}(y,t) - \Ex[\denY][\bcut{k}(Y_1, t)]\vert \leq 2\delta$ for all $t\in\Rz$  and $y\in\pRz$  implies
	\begin{align*}
		\sup_{y\in\pRz} 	\Ex[\denY] [\vert h^b_k(y, Y_1)\vert^2] &\leq \cstC[\denU] \cstV[\denX|\denU] 4\delta^2  \int_{-k}^k \frac{1}{\vert \Mcg{c}{\denU}(t)\vert^4} \iwF{4}(t) d\Lm(t) = B^2. 
	\end{align*}
	Consider C. Keep in mind that for all $t\in\Rz$ 
	\begin{align*}
		&\Ex[\denY] \left[\vert \bcut{k}(Y_1,t) - \Ex[\denY][\bcut{k}(Y_1,t)]\vert^2\right] \leq \Ex[\denY]\left[\vert \bcut{k}(Y_1,t) \vert^2\right] \\
		&=  \Ex[\denY]\left[\mathds{1}_{[0,\delta]}(Y_1^{c-1})\vert Y_1^{c-1+2\pi it} \vert^2\right] \leq \Ex[\denY] [Y_1^{2c-2}] \leq \cstV[\denX|\denU].
	\end{align*}
	Using additionally the calculations of  \cref{eq::step-B-constant} for $B$, it follows
	\begin{align*}
		\iEx[\denY]{2} \left[ \vert h^{b}_k(Y_1,Y_2)\vert^2  \right] &\leq  \Vert \denY  \Vert_{\Lp[\infty]{+}(\basMSy{2c-1})}  \int_{-k}^k \Ex[\denY][\vert\bcut{k}(y,t)-\Ex[\denY][\bcut{k}(Y_1,t)]\vert^2]\frac{\iwF{4}(t)}{\vert \Mcg{c}{\denU}(t)\vert^4}  d\Lm(t)\\
		&\leq \cstC[\denU] \icstV[\denX|\denU]{2} \int_{-k}^k \frac{1}{\vert \Mcg{c}{\denU}(t)\vert^4} \iwF{4}(t) d\Lm(t) = C^2.
	\end{align*}
	Finally, consider $D$. $D=C$ satisfies the condition, but we find a sharper bound. For this, we first see that for all $\xi\in\Lp[2]{+}(\denY)$ due to  \cref{lem::Mellinprops} (iii) we have
	\begin{align*}
		\int_{\Rz} \left\vert \Ex[\denY] \left[\xi(Y_1)\bcut{k}(Y_1, t)\right] \right\vert^2 d\Lm(t) &= 
		\int_{\Rz} \left\vert \Ex[\denY] \left[Y_1^{c-1+2\pi it}\xi(Y_1)\mathds{1}_{[0, \delta]}(Y_1^{c-1}) \right] \right\vert^2 d\Lm(t)\\
		&\leq \Vert \denY  \Vert_{\Lp[\infty]{+}(\basMSy{2c-1})} \Ex[\denY] \left[\vert \xi(Y_1)\vert^2 \mathds{1}_{[0, \delta]}(Y_1^{c-1})\right]\\
		&\leq  \cstC[\denU] \cstV[\denX|\denU] \Ex[\denY] [\vert \xi(Y_1)\vert^2].
	\end{align*}
	Similarly, by \cref{lem::Mellinprops} (iii) we get 
	\begin{align*}
	\int_{\Rz} \vert \Ex[\denY] [Y_1^{c-1+2\pi it}\mathds{1}_{[0, \delta]}(Y_1^{c-1}) ] \vert^2 d\Lm(t) \leq \cstC[\denU] \cstV[\denX|\denU],
	\end{align*}
	which in turn implies
	\begin{align*}
		&\int_{\Rz} \vert \Ex[\denY]\left[\xi(Y_1)\Ex[\denY][\bcut{k}(Y_1, t)]\right] \vert^2 d\Lm(t) \leq \int_{\Rz} \Ex[\denY]\left[\vert \xi(Y_1)\vert^2\right]  \vert\Ex[\denY][\bcut{k}(Y_1, t)]\vert^2 d\Lm(t)\\
		&=   \Ex[\denY]\left[\vert \xi(Y_1)\vert^2\right] \int_{\Rz} \vert \Ex[\denY] [Y_1^{c-1+2\pi it}\mathds{1}_{[0, \delta]}(Y_1^{c-1}) ] \vert^2 d\Lm(t)\leq \Ex[\denY]\left[\vert \xi(Y_1)\vert^2\right] \cstC[\denU] \cstV[\denX|\denU].
	\end{align*}
	Combining the last bounds it follows
	 \begin{align*}
		\int_{\Rz} \vert \Ex[\denY][\xi(Y_1)(\bcut{k}(Y_1, t)-\Ex[\denY][\bcut{k}(Y_1,t)])]\vert^2 d\Lm(t)\leq 4\cstC[\denU] \cstV[\denX|\denU]\Ex[\denY][\vert\xi(Y_1)\vert^2].
	 \end{align*}
	  Consequently, for all $\xi,\zeta\in\Lp[2]{+}(\denY)$ with $\Ex[\denY][\vert \xi(Y_1)\vert^2]\leq 1$ and $\Ex[\denY][\vert \zeta(Y_1)\vert^2]\leq 1$ it follows
	\begin{align*}
		&\int_{\pRz} h^b_k(x,y)\xi(x)\zeta(y) \pM[\denY](dx)\pM[\denY](dy)\\
		& = \int_{-k}^k \text{\small$\frac{\Ex[\denY][\xi(Y_1)(\bcut{k}(Y_1,t)-\Ex[\denY][\bcut{k}(Y_1,t)])]\Ex[\denY][\zeta(Y_2)(\bcut{k}(Y_2,-t)-\Ex[\denY][\bcut{k}(Y_1, -t)])]}{\vert \Mcg{c}{\denU}(t)\vert^2}$} \iwF{2}(t)d\Lm(t)\\
		&\leq \sup_{\substack{\xi \in \Lp[2]{+}(\denY),\\ \Ex[\denY][\vert \xi(Y_1)\vert^2]\leq 1 }}\left\{ \int_{-k}^k \frac{\vert\Ex[\denY][\xi(Y_1)(\bcut{k}(Y_1,t)-\Ex[\denY][\bcut{k}(Y_1,t)])]\vert^2}{\vert \Mcg{c}{\denU}(t)\vert^2}\iwF{2}(t)d\Lm(t)\right\}\\
		&\leq \Vert \mathds{1}_{[-k,k]} / \Mcg{c}{\denU} \Vert_{\Lp[\infty]{}(\wF)}^2 \sup_{\substack{\xi \in \Lp[2]{+}(\denY),\\ \Ex[\denY][\vert \xi(Y_1)\vert^2]\leq 1 }}\left\{ \int_{-k}^k \vert\Ex[\denY][\xi(Y_1)(\bcut{k}(Y_1,t)-\Ex[\denY][\bcut{k}(Y_1,t)])]\vert^2 d\Lm(t)\right\}\\
		&\leq 4\cstC[\denU] \cstV[\denX|\denU] \Vert \mathds{1}_{[-k,k]} / \Mcg{c}{\denU} \Vert_{\Lp[\infty]{}(\wF)}^2 = D.
	\end{align*}
	This concludes the proof.
\end{proof}

With this result we show the following lemma for the bounded part of the U-statistic \cref{eq::canonicalustat}. 
\begin{lem}[Quantile of the bounded U-statistic]\label{lem::quantil-bounded-ustat}
	Let the assumptions of \cref{lem::quantil-ustat} be satisfied. Consider for each $n\geq 2$ and any $\delta\in\pRz$ the canonical U-statistic $U_k^b$. For $\gamma\in(0,1)$ and 
	\begin{align*}
		\tau_k^b(\gamma)&:= \left( 9 \cstC[\denU] \cstV[\denX|\denU] + 69493 \frac{\delta^2}{n} \sqrt{2k}L_\gamma^2\right)L_\gamma^{1/2} \frac{\Vert \mathds{1}_{[-k,k]} / \Mcg{c}{\denU} \Vert_{\Lp[4]{}(\iwF{4})}^2 }{n} \\
		&\qquad\qquad+ 52 \cstC[\denU] \cstV[\denX|\denU]  L_\gamma \frac{\Vert \mathds{1}_{[-k,k]} / \Mcg{c}{\denU} \Vert_{\Lp[\infty]{}(\wf)}^2 }{n}
	\end{align*}
	we have 
	\begin{align*}
		\ipM[\denU, \denX]{n} (U_k^b \geq \tau_k^b(\gamma)) \leq \gamma.
	\end{align*}
\end{lem}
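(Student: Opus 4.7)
The plan is to apply the canonical-U-statistic concentration inequality \cref{pr::Ustatconcentration} to $U_k^b$ with the four constants $A$, $B^2$, $C^2$, $D$ identified in \cref{lem::UstatconcentrationMellinCase}, and then to check that the threshold $\tau_k^b(\gamma)$ dominates the resulting four-term tail bound. First I would set $x := L_\gamma = 1-\log\gamma$ in \cref{pr::Ustatconcentration}; this choice makes $\exp(1-L_\gamma)=\gamma$, pinning down the correct probability level and producing
\begin{align*}
	U_k^b \;\leq\; 8\,\frac{C}{n}\, L_\gamma^{1/2} + 13\,\frac{D}{n}\, L_\gamma + 261\,\frac{B}{n^{3/2}}\, L_\gamma^{3/2} + 343\,\frac{A}{n^2}\, L_\gamma^2
\end{align*}
on an event of probability at least $1-\gamma$. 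The remaining task is entirely deterministic: identify each of the four summands on the right-hand side with a part of $\tau_k^b(\gamma)$.

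Three of the four summands can be handled in a straightforward way. The $C$-contribution will translate into the $9\,\cstC[\denU]\cstV[\denX|\denU]$-piece of the first summand of $\tau_k^b(\gamma)$: using $\cstC[\denU],\cstV[\denX|\denU]\geq 1$ I will absorb the square roots in $C$ into first powers, and the relaxation $8\mapsto 9$ provides the necessary slack. The $D$-contribution reproduces the last summand verbatim, with the constant $13$ comfortably dominated by $52$. For the $A$-contribution, the elementary Cauchy-Schwarz bound
\begin{align*}
	\Vert \mathds{1}_{[-k,k]}/\Mcg{c}{\denU}\Vert_{\Lp[2]{}(\iwF{2})}^2 \;\leq\; \sqrt{2k}\;\Vert \mathds{1}_{[-k,k]}/\Mcg{c}{\denU}\Vert_{\Lp[4]{}(\iwF{4})}^2
\end{align*}
converts the $L^2(\iwF{2})$-weight appearing in $A$ into the $L^4(\iwF{4})$-weight present in the middle summand of $\tau_k^b(\gamma)$, generating exactly the $\delta^2\sqrt{2k}\,L_\gamma^{5/2}/n^2$ structure (the accumulated constant $343\cdot 4 = 1372$ sits well below $69493$).

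The main obstacle, and the source of the otherwise mysterious numerical constant $69493$, is the ``interaction'' $B$-term $261\,B\,L_\gamma^{3/2}/n^{3/2}$, which does not fit cleanly into any single summand of $\tau_k^b(\gamma)$ in isolation. The plan is to exploit the algebraic relation $B^2 = 4\delta^2\cstC[\denU]\cstV[\denX|\denU]\Vert \mathds{1}_{[-k,k]}/\Mcg{c}{\denU}\Vert_{\Lp[4]{}(\iwF{4})}^4$, which shows that $B$ is, up to the factor $2\delta$, directly comparable to the quantity $\sqrt{\cstC[\denU]\cstV[\denX|\denU]}\,\Vert\cdot\Vert_{\Lp[4]{}(\iwF{4})}^2$ already sitting inside the $C$-summand. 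I would then apply a Young-type inequality $ab\leq \tfrac12 a^2+\tfrac12 b^2$ to a suitable factorisation $BL_\gamma^{3/2}n^{-3/2} = a\cdot b$, arranged so that $a^2$ is absorbed into the first summand of $\tau_k^b(\gamma)$ and $b^2$ into the middle summand, with $L_\gamma\geq 1$ used to upgrade $L_\gamma^{3/2}$ to $L_\gamma^{5/2}$ wherever the exponent needs to be raised. Collecting and bounding the accumulated numerical constants by $9$, $69493$ and $52$ will then complete the verification. No probabilistic input beyond \cref{pr::Ustatconcentration,lem::UstatconcentrationMellinCase} is required; the remaining effort is careful bookkeeping of constants.
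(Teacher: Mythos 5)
Your proposal is correct and follows essentially the same line as the paper's proof: apply \cref{pr::Ustatconcentration} with $x = L_\gamma$ so that the failure probability is $\exp(1-L_\gamma)=\gamma$, substitute the constants from \cref{lem::UstatconcentrationMellinCase}, convert the $\Lp[2]{}(\iwF{2})$-norm in the $A$-term via $\Vert \mathds{1}_{[-k,k]}/\Mcg{c}{\denU}\Vert_{\Lp[2]{}(\iwF{2})}^2\leq\sqrt{2k}\,\Vert \mathds{1}_{[-k,k]}/\Mcg{c}{\denU}\Vert_{\Lp[4]{}(\iwF{4})}^2$, and split the $B$-term with $2ab\leq a^2+b^2$ so that one piece is absorbed into the $\cstC[\denU]\cstV[\denX|\denU]$-part (giving $8+1=9$) and the other into the $\delta^2\sqrt{2k}$-part (giving $261^2+1372=69493$), using $L_\gamma,\sqrt{2k}\geq 1$ to align exponents. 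One small imprecision: the $D$-contribution is not ``comfortably dominated by $52$'' — since $D=4\cstC[\denU]\cstV[\denX|\denU]\Vert\mathds{1}_{[-k,k]}/\Mcg{c}{\denU}\Vert_{\Lp[\infty]{}(\wf)}^2$, one has $13D/n\cdot L_\gamma$ equal to the last summand of $\tau_k^b(\gamma)$ exactly, with no slack.
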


\begin{proof}[Proof of \cref{lem::quantil-bounded-ustat}]
	We intend to apply the exponential inequality for canonical U-statistics given in \cref{pr::Ustatconcentration} using quantities $A$, $B$, $C$, and $D$ which verify the four required inequalities. Precisely, for $\gamma\in(0,1)$ and $x = L_\gamma = 1- \log\gamma$ due to \cref{pr::Ustatconcentration} we have 
	\begin{align*}
		\ipM[\denU, \denX]{n} \left(U_{k} \geq 8\frac Cn  L_\gamma^{1/2} + 13 \frac Dn L_\gamma + 261\frac{B}{n^{3/2}}L_\gamma^{3/2} + 343 \frac{A}{n^2} L_\gamma^2 \right)  \leq \exp(1-L_\gamma)= \gamma.
   \end{align*}
   Exploiting \cref{lem::quantil-bounded-ustat} we have that 
   \begin{align*}
   &8\frac Cn  L_\gamma^{1/2} + 13 \frac Dn L_\gamma + 261\frac{B}{n^{3/2}}L_\gamma^{3/2} + 343 \frac{A}{n^2} L_\gamma^2\\
   &= 8\frac{\icstC[\denU]{1/2} \icstV[\denX|\denU]{}\Vert \mathds{1}_{[-k,k]} / \Mcg{c}{\denU} \Vert_{\Lp[4]{}(\iwF{4})}^2}{n}  L_\gamma^{1/2} + 52 \frac{\cstC[\denU] \cstV[\denX|\denU] \Vert \mathds{1}_{[-k,k]} / \Mcg{c}{\denU} \Vert_{\Lp[\infty]{}(\wf)}^2}{n} L_\gamma \\
   &\quad + 522 \frac{\icstC[\denU]{1/2} \icstV[\denX|\denU]{1/2}\delta \Vert \mathds{1}_{[-k,k]} / \Mcg{c}{\denU} \Vert_{\Lp[4]{}(\iwF{4})}^2}{n^{3/2}}L_\gamma^{3/2} + 1372 \frac{\delta^2 \Vert \mathds{1}_{[-k,k]} / \Mcg{c}{\denU} \Vert_{\Lp[2]{}(\iwF{2})}^2}{n^2} L_\gamma^2
   \end{align*}
   which together with $\Vert \mathds{1}_{[-k,k]} / \Mcg{c}{\denU} \Vert_{\Lp[2]{}(\iwF{2})}^4 \leq (2k)\Vert \mathds{1}_{[-k,k]} / \Mcg{c}{\denU} \Vert_{\Lp[4]{}(\iwF{4})}^4 $ implies
   \begin{align*}
		&8\frac Cn  L_\gamma^{1/2} + 13 \frac Dn L_\gamma + 261\frac{B}{n^{3/2}}L_\gamma^{3/2} + 343 \frac{A}{n^2} L_\gamma^2\\
		&\leq 52 \frac{\cstC[\denU] \cstV[\denX|\denU] \Vert \mathds{1}_{[-k,k]} / \Mcg{c}{\denU} \Vert_{\Lp[\infty]{}(\wf)}^2}{n} L_\gamma +  \frac{\Vert \mathds{1}_{[-k,k]} / \Mcg{c}{\denU} \Vert_{\Lp[4]{}(\iwF{4})}^2}{n} L_\gamma^{1/2} \\
		&\qquad \qquad \cdot \left( 8 \icstC[\denU]{1/2} \cstV[\denX|\denU] + 522 \frac{\icstC[\denU]{1/2} \icstV[\denX|\denU]{1/2}\delta }{n^{1/2}}L_\gamma + 1372 \frac{\delta^2 \sqrt{2k}}{n} L_\gamma^{3/2} \right).
   \end{align*}
   Using that $2ab\leq a^2 + b^2$ for $a,b\in\pRz$, we have that 
   \begin{align*}
	522 \frac{\icstC[\denU]{1/2} \icstV[\denX|\denU]{1/2}\delta }{n^{1/2}}L_\gamma \leq \cstC[\denU]\cstV[\denX|\denU] + (522/2)^2 \frac{\delta^2L_\gamma^2}{n}
   \end{align*}
   and, thus, since $\sqrt{2k}, L_\gamma \geq 1$
   \begin{align*}
	&8\frac Cn  L_\gamma^{1/2} + 13 \frac Dn L_\gamma + 261\frac{B}{n^{3/2}}L_\gamma^{3/2} + 343 \frac{A}{n^2} L_\gamma^2\\
	&\leq  52 \frac{\cstC[\denU] \cstV[\denX|\denU] \Vert \mathds{1}_{[-k,k]} / \Mcg{c}{\denU} \Vert_{\Lp[\infty]{}(\wf)}^2}{n} L_\gamma +  \frac{\Vert \mathds{1}_{[-k,k]} / \Mcg{c}{\denU} \Vert_{\Lp[4]{}(\iwF{4})}^2}{n} L_\gamma^{1/2} \\
	&\qquad \qquad \cdot \left( 9 \cstC[\denU] \cstV[\denX|\denU] + ((522/2)^2 +  1372)\frac{\delta^2 \sqrt{2k}}{n} L_\gamma^{2} \right)  = \tau_k^b(\gamma)
   \end{align*}
   with $(522/22)^2 +1372 = 69493$ due to \cref{pr::Ustatconcentration} follows
   \begin{align*}
	\ipM[\denU, \denX]{n} (U_k^b \geq \tau_k^b(\gamma)) \leq \gamma
   \end{align*}
   which completes the proof.
\end{proof}

Further, we give a quantile-bound of the unbounded part $U_k^u$ of the U-statistic given in \cref{eq::canonicalustat}.
\begin{lem}[Quantile of the unbounded U-statistic]\label{lem::quantil-unbounded-ustat}
	Let the assumptions of \cref{lem::quantil-ustat} be satisfied. Consider for each $n\geq 2$ and $\delta\in\pRz$ the canonical U-statistic $U_k^u$. For $\gamma\in(0,1)$ setting
	\begin{align*}
		\tau_k^u (\gamma) := \frac{\delta^{(1-p)}}{\gamma^{1/2}} 6\icstC[\denU]{1/2} \icstV[\denX|\denU]{1/2} (\cstV[\denX|\denU](p))^{1/2} \frac{\Vert \mathds{1}_{[-k,k]} / \Mcg{c}{\denU} \Vert_{\Lp[4]{}(\iwF{4})}^2}{n}
  	\end{align*}
	we have 
	\begin{align*}
		\ipM[\denU, \denX]{n}(U_k^u \geq \tau_k^u(\gamma)) \leq \gamma.
	\end{align*}
\end{lem}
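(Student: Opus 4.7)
The plan is to bypass the exponential concentration route used for the bounded part and instead exploit a variance bound together with Chebyshev's inequality, which is well-suited here because the three-term kernel $h_k^u$ defined in \cref{eq::mixed-h-kernel} is unbounded yet has a moment controlled by the tail behaviour of $\ucut{k}$. Since $U_k^u$ is a canonical U-statistic, $\Ex[\denU,\denX]{n}[U_k^u]=0$, so Chebyshev gives $\ipM[\denU,\denX]{n}(U_k^u \geq t) \leq \operatorname{Var}(U_k^u)/t^2$. Combined with the U-statistic variance bound of \cref{lem::var-bound-testing}, $\operatorname{Var}(U_k^u) \leq \tfrac{2}{n(n-1)}\Ex[\denY]{}[|h_k^u(Y_1,Y_2)|^2]$, and $n(n-1)\geq n^2/2$ for $n\geq 2$, it then suffices to produce a clean second-moment bound for $h_k^u$ of the form $C\,\delta^{-2(p-1)}\cstC[\denU]\cstV[\denX|\denU]\cstV[\denX|\denU](p)\Vert\mathds{1}_{[-k,k]}/\Mcg{c}{\denU}\Vert_{\Lp[4]{}(\iwF{4})}^4$. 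Then choosing $t=\tau_k^u(\gamma)$ with the stated constant $6$ absorbs the factor $C$ in the square root after Chebyshev.

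The second-moment bound itself is obtained by mimicking the derivation of the constant $C^2$ in \cref{lem::UstatconcentrationMellinCase}. First, $|h_k^u|^2 \leq 3(|T_1|^2+|T_2|^2+|T_3|^2)$ by the Cauchy--Schwarz inequality, where $T_1,T_2,T_3$ are the three integrals in \cref{eq::mixed-h-kernel}. For each $T_i$, freeze one variable and apply the variance-bounds-square-mean trick to discard the centering, then invoke \cref{lem::Mellinprops} (ii)--(iii) together with the pointwise bound $\vert\mathds{1}_{(\delta,\infty)}(y^{c-1})\vert\leq 1$ applied to the $\ucut{k}$-indicator, obtaining
\[
\Ex[\denY]{}[|T_i(Y_1,Y_2)|^2] \leq \Vert \denY\Vert_{\Lp[\infty]{+}(\basMSy{2c-1})} \int_{-k}^{k} \Ex[\denY]{}[|\tilde U(Y_1,t)|^2]\, \frac{\iwF{4}(t)}{|\Mcg{c}{\denU}(t)|^4}\,d\Lm(t).
\]
The key new estimate is the moment inequality for the unbounded cutoff,
\[
\Ex[\denY]{}[|\tilde U(Y_1,t)|^2] \leq \Ex[\denY]{}[\mathds{1}_{(\delta,\infty)}(Y_1^{c-1})Y_1^{2(c-1)}] \leq \delta^{-2(p-1)}\Vert\denX\mc\denU\Vert_{\Lp[1]{+}(\basMSy{2p(c-1)})} \leq \delta^{-2(p-1)}\cstV[\denX|\denU](p),
\]
where the second inequality uses the Markov-type tail bound $\mathds{1}_{(\delta,\infty)}(y^{c-1}) \leq (y^{c-1}/\delta)^{2(p-1)}$ valid for $p\geq 1$ since $y^{c-1}>\delta$ on the indicator. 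Combined with $\Vert\denY\Vert_{\Lp[\infty]{+}(\basMSy{2c-1})} \leq \cstC[\denU]\cstV[\denX|\denU]$ from \cref{lem::normineq} (iv), each $T_i$ contributes at most $\cstC[\denU]\cstV[\denX|\denU]\cstV[\denX|\denU](p)\delta^{-2(p-1)}\Vert\mathds{1}_{[-k,k]}/\Mcg{c}{\denU}\Vert_{\Lp[4]{}(\iwF{4})}^4$, so the factor of three from Cauchy--Schwarz yields $C=9$.

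Putting everything together, $\operatorname{Var}(U_k^u) \leq \tfrac{36}{n^2}\delta^{-2(p-1)}\cstC[\denU]\cstV[\denX|\denU]\cstV[\denX|\denU](p)\Vert\mathds{1}_{[-k,k]}/\Mcg{c}{\denU}\Vert_{\Lp[4]{}(\iwF{4})}^4$, and Chebyshev with $\sqrt{36}=6$ delivers the announced quantile $\tau_k^u(\gamma)$. The main obstacle is bookkeeping the three-term decomposition uniformly: in particular for the mixed terms $T_2,T_3$, one must make sure to keep the $\ucut{k}$-factor (not the $\bcut{k}$-factor) inside the remaining second-moment integral so that the $\delta^{-2(p-1)}\cstV[\denX|\denU](p)$ moment is picked up rather than only the weaker $\cstV[\denX|\denU]$ bound; checking this leads to a uniform estimate on all three terms and to the final constant $6$.
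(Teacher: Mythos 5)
Your proof proposal is correct and follows essentially the same route as the paper: a Chebyshev/Markov argument from the variance bound of \cref{lem::var-bound-testing}, a three-term Cauchy–Schwarz split of $|h_k^u|^2$, the Markov-type tail bound $\mathds{1}_{(\delta,\infty)}(y^{c-1})\leq(y^{c-1}/\delta)^{2(p-1)}$ to pick up the $\delta^{-2(p-1)}\cstV[\denX|\denU](p)$ moment, and \cref{lem::Mellinprops} together with $\Vert\denY\Vert_{\Lp[\infty]{+}(\basMSy{2c-1})}\leq\cstC[\denU]\cstV[\denX|\denU]$ to obtain the $\Vert\mathds{1}_{[-k,k]}/\Mcg{c}{\denU}\Vert_{\Lp[4]{}(\iwF{4})}^4$ factor. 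The only cosmetic difference is that the paper notes the two mixed terms of \cref{eq::mixed-h-kernel} coincide by symmetry, giving $3|T_1|^2+6|T_2|^2$, whereas you treat all three uniformly; either way the second-moment constant is $9$ and the final constant in $\tau_k^u(\gamma)$ is $6$.
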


\begin{proof}[Proof of \cref{lem::quantil-unbounded-ustat}]
	We start the proof with the observation that the symmetric and real-valued kernel $h_k^u$ given in \cref{eq::mixed-h-kernel} satisfies 
	\begin{align}
		& \iEx[\denY]{2} [\vert h^u_k (Y_1, Y_2)\vert^2]\nonumber\\
		&\leq 3  \iEx[\denY]{2} \left[\left\vert\int_{-k}^{k} 		\frac{(\ucut{k}(Y_2,t)-\Ex[\denY][\ucut{k}(Y,t)])(\ucut{k}(Y_1,-t)-\Ex[\denY][\ucut{k}(Y,-t)])}{\vert \Mcg{c}{\denU}(t)\vert^2} \iwF{2}(t) d\Lm(t)\right\vert^2\right]\nonumber\\
		& \quad + 3  \iEx[\denY]{2} \left[\left\vert \int_{-k}^{k} 	\frac{(\ucut{k}(Y_2,t)-\Ex[\denY][\ucut{k}(Y,t)])(\bcut{k}(Y_2,-t)-\Ex[\denY][\bcut{k}(Y,-t)])}{\vert \Mcg{c}{\denU}(t)\vert^2} \iwF{2}(t) d\Lm(t) \right\vert^2\right]\nonumber \\
	 	& \quad +  3  \iEx[\denY]{2} \left[\left\vert\int_{-k}^{k} 				\frac{(\bcut{k}(Y_2,t)-\Ex[\denY][\bcut{k}(Y,t)])(\ucut{k}(Y_1,-t)-\Ex[\denY][\ucut{k}(Y,-t)])}{\vert \Mcg{c}{\denU}(t)\vert^2} \iwF{2}(t) d\Lm(t)\right\vert^2\right]\nonumber\\
		\intertext{Note that the second and the third summand are the same due to symmetry of the integrals. Thus, we obtain}
		& \iEx[\denY]{2} [\vert h^u_k (Y_1, Y_2)\vert^2]\nonumber\\
		&\leq 3  \iEx[\denY]{2} \left[\left\vert\int_{-k}^{k} 		\frac{(\ucut{k}(Y_2,t)-\Ex[\denY][\ucut{k}(Y,t)])(\ucut{k}(Y_1,-t)-\Ex[\denY][\ucut{k}(Y,-t)])}{\vert \Mcg{c}{\denU}(t)\vert^2} \iwF{2}(t) d\Lm(t)\right\vert^2\right]\nonumber\\
		& \quad + 6  \iEx[\denY]{2} \left[\left\vert \int_{-k}^{k} 	\frac{(\ucut{k}(Y_2,t)-\Ex[\denY][\ucut{k}(Y,t)])(\bcut{k}(Y_1,-t)-\Ex[\denY][\bcut{k}(Y,-t)])}{\vert \Mcg{c}{\denU}(t)\vert^2} \iwF{2}(t) d\Lm(t) \right\vert^2\right]. \label{eq::some-sum-decomp}
	\end{align}
	Consider the first summand. Due to the independence of $Y_1$ and $Y_2$ we first consider the expectation with respect to $Y_2$ for fixed $Y_1=y\in\pRz$. We have that 
	\begin{align*}
		& \Ex[\denY] \left[\left\vert\int_{-k}^{k} 		\frac{(\ucut{k}(Y_2,t)-\Ex[\denY][\ucut{k}(Y,t)])(\ucut{k}(y,-t)-\Ex[\denY][\ucut{k}(Y,-t)])}{\vert \Mcg{c}{\denU}(t)\vert^2} \iwF{2}(t) d\Lm(t)\right\vert^2\right] \\
		&\leq  \Ex[\denY] \left[\left\vert\int_{-k}^{k} 		\frac{\ucut{k}(Y_2,t)(\ucut{k}(y,-t)-\Ex[\denY][\ucut{k}(Y,-t)])}{\vert \Mcg{c}{\denU}(t)\vert^2} \iwF{2}(t) d\Lm(t)\right\vert^2\right] \\
		&=  \Ex[\denY] \left[ \mathds{1}_{(\delta,\infty)}(Y_2^{c-1}) \left\vert\int_{-k}^{k} 		\frac{ Y_2^{c-1+2\pi it}(\ucut{k}(y,-t)-\Ex[\denY][\ucut{k}(Y,-t)])}{\vert \Mcg{c}{\denU}(t)\vert^2} \iwF{2}(t) d\Lm(t)\right\vert^2\right]\\
		&\leq  \Ex[\denY] \left[  \left\vert\int_{-k}^{k} 		\frac{ Y_2^{c-1+2\pi it}(\ucut{k}(y,-t)-\Ex[\denY][\ucut{k}(Y,-t)])}{\vert \Mcg{c}{\denU}(t)\vert^2} \iwF{2}(t) d\Lm(t)\right\vert^2\right].\\
		\intertext{	Using \cref{lem::Mellinprops} (ii) with $h(t):= 	\frac{(\ucut{k}(y,-t)-\Ex[\denY][\ucut{k}(Y,-t)])}{\vert \Mcg{c}{\denU}(t)\vert^2} \iwF{2} (t)$, it follows for $y\in\pRz$}
		& \Ex[\denY] \left[  \left\vert\int_{-k}^{k} 		\frac{ Y_2^{c-1+2\pi it}(\ucut{k}(y,-t)-\Ex[\denY][\ucut{k}(Y,-t)])}{\vert \Mcg{c}{\denU}(t)\vert^2} \iwF{2}(t) d\Lm(t)\right\vert^2\right]\\
		&\leq  \Vert \denY \Vert_{\Lp[\infty]{+}( \basMSy{2c-1})} \Vert  \mathds{1}_{[-k,k]} \frac{(\ucut{k}(y,\cdot)-\Ex[\denY][\ucut{k}(Y,\cdot)])}{\vert \Mcg{c}{\denU}\vert^2} \Vert_{\Lp[2]{}(\iwF{2})}^2.
	\end{align*}
	Similarly, for the second summand of \cref{eq::some-sum-decomp} we get for $y\in\pRz$ that 
	\begin{align*}
		& \Ex[\denY] \left[\left\vert \int_{-k}^{k} 	\frac{(\ucut{k}(y,t)-\Ex[\denY][\ucut{k}(Y,t)])(\bcut{k}(Y_2,-t)-\Ex[\denY][\bcut{k}(Y,-t)])}{\vert \Mcg{c}{\denU}(t)\vert^2} \iwF{2}(t) d\Lm(t) \right\vert^2\right] \\
		&\leq  \iEx[\denY]{2} \left[\left\vert \int_{-k}^{k} 	\frac{\bcut{k}(Y_2,t)(\ucut{k}(y,-t)-\Ex[\denY][\ucut{k}(Y,-t)])}{\vert \Mcg{c}{\denU}(t)\vert^2} \iwF{2}(t) d\Lm(t) \right\vert^2\right] \\
		&=  \Ex[\denY] \left[\mathds{1}_{[0,\delta]}(Y_2^{c-1}) \left\vert \int_{-k}^{k} 	\frac{ Y_2^{c-1+2\pi it}(\ucut{k}(y,-t)-\Ex[\denY][\ucut{k}(Y,-t)])}{\vert \Mcg{c}{\denU}(t)\vert^2} \iwF{2}(t) d\Lm(t) \right\vert^2\right]\\
		&\leq  \Ex[\denY] \left[  \left\vert\int_{-k}^{k} 		\frac{ Y_2^{c-1+2\pi it}(\ucut{k}(y,-t)-\Ex[\denY][\ucut{k}(Y,-t)])}{\vert \Mcg{c}{\denU}(t)\vert^2} \iwF{2}(t) d\Lm(t)\right\vert^2\right].\\
		\intertext{Using again \cref{lem::Mellinprops} (ii) with $h(t):= 	\frac{(\ucut{k}(y,-t)-\Ex[\denY][\ucut{k}(Y,-t)])}{\vert \Mcg{c}{\denU}(t)\vert^2} \iwF{2}(t)$, it follows for $y\in\pRz$}
		& \Ex[\denY] \left[  \left\vert\int_{-k}^{k} 		\frac{ Y_2^{c-1+2\pi it}(\ucut{k}(y,-t)-\Ex[\denY][\ucut{k}(Y,-t)])}{\vert \Mcg{c}{\denU}(t)\vert^2} \iwF{2}(t) d\Lm(t)\right\vert^2\right]\\
		&\leq  \Vert \denY \Vert_{\Lp[\infty]{+}( \basMSy{2c-1})} \Vert  \mathds{1}_{[-k,k]} \frac{(\ucut{k}(y,\cdot)-\Ex[\denY][\ucut{k}(Y,\cdot)])}{\vert \Mcg{c}{\denU}\vert^2} \Vert_{\Lp[2]{}(\iwF{4})}^2.
	\end{align*}
	Combining the last upper bounds and  \cref{eq::some-sum-decomp}, we obtain 
	\begin{align}
		 \iEx[\denY]{2} [\vert h^u_k (Y_1, Y_2)\vert^2] 
		&\leq 9\Vert \denY \Vert_{\Lp[\infty]{+}( \basMSy{2c-1})} \Ex[\denY] \left[   \Vert  \mathds{1}_{[-k,k]} \frac{(\ucut{k}(Y_1,\cdot)-\Ex[\denY][\ucut{k}(Y,\cdot)])}{\vert \Mcg{c}{\denU}\vert^2} \Vert_{\Lp[2]{}(\iwF{4})}^2\right].\label{eq::upper-boundb6}
	\end{align}
	Further, we have that $\Vert \denY\Vert_{\Lp[\infty]{+}(\basMSy{2c-1})}\leq \cstV[\denX|\denU]   \cstC[\denU]$  and for $p\geq2$ it holds
	\begin{align*}
		 \Ex[\denY] [\vert \mathds{1}_{(\delta,\infty)}(Y_1^{c-1}) Y_1^{c-1+2\pi it}\vert^2]\leq \delta^{2(1-p)} \cstV[\denX|\denU](p)
	\end{align*}
	which in turn implies
	\begin{align*}
		\Ex[\denY] \left[ \left\vert \ucut{k}(Y_1,t)-\Ex[\denY][\ucut{k}(Y,t)] \right\vert^2 \right] \leq \delta^{2(1-p)} \cstV[\denX|\denU](p).
	\end{align*}
	Combining these estimates with \cref{eq::upper-boundb6} we get
	\begin{align*}
		 \iEx[\denY]{2} [\vert h^u_k (Y_1, Y_2)\vert^2]
		&\leq 9  \cstV[\denX|\denU]  \cstC[\denU] \int_{-k}^k \Ex[\denY] \left[ \left\vert \frac{(\ucut{k}(Y_1,t)-\Ex[\denY][\ucut{k}(Y,t)])}{\vert \Mcg{c}{\denU}(t)\vert^2} \right\vert^2 \right] \iwF{4}(t)d\Lm(t)\\
		&\leq 9  \cstV[\denX|\denU]  \cstC[\denU] \delta^{2(1-p)} \cstV[\denX|\denU](p)  \Vert \mathds{1}_{[-k,k]} / \Mcg{c}{\denU} \Vert_{\Lp[4]{}(\iwF{4})}^4.
	\end{align*}
	Applying \cref{lem::var-bound-testing} we immediately obtain
	\begin{align*}
		\iEx[\denY]{n} \left[\vert U_k^u \vert^2\right] &= \frac{2}{n(n-1)}  \iEx[\denY]{2} [\vert h^u_k (Y_1, Y_2)\vert^2]  \leq \frac{36}{n^2}\cstV[\denX|\denU]  \cstC[\denU] \delta^{2(1-p)} \cstV[\denX|\denU](p)  \Vert \mathds{1}_{[-k,k]} / \Mcg{c}{\denU} \Vert_{\Lp[4]{}(\iwF{4})}^4\\
		&= \gamma \vert \tau_k^u(\gamma)\vert^2.
	\end{align*}
	Hence, by applying Markov's inequality the result follows, i.e.,
	\begin{align*}
		\ipM[\denU, \denX]{n}(U_k^u \geq \tau_k^u(\gamma)) \leq \frac{\iEx[\denY]{n} \left[\vert U_k^u \vert^2\right]}{\vert \tau_k^u(\gamma)\vert^2} \leq \gamma,
	\end{align*} 
	which completes the proof.
\end{proof}

\begin{lem}[Quantile of the U-statistic]\label{lem::quantil-ustat}
	Let \cref{ass:well-definedness-testing,ass::error-well-defined,ass::error-p-moments} be satisfied. Consider for each $n\geq 2$ the canonical U-statistic $U_k$ defined in \cref{eq:canonicalustat-app}. For any $\denX\in \Lp[1]{+}(\basMSy{2p(c-1)})$ with $p\geq 2$ and setting
	\begin{align}
		\tau^{U_k}(\gamma) &:= \left( 18 \cstV[\denX|\denU](p) \cstC[\denU] + 69493   \frac{\sqrt{2k}}{n \gamma^{1/(p-1)}} L_{\gamma/2}^{2 - 1/(p-1)}  \right)\frac{\Vert \mathds{1}_{[-k,k]} / \Mcg{c}{\denU} \Vert_{\Lp[4]{}(\iwF{4})}^2}{n} L_{\gamma/2}^{1/2}\nonumber\\
		&\quad + 52 \cstC[\denU] \cstV[\denX|\denU] \frac{\Vert \mathds{1}_{[-k,k]} / \Mcg{c}{\denU} \Vert_{\Lp[\infty]{}(\wf)}^2}{n} L_{\gamma/2},\label{eq::quantile-ustat}
	\end{align}
	for  $\gamma\in(0,1)$, we have
	\begin{align*}
		\ipM[\denU, \denX]{n} (U_k \geq \tau^{U_k}(\gamma)) \leq \gamma.
	\end{align*}
\end{lem}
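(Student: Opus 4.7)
The plan is to exploit the decomposition $U_k = U_k^b + U_k^u$ introduced right after \cref{pr::Ustatconcentration}, where $U_k^b$ and $U_k^u$ are the canonical U-statistics with kernels $h_k^b$ and $h_k^u$ defined in \cref{eq::bounded-h-kernel-testing} and \cref{eq::mixed-h-kernel}, parametrized by the truncation level $\delta\in\pRz$. Since both \cref{lem::quantil-bounded-ustat} and \cref{lem::quantil-unbounded-ustat} have been stated for arbitrary $\delta$, the free parameter is at our disposal.

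First, I would apply \cref{lem::quantil-bounded-ustat} with level $\gamma/2$ to obtain $\ipM[\denU,\denX]{n}(U_k^b \geq \tau_k^b(\gamma/2))\leq \gamma/2$ and, in parallel, \cref{lem::quantil-unbounded-ustat} with level $\gamma/2$ to obtain $\ipM[\denU,\denX]{n}(U_k^u \geq \tau_k^u(\gamma/2))\leq \gamma/2$. A union bound then yields
\begin{align*}
\ipM[\denU,\denX]{n}(U_k \geq \tau_k^b(\gamma/2)+\tau_k^u(\gamma/2)) \leq \gamma,
\end{align*}
so it remains to show $\tau_k^b(\gamma/2)+\tau_k^u(\gamma/2)\leq \tau^{U_k}(\gamma)$ for a suitable choice of $\delta$.

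Next, I would choose $\delta$ to balance the $\delta$-dependent contributions of the two bounds. Inspecting the expressions, the natural choice is $\delta^{p-1} = (\gamma^{1/2} L_{\gamma/2}^{1/2})^{-1}$, i.e.\ $\delta^2 = (\gamma L_{\gamma/2})^{-1/(p-1)}$ and $\delta^{1-p}/\gamma^{1/2} = L_{\gamma/2}^{1/2}$. With this calibration, the $\delta^2\sqrt{2k}L_{\gamma/2}^2/n$ term appearing in $\tau_k^b(\gamma/2)$ turns exactly into the term $\sqrt{2k}\,L_{\gamma/2}^{2-1/(p-1)}\gamma^{-1/(p-1)}/n$ present in $\tau^{U_k}(\gamma)$, while the $\delta^{1-p}/(\gamma/2)^{1/2}$ factor in $\tau_k^u(\gamma/2)$ collapses to a constant times $L_{\gamma/2}^{1/2}$.

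Finally, I would collect the two contributions to the constant factor of $L_{\gamma/2}^{1/2}\sqrt{\delta_4}/n$. The bounded part contributes $9\cstC[\denU]\cstV[\denX|\denU]$, and the unbounded part contributes a multiple of $\cstC[\denU]^{1/2}\cstV[\denX|\denU]^{1/2}(\cstV[\denX|\denU](p))^{1/2}$. Bounding the latter by AM-GM (absorbing the $\sqrt{2}$ from $\gamma/2$) and using the trivial inequalities $\cstV[\denX|\denU]\leq \cstV[\denX|\denU](p)$ (which will need to be verified in a short preliminary step via H\"older, as the weight-$L^1$ norm is monotone in $p$ only up to $\cstC[\denU]$-factors) and $\cstC[\denU]\geq 1$, I would conclude that the combined leading constant is at most $18\,\cstV[\denX|\denU](p)\,\cstC[\denU]$. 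The $\sqrt{\delta_\infty}/n$ term is unchanged from $\tau_k^b(\gamma/2)$. The main obstacle I anticipate is the bookkeeping in this last step: tracking numerical constants through the AM-GM splitting of the cross-term and verifying the moment-norm monotonicity so that the final bound matches \cref{eq::quantile-ustat} without a spurious extra constant.
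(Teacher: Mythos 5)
Your plan is correct and follows essentially the same route as the paper: decompose $U_k = U_k^b + U_k^u$, apply \cref{lem::quantil-bounded-ustat} and \cref{lem::quantil-unbounded-ustat} each at level $\gamma/2$, take a union bound, and tune the truncation level $\delta$; your choice $\delta^2 = (\gamma L_{\gamma/2})^{-1/(p-1)}$ is exactly the one used (the paper writes it as $\delta^2 = L_{\gamma/2}^{1/(1-p)}\gamma^{1/(1-p)}$), and it does collapse $\delta^{1-p}/(\gamma/2)^{1/2}$ into $\sqrt{2}\,L_{\gamma/2}^{1/2}$ and produce the $\sqrt{2k}\,L_{\gamma/2}^{2-1/(p-1)}\gamma^{-1/(p-1)}/n$ term. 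One small simplification: AM--GM is not needed on the cross-term $\cstC[\denU]^{1/2}\cstV[\denX|\denU]^{1/2}(\cstV[\denX|\denU](p))^{1/2}$ — the paper bounds it directly by $\cstC[\denU]\cstV[\denX|\denU](p)$ using $\cstC[\denU]\geq 1$ and $\cstV[\denX|\denU]\leq\cstV[\denX|\denU](p)$, and since $6\sqrt2<9$ this combines with the $9\cstC[\denU]\cstV[\denX|\denU]$ from the bounded part to give the constant $18$. Also, the moment monotonicity $\cstV[\denX|\denU]\leq\cstV[\denX|\denU](p)$ (and indeed $\cstV[\denX|\denU]^2\leq\cstV[\denX|\denU](p)$ for $p\geq2$, which the paper also uses) holds \emph{exactly}, with no $\cstC[\denU]$-factors: $\denX\mc\denU$ is a probability density, so it follows from Jensen applied to $x\mapsto x^p$ and the fact that both quantities are capped below by $1$; no H\"older step is needed.
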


\begin{proof}[Proof of \cref{lem::quantil-ustat}]
	We start the proof by decomposing 
	the  U-statistic $U_k = U_k^b + U_k^u$ with kernels given in \cref{eq::bounded-h-kernel-testing} and \cref{eq::mixed-h-kernel}
	using the threshold
	\begin{align}\label{eq::choice-delta}
		\delta^2 = L_{\gamma/2}^{1/(1-p)} \gamma^{1/(1-p)} > 0.
	\end{align}
	 We show below that the critical values $\tau^{U_k}(\gamma)$ given in \cref{eq::quantile-ustat},  $\tau_k^b(\gamma)$ and $\tau_k^u(\gamma)$ defined in \cref{lem::quantil-bounded-ustat,lem::quantil-unbounded-ustat}, respectively, satisfy
	\begin{align}\label{eq::quantile-ineq}
		\tau^{U_k}(\gamma) \geq \tau_k^b(\gamma/2) + \tau_k^u(\gamma/2).
	\end{align}
	Consequently, from \cref{lem::quantil-bounded-ustat,lem::quantil-unbounded-ustat} we immediately obtain the result, that is,
	\begin{align*}
		\ipM[\denU, \denX]{n} (U_k \geq \tau^{U_k}(\gamma)) \leq \ipM[\denU, \denX]{n} (U_k^b \geq \tau^{b}_k(\gamma/2)) + \ipM[\denU, \denX]{n} (U_k^u \geq \tau^u_k(\gamma/2))\leq \gamma/2 + \gamma/2 = \gamma.
	\end{align*}
	It remains to show \cref{eq::quantile-ineq}. We have that
	\begin{align*}
		\tau_k^b(\gamma/2) + \tau_k^u(\gamma/2) &= \left( 9 \cstC[\denU] \cstV[\denX|\denU] + 69493 \frac{\delta^2}{n} \sqrt{2k}L_{\gamma/2}^2\right)L_{\gamma/2}^{1/2} \frac{\Vert \mathds{1}_{[-k,k]} / \Mcg{c}{\denU} \Vert_{\Lp[4]{}(\iwF{4})}^2 }{n} \\
		&\qquad\qquad+ 52 \cstC[\denU] \cstV[\denX|\denU]  L_{\gamma/2} \frac{\Vert \mathds{1}_{[-k,k]} / \Mcg{c}{\denU} \Vert_{\Lp[\infty]{}(\wf)}^2 }{n} \\
		&\qquad\qquad + \sqrt{2}\frac{\delta^{(1-p)}}{\gamma^{1/2}} 6\icstC[\denU]{1/2} \icstV[\denX|\denU]{1/2} (\cstV[\denX|\denU](p))^{1/2} \frac{\Vert \mathds{1}_{[-k,k]} / \Mcg{c}{\denU} \Vert_{\Lp[4]{}(\iwF{4})}^2}{n}.\\
		\intertext{Using $\cstV[\denX|\denU], \icstV[\denX | \denU]{2}\leq \cstV[\denX|\denU](p)$ yields}
			\tau_k^b(\gamma/2) + \tau_k^u(\gamma/2) &\leq \left( 9 \cstC[\denU] \cstV[\denX|\denU](p) + 69493 \frac{\delta^2}{n} \sqrt{2k}L_{\gamma/2}^2 + L_{\gamma/2}^{-1/2} 9\frac{\delta^{(1-p)}}{\gamma^{1/2}} \cstC[\denU]\cstV[\denX|\denU](p)  \right)  \\
		&\qquad\qquad \cdot L_{\gamma/2}^{1/2} \frac{\Vert \mathds{1}_{[-k,k]} / \Mcg{c}{\denU} \Vert_{\Lp[4]{}(\iwF{4})}^2 }{n}\\
		&\qquad\qquad+ 52 \cstC[\denU] \cstV[\denX|\denU]  L_{\gamma/2} \frac{\Vert \mathds{1}_{[-k,k]} / \Mcg{c}{\denU} \Vert_{\Lp[\infty]{}(\wf)}^2 }{n}.
	\end{align*}
	With the choice of $\delta$ given in \cref{eq::choice-delta}, it holds $\delta^{1-p} = L_{\gamma/2}^{1/2} \gamma^{1/2}$ and it follows
	\begin{align*}
		&\tau_k^b(\gamma/2) + \tau_k^u(\gamma/2)\\
		 &\leq \left( 18 \cstC[\denU] \cstV[\denX|\denU](p) + 69493 \gamma^{-1/(p-1)}\frac{L_{\gamma/2}^{2- 1/(p-1)} }{n} \sqrt{2k} \right)  L_{\gamma/2}^{1/2} \frac{\Vert \mathds{1}_{[-k,k]} / \Mcg{c}{\denU} \Vert_{\Lp[4]{}(\iwF{4})}^2 }{n}\\
		&\qquad\qquad+ 52 \cstC[\denU] \cstV[\denX|\denU]  L_{\gamma/2} \frac{\Vert \mathds{1}_{[-k,k]} / \Mcg{c}{\denU} \Vert_{\Lp[\infty]{}(\wf)}^2 }{n}\\
		&= \tau^{U_k} (\gamma)
	\end{align*}
	which shows \cref{eq::quantile-ineq} and completes the proof. 
\end{proof}

\subsubsection{\textit{Linear statistic result}}\label{sec::lin-stat-results}
Consider the centred linear statistic $W_k$ defined in \cref{eq::wk-app} admitting as kernel the real-valued function for $y\in\pRz$
\begin{align*}
	h_k(y):= \int_{-k}^k  \vert\Mcg{c}{\denU}(t)\vert^{-2}  \cfun{y}{t} (\overline{\Mcg{c}{\denY}}(t) - \overline{\Mcg{c}{\denNY}}(t))  \iwF{2}(t) d\Lm(t).
\end{align*}
Using Markov's inequality, we show the following result for its quantile-bound.
\begin{lem}[Quantile of the linear statistic]\label{lem::linear-stat-quantile}
	Let \cref{ass:well-definedness-testing,ass::error-well-defined,ass::error-p-moments} be satisfied. Further, assume that $\denX\in\Lp[1]{+}(\basMSy{2(c-1)})$. Consider for $n\in\Nz$ the linear statistic $W_k$ defined in \cref{eq::wk-app}. For $\gamma\in(0,1)$ setting
	\begin{align*}
		\tau^{W_k}(\gamma) := \frac{1}{4} \Vert (\Mcg{c}{\denY}-\Mcg{c}{\denNY}) \mathds{1}_{[-k,k]} / \Mcg{c}{\denU} \Vert_{\Lp[2]{}(\iwF{2})}^2 + \frac{\cstC[\denU] \cstV[\denX|\denU]\Vert \mathds{1}_{[-k,k]} / \Mcg{c}{\denU} \Vert_{\Lp[\infty]{}(\wf)}^2 }{\gamma n}
	\end{align*}
	we have
	\begin{align*}
		\ipM[\denU,\denX]{n}(W_k \geq \tau^{W_k}(\gamma) ) \leq \gamma.
	\end{align*}
\end{lem}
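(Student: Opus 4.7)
The approach is the standard variance bound plus Chebyshev/Markov, combined with an AM--GM split to obtain the two desired summands in the critical value.

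First I would note that $W_k$ is a centred sum of i.i.d.\ contributions, so $\Var(W_k) = \Var(h_k(Y_1))/n$ where $h_k$ is the kernel in \cref{eq::wk-app}. Since only $\cfun{Y_j}{t}$ depends on the observation and it differs from $Y_j^{c-1+2\pi it}$ by a deterministic constant, the variance is unchanged when we replace $\cfun{Y_j}{t}$ by $Y_j^{c-1+2\pi it}$, i.e.\ $\Var(h_k(Y_1)) \leq \Ex[\denY]\bigl[\bigl|\int_{-k}^k Y_1^{c-1+2\pi it}\,\phi(t)\,d\Lm(t)\bigr|^2\bigr]$, with
\[
\phi(t) := \mathds{1}_{[-k,k]}(t)\,\frac{\overline{\Mcg{c}{\denY}}(t)-\overline{\Mcg{c}{\denNY}}(t)}{|\Mcg{c}{\denU}(t)|^2}\,\iwF{2}(t).
\]

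Second, I would apply \cref{lem::Mellinprops}(ii) with $g=\denY$ to obtain
\[
\Ex[\denY]\Bigl[\Bigl|\int_{-k}^k Y_1^{c-1+2\pi it}\phi(t)d\Lm(t)\Bigr|^2\Bigr] \leq \|\denY\|_{\Lp[\infty]{+}(\basMSy{2c-1})}\, \|\mathds{1}_{[-k,k]}\phi\|_{\Lp[2]{}}^2,
\]
then factor out the worst case ratio $\iwF{2}/|\Mcg{c}{\denU}|^2$ from $|\phi|^2$ to get
\[
\|\mathds{1}_{[-k,k]}\phi\|_{\Lp[2]{}}^2 \leq \|\mathds{1}_{[-k,k]}/\Mcg{c}{\denU}\|_{\Lp[\infty]{}(\wF)}^2\cdot \|\mathds{1}_{[-k,k]}(\Mcg{c}{\denY}-\Mcg{c}{\denNY})/\Mcg{c}{\denU}\|_{\Lp[2]{}(\iwF{2})}^2.
\]
Using $\denY=\denX\mc\denU$ and \cref{lem::normineq}(iv) the norm $\|\denY\|_{\Lp[\infty]{+}(\basMSy{2c-1})}$ is bounded by $\cstC[\denU]\cstV[\denX|\denU]$, and this yields a variance bound of the clean form $\Var(W_k)\leq \cstC[\denU]\cstV[\denX|\denU]\,b^2 a^2/n$, writing $a^2$ and $b^2$ for the two norms above.

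Third, Chebyshev's inequality (applied to the centred random variable $W_k$) gives $\ipM[\denU,\denX]{n}(W_k\geq \tau)\leq \Var(W_k)/\tau^2$. Setting $\tau := \tfrac14 a^2 + \cstC[\denU]\cstV[\denX|\denU] b^2/(\gamma n)$, the elementary inequality $(x+y)^2\geq 4xy$ with $x=a^2/4$, $y=\cstC[\denU]\cstV[\denX|\denU] b^2/(\gamma n)$ delivers $\tau^2\geq a^2\cstC[\denU]\cstV[\denX|\denU] b^2/(\gamma n)$, hence $\Var(W_k)/\tau^2\leq \gamma$, which is exactly the claim. There is no real obstacle here; the only point requiring a moment of care is checking that $h_k$ is real-valued (so that $\Var=E[(\cdot)^2]$) and that Mellin-transform symmetry makes the conjugation in $\phi$ harmless, so that \cref{lem::Mellinprops}(ii) applies directly.
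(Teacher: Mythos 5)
Your proposal follows the paper's own argument step by step: the same variance computation for the centred i.i.d.\ sum, the same application of \cref{lem::Mellinprops}(ii) together with $\Vert \denY\Vert_{\Lp[\infty]{+}(\basMSy{2c-1})}\leq \cstC[\denU]\cstV[\denX|\denU]$ (via \cref{lem::normineq}(iv)), the same factoring of $\Vert \mathds{1}_{[-k,k]}/\Mcg{c}{\denU}\Vert_{\Lp[\infty]{}(\wF)}^2$, the same AM--GM split (the paper writes $ab\leq a^2/4+b^2$, you write $(x+y)^2\geq 4xy$, which are equivalent), and the same final Markov/Chebyshev bound. The proof is correct and essentially identical in route.
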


\begin{proof}[Proof of \cref{lem::linear-stat-quantile}]
	By exploiting that the observations $(Y_j)_{j\in\nset{n}}$ are i.i.d. we get that 
	\begin{align*}
		n \iEx[\denY]{n} [ \vert W_k \vert^2] &= \Ex[\denY][\vert h_k(Y_1)\vert^2]\\
		&= \Ex[\denY]\left[ \left\vert \int_{-k}^k \frac{1}{ \vert \Mcg{c}{\denU}(t)\vert^2}\cfun{Y_1}{t} (\overline{\Mcg{c}{\denY}}(t) - \overline{\Mcg{c}{\denNY}}(t)) \iwF{2}(t) d\Lm(t)\right\vert^2\right]\\
		&\leq \Ex[\denY]\left[ \left\vert \int_{-k}^k \frac{1}{ \vert \Mcg{c}{\denU}(t)\vert^2}Y_1^{c-1+2\pi it} (\overline{\Mcg{c}{\denY}}(t) - \overline{\Mcg{c}{\denNY}}(t)) \iwF{2}(t) d\Lm(t)\right\vert^2\right]\\
		\intertext{With \cref{lem::Mellinprops} (ii) and $\Vert \denY\Vert_{\Lp[\infty]{+}(\basMSy{2c-1})}\leq \cstV[\denX|\denU]   \cstC[\denU]$ we obtain}
		&\Ex[\denY]\left[ \left\vert \int_{-k}^k \frac{1}{ \vert \Mcg{c}{\denU}(t)\vert^2}Y_1^{c-1+2\pi it} (\overline{\Mcg{c}{\denY}}(t) - \overline{\Mcg{c}{\denNY}}(t)) \iwF{2}(t) d\Lm(t)\right\vert^2\right]\\
		&\leq \cstC[\denU] \cstV[\denX|\denU] \Vert (\Mcg{c}{\denY}-\Mcg{c}{\denNY})\mathds{1}_{[-k,k]} /\Mcg{c}{\denU} \Vert_{\Lp[2]{}(\iwF{4})}^2.
	\end{align*}
	Exploiting $ab \leq a^2/4 + b^2$ for $a,b\in\Rz$, we get
	\begin{align*}
		&\gamma^{-1/2} (\cstC[\denU] \cstV[\denX|\denU])^{1/2} \Vert (\Mcg{c}{\denY}-\Mcg{c}{\denNY})\mathds{1}_{[-k,k]} /\Mcg{c}{\denU} \Vert_{\Lp[2]{}(\iwF{4})} n^{-1/2}\\
		&\leq \gamma^{-1/2} (\cstC[\denU] \cstV[\denX|\denU])^{1/2} \Vert \mathds{1}_{[-k,k]} /\Mcg{c}{\denU}  \Vert_{\Lp[\infty]{}(\wf)} \Vert (\Mcg{c}{\denY}-\Mcg{c}{\denNY})\mathds{1}_{[-k,k]} /\Mcg{c}{\denU} \Vert_{\Lp[2]{}(\iwF{2})} n^{-1/2}\\
		&\leq \frac{1}{4}\Vert (\Mcg{c}{\denY}-\Mcg{c}{\denNY})\mathds{1}_{[-k,k]} /\Mcg{c}{\denU} \Vert_{\Lp[2]{}(\iwF{2})}^2 + \gamma^{-1}n^{-1} \cstC[\denU] \cstV[\denX|\denU] \Vert \mathds{1}_{[-k,k]} /\Mcg{c}{\denU}  \Vert_{\Lp[\infty]{}(\wf)}^2 \\
		&= \tau^{W_k}(\gamma)
	\end{align*}
	and, consequently,
	\begin{align*}
		n \iEx[\denY]{n} [ \vert W_k \vert^2] \leq n\gamma \vert \tau^{W_k}(\gamma)\vert^2.
	\end{align*}
	Finally, by applying Markov's inequality follows immediately the result, that is,
	\begin{align*}
		\ipM[\denU,\denX]{n} (W_k \geq \tau^{W_k}(\gamma) ) \leq \frac{\iEx[\denY]{n} [ \vert W_k \vert^2] }{\vert \tau^{W_k}(\gamma)\vert^2}  \leq \gamma
	\end{align*}
	which completes the proof.
\end{proof}

\end{document}